\def\draft{n}
\newcommand{\sbullet}{%
  \hbox{\fontfamily{lmr}\fontsize{.4\dimexpr(\f@size pt)}{0}\selectfont\textbullet}}
\def\printname#1{
        \if\draft y
                \smash{\makebox[0pt]{\hspace{-0.5in}
                        \raisebox{8pt}{\tt\tiny #1}}}
        \fi
}
\def\printname#1{
        \if\draft y
                \smash{\makebox[0pt]{\hspace{-0.5in}
                        \raisebox{8pt}{\tt\tiny #1}}}
        \fi
}
\newlength{\standardunitlength}
\long\def\@makecaption#1#2{%
     \vskip 10pt

\setbox\@tempboxa\hbox{
       \small\sf{\bfcaptionfont #1. }\ignorespaces #2}%
     \ifdim \wd\@tempboxa >\captionwidth {%
         \rightskip=\@captionmargin\leftskip=\@captionmargin
         \unhbox\@tempboxa\par}%
       \else
         \hbox to\hsize{\hfil\box\@tempboxa\hfil}%
     \fi}
\font\bfcaptionfont=cmssbx10 scaled \magstephalf
\newdimen\@captionmargin\@captionmargin=2\parindent
\newdimen\captionwidth\captionwidth=\hsize
\def\lbl#1{\label{#1}\printname{#1}}
                        \theoremstyle{plain}
\newtheorem{theorem}{Theorem}[section]
\newtheorem{thm}{Theorem}
\newtheorem{lemma}[theorem]{Lemma}
\newtheorem{corollary}[theorem]{Corollary}
\newtheorem{proposition}[theorem]{Proposition}
\theoremstyle{definition}
\newtheorem{remark}[theorem]{Remark}
\def\BC{\mathbb C}
\def\BN{\mathbb N}
\def\BZ{\mathbb Z}
\def\BT{\mathbb T}
\def\BQ{\mathbb Q}
\def\CP{\mathcal P}
\def\la{\langle}
\def\ra{\rangle}
\DeclareMathOperator{\tr}{\mathrm tr}
\def\al{\alpha}
\def\ve{\varepsilon}
\def\be { \begin{equation} }
\def\ee { \end{equation} }
\def\bD{{\bar \Delta }}
\def\bs{{\widetilde {\mathfrak s}}}
\newcommand\no[1]{}
\def\hD{{\hat D}}
\def\hP{\hat \CP}
\def\hS{{\hat \CS}}
\def\bT{\mathbb T}
                  \def\uu{\mathfrak u}
      \def\nc{\newcommand}
                  \nc\FI[2]{\begin{figure}
    \begin{center}\input{#1.pstex_t}\end{center}
    \caption{#2}
    \lbl{#1}
  \end{figure}}
\nc\FIG[3]{\begin{figure}
    \includegraphics[#3]{#1.eps}
    \caption{#2}
    \lbl{fig:#1}
    \end{figure}}
\nc\FF[3]{\begin{figure}
    \includegraphics[#3]{#1.eps}
    \caption{#2}
    \lbl{#1}
    \end{figure}}
    \nc\FIGc[3]{\begin{figure}[htpb]
    \includegraphics[height=#3]{#1.eps}
    \caption{#2}
    \lbl{fig:#1}
    \end{figure}}
    \nc\FIGh[3]{\begin{figure}[htpb]
    \includegraphics[height=#3]{draws/#1.eps}
    \caption{#2}
    \lbl{fig:#1}
    \end{figure}}
\newcommand*{\lembed}{\ensuremath{\lhook\joinrel\relbar\joinrel\rightarrow}}
\newcommand*{\rembed}{\ensuremath{\leftarrow\joinrel\relbar\joinrel\rhook}}
\def\id{{\mathrm{id}}}
\def\tY{\tilde{\cY}}
\def\rk{\mathrm{rk}}
\def\cS{\mathscr S}
\def\ot{\otimes}
\def\oS{\overset \circ {\cS}}
\def\tT{\tilde \bT}
\def\cE{\mathcal E}
\def\cF{\mathcal F}
\def\Mat{\mathrm{Mat}}
\def\bk{\mathbf k}
\def\bn{\mathbf n}
\def\oD{{\mathring\Delta}}
\def\oQ{{\mathring Q}}
\def\cP{\mathcal P}
\def\bV{\mathbb V}
\def\Id{\mathrm{Id}}
\def\fS{\mathfrak S}
\def\D{\Delta}
\def\oL{\overset \circ \Lambda}
\def\QQ{K}
\def\cY{\mathcal Y}
\def\ev{{\mathrm{bl}}}
\def\embed{\hookrightarrow}
\def\sX{\mathfrak X}
\def\oL{\overset \circ \Lambda}
\def\cT{\mathcal T}
\def\cY{\mathcal Y}
\def\bQ{{\overline Q}}
\def\ev{{\mathrm{bl}}}
\def\ww{w}
\def\tw{\tilde{\ww}}
\def\Col{\mathrm{Col}}
\def\oL{\overset \circ \Lambda}
\def\oA{\overset \circ A}
\def\embed{\hookrightarrow}
\def\vp{\varphi}
\def\vk{\varkappa}
\def\beq{\overset \bullet =}
\def\oS{\mathring \Sigma}
\def\SM{(\Sigma,\cP)}
\def\pS{\partial \Sigma}
\def\ooS{\mathring {\cS}}
\def\rprod{\operatorname{  \overrightarrow{\prod}  }  }
\def\com{\mathcal C}
\def\cA{{\mathcal A}}
\def\cR{{\mathcal R}}
\def\sS{{\mathfrak S}}
\def\bsS{\bar{\sS}}
\def\La{\Lambda}
\def\oL{\mathring \Lambda}
\def\fM{\mathfrak M}
\def\cI{\mathcal I}
\def\bPsi{\overline{\psi}}
\def\bvp{\bar \varphi}
\def\bX{\bar{\sX}}
\def\bXD{\bX(\D)}
\def\bvpD{{\bar{\vp}_\D}}
\def\bvk{\bar\varkappa}
\def\hYeD{\tilde \cY^\ev(\D)}
\def\hYeDp{\tilde \cY^\ev(\D')}
\def\tYtD{\tY^{(2)}(\D)}
\def\hXD{\tilde\sX(\D)}
\def\hXDp{\tilde\sX(\D')}
\def\YeD{\cY^\ev(\D)}
\def\cZ{\mathcal Z}
\def\ZeL{\cY^\ev(\La)}
\def\Rp{R_\partial}
\def\XD{\sX(\D)}
\def\XppD{\sX_{++}(\D)}
\def\vpD{\vp_\D}
\def\XhalfD{\sX^{(\frac 12)}(\D)}
\def\XD{\sX(\D)}
\def\YD{\cY(\D)}
\def\tYD{\tilde \cY(\D)}
\def\YtD{\cY^{(2)}(\D)}
\def\tYtD{\tilde \cY^{(2)}(\D)}
\def\bve{{\boldsymbol{\ve}}}
\def\bm{{\mathbf m}}
\def\hYeD{\tilde \cY^\ev(\D)}
\def\YeD{\cY^\ev(\D)}
\def\XhD{\sX^{(\frac 12)}(\D)}
\def\ttS{\tilde{\cS}}
\def\La{{\Lambda}}
\def\tvarphi{\tilde \varphi}
\def\tXD{\tilde \sX(\D)}
\def\tpsi{\tilde \psi}
\begin{document}

\title{Quantum Teichm\"uller spaces and quantum trace map}

\author[Thang  T. Q. L\^e]{Thang  T. Q. L\^e}
\address{School of Mathematics, 686 Cherry Street,
 Georgia Tech, Atlanta, GA 30332, USA}
\email{letu@math.gatech.edu}
\dedicatory{Dedicated to Francis Bonahon on the occasion
     of his $60$th birthday}

\thanks{Supported in part by National Science Foundation. \\
2010 {\em Mathematics Classification:} Primary 57N10. Secondary 57M25.\\
{\em Key words and phrases: Kauffman bracket skein module, quantum Teichm\"uller space, quantum trace map.}}

\begin{abstract}
We show how the quantum trace map of Bonahon and Wong can be constructed in a natural way using the skein algebra of Muller, which is an extension of the Kauffman bracket skein algebra of surfaces. We also show that the quantum Teichm\"uller space of a marked surface, defined by Chekhov-Fock (and Kashaev) in an abstract way, can be realized as a concrete subalgebra of the skew field of the skein algebra.
\end{abstract}

\maketitle

\section{Introduction}

\subsection{Quantum trace map for triangulated marked surfaces} Suppose $\SM$ is {\em marked surface}, i.e. $\Sigma$ is an oriented connected compact surface with boundary $\pS$ and $\cP\subset \pS$ is  finite set of marked points.
 F. Bonahon and H. Wong \cite{BW0} constructed a remarkable injective algebra homomorphism, called the {\em quantum trace map}
\be
\lbl{eq.1m}
 \tr_q^\D:\ooS \to \YeD,
 \ee
where $\ooS$ is the Kauffman bracket skein algebra of $\Sigma$ and $\YeD$ is the square root version of the Chekhov-Fock algebra of $\SM$. We recall the definitions of $\ooS$ and $\YeD$ in Section \ref{sec:markedsur}. While the skein algebra $\ooS$ does not depend on $\cP$ nor any triangulation, the square root Chekhov-Fock algebra $\YeD$ depends on a  $\cP$-triangulation $\D$ of $\Sigma$, i.e. a triangulation whose set of vertices is~$\cP$.

The skein algebra $\ooS$ was introduced by J. Przytycki \cite{Przy} and V. Turaev \cite{Turaev} based on the Kauffman bracket \cite{Kauffman}, and is  a quantization of the $SL_2$-character variety of $\Sigma$ along the Goldman-Weil-Petersson Poisson form, see \cite{BFK,Bullock,PS,Turaev}. The Chekhov Fock algebra in our paper is  the multiplicative version  of the one originally defined by Chekhov and Fock \cite{CF}. The theory of this multiplicative version and its square root version was developed by Bonahon, Liu, and Hiatt  \cite{BL,Liu,Hiatt}.
The Chekhov-Fock algebra is a quantization of the enhanced Teichm\"uller space using Thurston's shear coordinates, also along the Goldman-Weil-Petersson Poisson form. A slightly different form of a quantization of the enhanced Teichm\"uller space using shear coordinates is also introduced by Kashaev~\cite{Kashaev}.

Based on the close relation between the $SL_2$-character variety and the Teichm\"muller space, V.~Fock \cite{Fock} and Chekhov and Fock \cite{CF2} made a conjecture that a quantum trace map as in Equation \eqref{eq.1m} exists. The conjecture was proved in special cases in \cite{CP,Hiatt} and in full generality by Bonahon and Wong \cite{BW0}. When the quantum parameter $q$ is set to 1, the quantum trace map expresses the $SL_2$-trace of a loop in terms of the shear coordinates.

Skein algebras of surfaces, or more generally skein modules of 3-manifolds, are objects which can be defined using simple geometric notions but are hard to deal with since their algebra is difficult to handle. Their geometric definition helps to relate skein algebras/modules to topological objects like the fundamental groups, the Jones polynomial, etc.  For example, understanding the skein modules of knot complements can help to prove the AJ conjecture, which relates the Jones polynomial and the fundamental group of a knot \cite{Le:Apoly,LT,LZ}, and  the skein modules are used in the construction of topological quantum field theories \cite{BHMV}.
The introduction of the quantum trace map is a breakthrough in the study of skein algebras; it embeds the skein algebra $\ooS$  into quantum tori which have simple algebraic structure. For example,  representations of $\ooS$ are studied via the quantum trace map in  \cite{BW2}.
We will use quantum trace maps to the study of skein modules of knot complements in future work.
\subsection{Quantum trace map through the Muller algebra in skein theory}
The original construction of the quantum trace map in \cite{BW0} involves difficult calculations, with miraculous identities.
One of the goals of this paper is to offer another approach to  the quantum trace map of Bonahon and Wong using Muller's extension of skein algebras.
By extending the definition of skein algebras to the class of marked surfaces \cite{Muller}, we have a natural embedding of the skein algebra $\ooS$ of $\Sigma$ into the skein algebra $\cS$ of the marked surface $\SM$.
The latter, in the presence of a $\cP$-triangulation $\D$, naturally contains the positive part $\XppD$ of a nice algebra $\XD$, called the Muller algebra, which is a quantum torus  (see Section \ref{sec:markedsur} for details). Muller showed that the inclusion $\XppD\subset \cS$ leads to a natural embedding $\varphi_\D: \ooS \embed \XD$. And we want to argue that $\varphi_\D: \ooS \embed \XD$ is the same as  the quantum trace map of Bonahon and Wong, via the quantum shear-to-skein map as follows.

The Muller algebra $\XD$  is a quantum torus, constructed based on the {\em vertex matrix} of $\SM$ (see Sections \ref{sec:qtorus} and \ref{sec:markedsur}). The algebra  $\YeD$ is a subalgebra of another  quantum torus based on the {\em face matrix} of $\SM$. Using a duality between the vertex matrix and the face matrix, we construct an embedding $\psi: \YeD \embed
\XD$. Now we have two embeddings into $\XD$:
\be
\lbl{eq.1j}
\ooS \overset{\varphi_\D} \lembed \XD \overset{\psi} \rembed \YeD.
\ee

\begin{thm} \lbl{thm.1}

In Diagram \eqref{eq.1j}, the image of $\psi$  contains the image of $\varphi_\D$. The injective algebra homomorphism  $\varkappa_\D:\ooS \to \YeD$ defined by
$\varkappa_\D:= \psi^{-1} \circ \varphi_\D$ is equal to the quantum trace map of Bonahon and Wong.
\end{thm}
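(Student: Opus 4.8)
The plan is to first establish the inclusion $\im\varphi_\D\subseteq\im\psi$, so that $\varkappa_\D=\psi^{-1}\circ\varphi_\D$ is even defined, and then to identify $\varkappa_\D$ with $\tr_q^\D$ by checking that $\varkappa_\D$ satisfies the properties that characterize the Bonahon--Wong map.

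Recall that $\psi$ is built from the duality between the face matrix and the vertex matrix of $\SM$: the edge/triangle incidence data of $\D$ supplies a linear change of coordinates relating the two integer matrices that govern the commutation relations of the two quantum tori, and lifting it gives an algebra embedding of quantum tori whose image is the ``balanced'' (even) subalgebra of $\XD$; restricting to $\YeD$ produces $\psi$. To prove $\im\varphi_\D\subseteq\im\psi$ it then suffices to evaluate $\varphi_\D$ on a spanning set of $\ooS$ --- for instance on simple closed curves, which generate $\ooS$ as an algebra --- and to check that Muller's edge-coordinate expressions for these elements satisfy the balancing congruence, hence lie in $\psi(\YeD)$. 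Since $\varphi_\D$ and $\psi$ are injective, $\varkappa_\D$ is then a well-defined injective algebra homomorphism $\ooS\to\YeD$.

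To see that $\varkappa_\D=\tr_q^\D$, I would invoke the characterization of the quantum trace map from \cite{BW0}: among algebra homomorphisms $\ooS\to\YeD$ it is the unique family, over all $\cP$-triangulated marked surfaces, that is compatible with the edge-splitting homomorphisms and takes the prescribed values on skeins contained in a single triangle of $\D$. So it is enough to verify these two properties for $\varkappa_\D$. For splitting compatibility I would combine Muller's cutting homomorphism $\cS(\Sigma)\to\cS(\Sigma_e)$ --- which restricts to the classical splitting on $\ooS$ and is compatible with the inclusions $\XppD\subset\cS$, hence with $\varphi_\D$ --- with the standard splitting homomorphism on the Chekhov--Fock side, and then check by a diagram chase of quantum tori that $\psi$ intertwines the two; this last point reduces to the purely combinatorial fact that the face/vertex duality is compatible with cutting $\D$ along an interior edge.

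The remaining step, which I expect to be the main obstacle, is to compute $\varkappa_\D$ on the skeins living inside a single triangle $T$ of $\D$ and match the answer with the Bonahon--Wong local formula. The skein algebra of $T$ is small and explicit: Muller's $\varphi_\D$ sends the edges and corner arcs of $T$ to simple monomials in the edge variables, and applying $\psi^{-1}$ rewrites them in the face (shear) variables. Checking that this agrees exactly with the local matrices of \cite{BW0} is where the bookkeeping concentrates --- one has to reconcile the normalization conventions on the two sides (powers of $q^{1/2}$, the even/square-root structure, orientation choices). It is natural to do this in two stages: first treat the case $q=1$, where the statement reduces to the classical expression of the $SL_2$-trace of a loop in Thurston's shear coordinates, and then promote it to general $q$ by a leading-term comparison inside the quantum tori $\XD$ and $\YeD$. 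Once the triangle values agree and splitting compatibility is in hand, the uniqueness part of the characterization from \cite{BW0} gives $\varkappa_\D=\tr_q^\D$ for every $\cP$-triangulation $\D$.
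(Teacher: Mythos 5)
Your strategy for part (b) is genuinely different from the paper's. The paper does not invoke the uniqueness/splitting characterization of the Bonahon--Wong map at all; instead it first proves an explicit formula (Theorem \ref{r.fc}) for $\varphi_\D(\al)$ when $\al$ is $\D$-simple, observes that the resulting expression $\sum_{C\in\Col(\al,\D)} x^{CH}$ equals $\psi\big(\sum_C y^C\big)$, matches this directly against \cite[Proposition 29]{BW2} to get $\varkappa_\D(\al)=\tr_q^\D(\al)$ on $\D$-simple knots, and then extends to all triangulation-simple knots by pairing the naturality of $\varphi_\D$ under coordinate change (Proposition \ref{r.42}) with that of $\tr_q^\D$ (diagram \eqref{eq.dia6}). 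Your route --- verify splitting compatibility and triangle values, then appeal to BW0's uniqueness --- is a legitimate alternative, but it imports substantial hidden complexity: the BW0 splitting homomorphism lives on the \emph{stated} skein side, while Muller's cutting map is on the ordinary marked skein side, and a careful intertwining of $\psi$ with both requires a translation between states and marked endpoints that you do not spell out. By contrast the paper's route replaces that entire compatibility check with a single direct computation plus the already-established $\Theta$--$\Phi$ intertwining (Theorem \ref{r.shearchange}).

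Two more concrete gaps. First, in part (a) you say ``evaluate $\varphi_\D$ on simple closed curves and check the balancing congruence,'' but only $\D$-simple curves admit a direct closed-form evaluation; for curves meeting some edge of $\D$ more than once, no such formula is available, and the paper has to route through the skew-field localization (Step 3 of the proof of Theorem \ref{r.52}), showing $\tilde\psi(\tilde\cY^{\ev}(\D))\cap\sX(\D)=\psi(\cY^{\ev}(\D))$ before it can descend from the skew fields back to the algebras. That localization step is indispensable and missing from your sketch. Also note that balancing of the exponent is not the whole condition for lying in $\psi(\YeD)$ --- one also needs the exponent to be of the form $CH$, i.e.\ in the row span of $H$; Theorem \ref{r.fc} supplies exactly this. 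Second, your plan to pin down the $q$-powers in the triangle computation by ``$q=1$ plus a leading-term comparison'' does not suffice: specialization at $q=1$ forgets all powers of $q$, and a leading-term match fixes only one term. The paper deals with precisely this issue via the reflection anti-involution (Lemma \ref{r.reflection}): both $\varphi_\D(\al)$ and the normalized monomials $x^{CH}$ are reflection-invariant, which forces every $q$-power coefficient to be $1$. Some device of that kind --- not a leading-term argument --- is needed to complete your triangle computation.
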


The map $\psi$ could be considered as a kind of Fourier transform, as it relates two types of coordinates based to matrices which are almost dual to each other. When the quantum parameter is set to 1, $\psi$ becomes the map expressing the shear coordinates in terms of Penner coordinates in the decorated Teichm\"uller space \cite{Penner}. One can show that  the Muller algebra is the exponential version of the Moyal quantization of the decorated Teichm\"uller space of the marked surface with respect to a natural linear Poisson structure.
Theorem \ref{thm.1} is proved in Section \ref{sec:shear}.

\subsection{The quantum Teichm\"uller space}   To each triangulation $\Delta$ of a marked surface $\SM$,
Chekhov and Fock defined an algebra, denoted by $\cY^{(2)}(\D)$ in this paper,  which is a subalgebra of the square root version $\YeD$ (see Section \ref{sec:shear}). To define an object not depending on triangulations, Chekhov and Fock suggested the following approach.

Being a quantum torus,  $\cY^{(2)}(\D)$ is a two-sided Ore domain and hence has a skew field $\tY^{(2)}(\D)$ (see Section \ref{sec:qtorus}). It was proved \cite{CF,Liu} that for any two triangulations $\D,\D'$ there is a natural {\em change of coordinate isomorphism} $\Theta_{\D\D'}: \tY^{(2)}(\D') \to \tY^{(2)}(\D)$.
Naturality means $\Theta_{\D\D}=\id$ and $\Theta_{\D\D''}= \Theta_{\D\D'} \circ \Theta_{\D'\D''}$ for any 3 triangulations $\D,\D',\D''$. Then one defines $\cT= \sqcup_{\D} \tY^{(2)}(\D)/\sim$, where $\sim$ is the equivalence relation defined by $a\sim b$, where $a \in \tY^{(2)}(\D)$ and $b\in \tY^{(2)}(\D') $, if $a= \Theta_{\D\D'}(b)$. The algebra $\cT$ is called the {\em quantum Teichm\"uller space of $\SM$}. This approach defines $\cT$ in an abstract way. 

\def\ttSt{\ttS^{(2)}}
Using the skein algebra $\cS$ of $\SM$, we are able to realize $\cT$ as a concrete subspace of the skew field $\ttS$ of $\cS$. First, Muller \cite{Muller} shows that the embedding $\varphi_\D: \cS \embed \XD$ extends to an isomorphism of skew fields $\tvarphi_\D: \ttS \overset\cong \longrightarrow  \tXD$. Besides, the embedding $\psi: \YtD \embed
\XD$ extends to $\tilde \psi: \tYtD \embed
\tXD$.
This leads to an embedding
$$ \tpsi_\D:=(\tvarphi_\D)^{-1} \circ \tilde \psi: \tYtD\embed \ttS.$$
\begin{thm} \lbl{thm.2}
The image $ \ttSt:= \tpsi_\D(\tY^{(2)}(\D))$ in $\ttS$ does not depend on the triangulation $\D$, and the coordinate change map $\Theta_{\D\D'}$ is equal to $(\tpsi_\D)^{-1} \circ \tpsi_{\D'}$. Here $(\tpsi_\D)^{-1}$ is defined on $\ttSt$.
\end{thm}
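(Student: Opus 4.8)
The strategy is to reduce the statement to Theorem \ref{thm.1}, which identifies $\varkappa_\D = \psi^{-1}\circ\varphi_\D$ with the Bonahon--Wong quantum trace map, together with the known naturality of the Bonahon--Wong construction under changes of triangulation. The key observation is that $\tpsi_\D$ is by definition an isomorphism of $\tYtD$ onto its image inside $\ttS$, and it is the skew-field extension of the composite $(\varphi_\D)^{-1}\circ\psi$. So the question of whether $\ttSt := \tpsi_\D(\tYtD)$ is independent of $\D$ is equivalent to asking whether, for two triangulations $\D,\D'$, the map $(\tpsi_\D)^{-1}\circ\tpsi_{\D'}:\tYtDp \to \tYtD$ is well-defined on all of $\tYtDp$ and carries $\tYtDp$ isomorphically onto $\tYtD$ --- and, once that is known, identifying this map with $\Theta_{\D\D'}$.

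First I would record the diagram of skew-field isomorphisms. Muller's result gives $\tvarphi_\D:\ttS \overset\cong\to \tXD$ and likewise $\tvarphi_{\D'}:\ttS\overset\cong\to\tXDp$; composing, we get a canonical isomorphism $\tvarphi_{\D'}\circ(\tvarphi_\D)^{-1}:\tXD\overset\cong\to\tXDp$ which is the "change of Muller coordinates" for the skew field of $\cS$. Since $\tpsi_\D = (\tvarphi_\D)^{-1}\circ\tilde\psi$ with $\tilde\psi:\tYtD\embed\tXD$, the composite $(\tpsi_\D)^{-1}\circ\tpsi_{\D'}$ is, wherever defined, equal to $\tilde\psi^{-1}\circ\big(\tvarphi_\D\circ(\tvarphi_{\D'})^{-1}\big)\circ\tilde\psi$, i.e. the Muller-coordinate change conjugated by the Fourier-type map $\tilde\psi$. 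The heart of the matter is then: this conjugated map agrees with $\Theta_{\D\D'}$. I would establish this by invoking Theorem \ref{thm.1} at the level of the base algebras. By that theorem, $\psi^{-1}\circ\varphi_\D = \varkappa_\D = \tr_q^\D$, the Bonahon--Wong quantum trace; similarly $\psi^{-1}\circ\varphi_{\D'} = \tr_q^{\D'}$. Bonahon and Wong proved that their trace maps are compatible with coordinate changes, precisely in the form $\tr_q^\D = \Theta_{\D\D'}\circ\tr_q^{\D'}$ (read inside the relevant skew fields); combining the two displays yields $\Theta_{\D\D'} = \varkappa_\D\circ\varkappa_{\D'}^{-1}$ on the image of $\tr_q^{\D'}$, and then one extends this identity from the skein algebra to its skew field by the Ore localization property, since both sides are algebra homomorphisms of skew fields agreeing on a generating subalgebra.

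The remaining step is to upgrade this from "agreement on the image of the skein algebra" to "agreement as maps $\tYtDp\to\tYtD$", and to deduce the independence of $\ttSt$ from it. Here I would use that $\YtD$ is generated as an algebra by the elements attached to edges of $\D$, hence $\tYtD$ is the skew field generated by finitely many invertible elements, and that $\Theta_{\D\D'}$ and $(\tpsi_\D)^{-1}\circ\tpsi_{\D'}$ are both defined on all of $\tYtDp$ (the former by \cite{CF,Liu}, the latter because $\tpsi_{\D'}$ lands in $\ttS$ and $\tvarphi_\D$ is an isomorphism onto $\tXD$ with $\tilde\psi^{-1}$ defined on the image of $\tilde\psi$). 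Two skew-field homomorphisms with the same domain that agree on a set of generators agree everywhere; applying this, $(\tpsi_\D)^{-1}\circ\tpsi_{\D'} = \Theta_{\D\D'}$ on $\tYtDp$. In particular $\tpsi_{\D'} = \tpsi_\D\circ\Theta_{\D\D'}$, so $\tpsi_{\D'}(\tYtDp) = \tpsi_\D(\Theta_{\D\D'}(\tYtDp)) = \tpsi_\D(\tYtD) = \ttSt$, which is exactly triangulation-independence, and the coordinate change formula is the identity we just proved.

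The step I expect to be the main obstacle is making the conjugation identity rigorous: one must check that $\tilde\psi$ really does intertwine the Muller-coordinate change isomorphism $\tvarphi_{\D'}\circ(\tvarphi_\D)^{-1}$ with $\Theta_{\D\D'}$ on the nose, not merely up to some automorphism. This requires knowing that the Bonahon--Wong naturality $\tr_q^\D = \Theta_{\D\D'}\circ\tr_q^{\D'}$ holds in the precise normalization used in Section \ref{sec:shear}, and that the image of $\tr_q^{\D'}$ (equivalently, the image of $\ooS$, or better of $\cS$, under $\tpsi_{\D'}^{-1}$) is large enough inside $\tYtDp$ to pin down a skew-field homomorphism --- i.e. that $\tYtDp$ is the Ore skew field generated by $\varkappa_{\D'}(\ooS)$ (or by the edge variables, which lie in the skein-theoretic picture via $\XppDp$). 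Once the bookkeeping of these normalizations is settled, the argument is a formal diagram chase in skew fields.
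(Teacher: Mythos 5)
Your proposal has a genuine gap: it is circular given the logical structure of the paper. The paper proves Theorem \ref{thm.1} (= Theorem \ref{r.52}) \emph{using} Theorem \ref{thm.2} (= Theorem \ref{r.shearchange}), not the other way around. Concretely, in Step 2 of the proof of Theorem \ref{r.52}(a), the assertion that commutativity of Diagram \eqref{eq.dia9} forces $\vp_\D(\al)\in\tpsi(\hYeD)$ requires knowing $\Theta_{\D\D'}(\hYeDp)\subset\hYeD$, i.e.\ that $\tpsi_{\D'}(\hYeDp)\subset\tpsi_\D(\hYeD)$ --- which is exactly the image-invariance claim of Theorem \ref{r.shearchange}(a). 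And in the proof of Theorem \ref{r.52}(b), the simultaneous use of \eqref{eq.dia9} and \eqref{eq.dia6} to pass from $\D$-simple to triangulation-simple knots is only coherent because the $\Theta_{\D\D'}$ appearing in the two diagrams is \emph{the same map}, i.e.\ because the tautological map $(\tpsi_\D)^{-1}\circ\tpsi_{\D'}$ coincides with the Bonahon--Wong/Liu/Hiatt coordinate change --- that is Theorem \ref{r.shearchange}(b). So deducing Theorem \ref{thm.2} from Theorem \ref{thm.1} runs in a circle. (The paper's remark at the end of Section \ref{sec:generators} does sketch an alternative proof of \ref{r.52} avoiding coordinate changes, but your proposal does not invoke it.)

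There is a second, independent gap in your "extend from a generating set" step. You assert that $(\tpsi_\D)^{-1}\circ\tpsi_{\D'}$ is defined on all of $\tY^{(2)}(\D')$ "because $\tpsi_{\D'}$ lands in $\ttS$ and $\tvarphi_\D$ is an isomorphism," but $(\tpsi_\D)^{-1}$ is only defined on the image $\tpsi_\D(\tY^{(2)}(\D))$, so well-definedness on $\tY^{(2)}(\D')$ requires the inclusion $\tpsi_{\D'}(\tY^{(2)}(\D'))\subset\tpsi_\D(\tY^{(2)}(\D))$ --- again precisely the theorem. And even granting that, matching $(\tpsi_\D)^{-1}\circ\tpsi_{\D'}$ with $\Theta_{\D\D'}$ by comparing them on $\varkappa_{\D'}(\ooS)$ presupposes that this image generates the ambient skew field (you flag this yourself at the end). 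That fact is nontrivial and is not established in the paper: the shear generators $Y_a^{\pm1}$ correspond under $\psi$ to quadrilateral monomials $[ab^{-1}cd^{-1}]$ built from $\cP$-arcs of $\cS$, not from elements of $\ooS$, and there is no reason visible from what has been proved that $\tilde\ooS$ (let alone $\varkappa_{\D'}(\ooS)$) recovers $\tY^{(2)}(\D')$ inside the skew field. The paper sidesteps all of this by a direct flip computation (Lemma \ref{r.A1}: explicit formulas \eqref{eq.s0}--\eqref{eq.s6} showing a single flip sends $\tpsi_{\D'}(\tY^{(2)}(\D'))$ into $\tpsi_\D(\tY^{(2)}(\D))$, then swap $\D\leftrightarrow\D'$), identification of $\ttS^{(2)}$ with the subalgebra generated by all $\cP$-quadrilateral monomials (Proposition \ref{r.Ytwo}), and term-by-term comparison with the published Chekhov--Fock/Liu/Hiatt formulas (Lemmas \ref{r.a3} and \ref{r.tpsi}). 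That computation is the genuine content; a diagram chase through the quantum trace map cannot replace it without both the independent proof of Theorem \ref{thm.1} and the missing generation statement.
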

  Thus,  $ \ttSt$ is a concrete realization of the quantum Teichm\"uller space $\cT$, not depending on any triangulation. We also give an intrinsic characterization of $\ttS^{(2)}$ using $\cP$-quadrilaterals, see Section \ref{sec:shear}.  From this point of view, the construction of the coordinate change isomorphism is natural. Theorem   \ref{thm.2} is part of Theorem~\ref{r.shearchange}, which contains also similar statements for the square root version $\YeD$.

\def\bpsi{\bar\psi}
\def\bvkL{\bar{\varkappa}_\La}
\def\YeL{\cY^\ev(\La)}
\subsection{Punctured surfaces and more general surfaces} Suppose $\sS$ is a punctured surface which is obtained from a closed oriented connected surface $\bsS$ by removing a finite set $\cP$. The skein algebra $\ooS$ of $\sS$ and the square root Chekhov-Fock algebra $\YeL$ (depending on an ideal triangulation $\La$ of $\sS$)  are defined as usual. Bonahon and Wong also showed that the quantum trace map (an injective algebra homomorphism)
$$
 \tr_q^\La:\ooS \to \YeL,
 $$
 exists in this case. However, since $\sS$ is not a marked surface in the sense of \cite{Muller}, the Muller algebra cannot be defined in this case.

To remedy this, we introduce a marked surface $\SM$ associated to $\sS$ as follows. For each $p\in \cP$ let $D_p\subset \bsS$ be a small disk such that $p\in \partial D_p$. Removing the interior of each disk $D_p$ from $\bsS$ we get $\Sigma$, which together with $\cP$ forms a marked surface $\SM$. The skein algebra of $\Sigma$ and that of $\sS$ are naturally  identified and denoted by $\ooS$.
Every $\cP$-triangulation $\La$ of $\bsS$ can be extended to a $\cP$-triangulation $\D$ of $\Sigma$. The Muller  algebra $\XD$ of $\SM$ contains a subalgebra $\bXD$ which is built by the standard generators of $\XD$ excluding the boundary elements. There is a natural projection $\pi: \XD \to \bXD$, see Section \ref{sec:puncture}. Define $\bvpD= \pi\circ \varphi_\D: \ooS\to \bXD$. The shear-to-skein map $\bar \psi: \cY^\ev(\La) \embed \bXD $ can be defined using the map $\psi: \YeD \embed \XD $. We will show that the quantum trace map of Bonahon and Wong is equal to $\bvpD: \ooS\to \bXD$, via the shear-to-skein map~$\bpsi$.
\begin{thm} In the diagram
\lbl{thm.3}
$$
\ooS \overset{\bvpD} \longrightarrow \bXD \overset{\bpsi} \rembed \YeL
$$
 the image of $\bpsi$  contains the image of $\bvpD$. The  algebra homomorphism  $\bvkL:\ooS \to \YeD$ defined by
$\bvkL:= \bpsi^{-1} \circ \bvpD$ is equal to the quantum trace map of Bonahon and Wong.
\end{thm}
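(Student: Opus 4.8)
The plan is to reduce the statement to Theorem~\ref{thm.1}, which (combining its two assertions) gives $\varphi_\D=\psi\circ\tr_q^\D$ as maps $\ooS\to\XD$. Beyond Theorem~\ref{thm.1}, only two new ingredients enter the punctured case: the projection $\pi\colon\XD\to\bXD$ from Section~\ref{sec:puncture}, and a comparison between the Bonahon--Wong quantum trace $\tr_q^\La$ of $\sS$ and the quantum trace $\tr_q^\D$ of the associated marked surface $\SM$.

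First I would make explicit the commuting square relating the Muller and Chekhov--Fock sides under the blow-up of the punctures, which is implicit in the construction of $\bpsi$ out of $\psi$. Let $\rho\colon\YeD\to\YeL$ be the surjection sending each generator attached to a boundary edge created by blowing up a puncture to $1$ and fixing the remaining generators; the claim is that $\pi\circ\psi$ kills $\ker\rho$ and that $\bpsi$ is the induced map, so that $\pi\circ\psi=\bpsi\circ\rho$. Granting this, the first assertion of the theorem follows by applying $\pi$ to the inclusion $\im(\varphi_\D)\subseteq\im(\psi)$ of Theorem~\ref{thm.1}:
\[
\im(\bvpD)=\pi\!\left(\im(\varphi_\D)\right)\subseteq\pi\!\left(\im(\psi)\right)=\im(\pi\circ\psi)=\im(\bpsi\circ\rho)=\im(\bpsi),
\]
the last equality because $\rho$ is onto. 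Since $\bpsi$ is injective, $\bpsi^{-1}$ is defined on $\im(\bpsi)\supseteq\im(\bvpD)$, and then
\[
\bvkL=\bpsi^{-1}\circ\bvpD=\bpsi^{-1}\circ\pi\circ\varphi_\D=\bpsi^{-1}\circ\pi\circ\psi\circ\tr_q^\D=\rho\circ\tr_q^\D,
\]
using Theorem~\ref{thm.1} in the third equality and $\pi\circ\psi=\bpsi\circ\rho$ in the fourth.

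It then remains to identify $\rho\circ\tr_q^\D$ with $\tr_q^\La$. For this I would invoke the local, state-sum nature of the Bonahon--Wong construction: the quantum trace of a framed link is assembled from the contributions of the individual triangles of the triangulation, and passing from the ideal triangulation $\La$ of $\sS$ to the $\cP$-triangulation $\D$ of $\Sigma$ replaces each ideal vertex by a small boundary arc without altering the triangles, so $\tr_q^\La$ and $\tr_q^\D$ are computed by the same local formulas once the boundary-edge generators of $\YeD$ are set equal to $1$. Equivalently, one checks $\rho\circ\tr_q^\D=\tr_q^\La$ directly on a set of generators of $\ooS$ (simple loops and arcs), reducing to a computation inside a single triangle or a single blown-up punctured disk. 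Combining, $\bvkL=\rho\circ\tr_q^\D=\tr_q^\La$.

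I expect the main obstacle to be the commuting-square input used above: making the blow-up dictionary precise and checking that $\pi,\rho,\psi,\bpsi$ genuinely fit together. One must pin down how an ideal triangulation $\La$ of $\sS$ extends to a $\cP$-triangulation $\D$ of $\Sigma$ (which boundary edges appear, with what incidences), show that $\pi$ is a well-defined algebra homomorphism onto the subalgebra $\bXD$ --- the quantum-torus relations between boundary and interior generators of $\XD$ must be compatible with collapsing the former to $1$ --- and verify that the vertex-to-face duality of Section~\ref{sec:shear} producing $\psi$ restricts so that the $\bpsi$ built from $\psi$ coincides with the shear-to-skein map intrinsically attached to $\La$. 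The comparison $\rho\circ\tr_q^\D=\tr_q^\La$ is a softer point; if one prefers not to rely on locality of the Bonahon--Wong state sum, it can be obtained from the naturality of $\tr_q$ under change of coordinates together with the marked-surface case already proved.
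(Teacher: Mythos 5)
Your high-level plan — reduce to Theorem~\ref{thm.1} for the associated marked surface $\SM$ and then track what the projection $\pi$ does to both sides — is the same strategy the paper uses (the paper's Proposition~\ref{r.68} is proved via Corollary~\eqref{eq.m1}, which is itself a consequence of Theorem~\ref{r.52}, and then compared with $\tr_q^\La$ computed by Proposition~\ref{r.fc2}). However there are two concrete gaps in the way you propose to carry this out.

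First, the map $\rho:\YeD\to\YeL$ ``sending each generator attached to a boundary edge created by blowing up a puncture to $1$'' does not exist in the form you describe. The boundary loops $c_p$ created by the blow-up are boundary edges of $\D$, so they are \emph{not} in $\oD$ and hence do not give generators of $\cY(\D)$ at all. The extra generators of $\cY(\D)$ compared with $\cY(\La)$ are attached to the newly added inner diagonals, and the relation between them and the generators of $\cY(\La)$ is the \emph{contraction} $\omega$ of Section~\ref{sec.ass}, which identifies several edges of $\oD$ with one edge of $\oL$. The linear map on preferred bases that the diagram $\pi\circ\psi=\bpsi\circ\rho$ actually requires sends $y^\bk$ to $y^\bm$ when $\bk=\bm\Omega$ (i.e.\ when $\bk$ is constant on $\omega$-fibres) and to $0$ otherwise; that map is \emph{not} an algebra homomorphism on all of $\cY(\D)$, for the same reason $\pi$ is not an algebra homomorphism on all of $\XD$: a product of two ``killed'' monomials can land back in the subspace. (Similarly, your parenthetical that $\pi$ ``collapses the boundary generators to $1$'' is not what the paper's $\pi$ does: on the subalgebra $\sX_+(\D)$, where it is an algebra map, $\pi$ is the quotient by the ideal generated by the central elements $c_p$, i.e.\ it sends $c_p\mapsto 0$.) So to make your commuting square rigorous you must restrict to the appropriate subalgebras and show $\tr_q^\D(\ooS)$ lands there --- which is exactly the $\omega$-equivariance discussion in Section~\ref{sec.simple}.

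Second, the step you call a ``softer point'' --- $\rho\circ\tr_q^\D=\tr_q^\La$ via locality of the Bonahon--Wong state sum --- is in fact where the real work of the punctured case lives. Both $\tr_q^\D(\al)$ and $\tr_q^\La(\al)$ are state sums carrying explicit $q$-phases $\uu(s)$, and one must check that (i) the non-$\omega$-equivariant states contribute $0$ after $\pi$, and (ii) for $\omega$-equivariant $s$ the phases $\uu(s)$ and $\uu(\bs)$ agree, in particular that the fake triangles $\tau_p$ contribute trivially. This is the content of Lemma~\ref{r.m1} and especially Lemma~\ref{r.m2} (Claims 1 and 2) in the paper, and it is not automatic from a generic locality principle because the state sum is not a naive product of local weights --- the phases couple consecutive triangles. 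Invoking ``naturality under change of coordinates'' does not bypass this either, since the coordinate change in question is precisely between a triangulation of $\sS$ and one of $\Sigma$, which is not a flip and is not covered by the results of \cite{Liu,Hiatt}. Once you supply the analogue of Lemma~\ref{r.m2}, your route and the paper's converge.
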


 Theorem \ref{thm.3} is a special case of Theorem \ref{r.main1}, which treats more general type of punctured surfaces.

\subsection{Organization of the paper}Section \ref{sec:qtorus} presents the basics of quantum tori, including multiplicative homomorphisms which help to define the shear-to-skein maps later. In Section \ref{sec:marked3} we introduce the notion of skein modules of a marked 3-manifolds. Section \ref{sec:markedsurface} discusses the basics of  marked surfaces, including the duality between the face and the vertex matrix. In Section \ref{sec:markedsur} we  calculate the image of simple knot under $\varphi_\D$, a crucial technical step. In Sections \ref{sec:shear} and \ref{sec:puncture}  we prove the main results, while in Section \ref{sec:generators} the quantum trace of a class of simple knots is calculated.
In the Appendix we prove Theorem \ref{r.shearchange}.

\subsection{Acknowledgements} The author would like to thank  F.~Bonahon, C.~Frohman, A.~Kricker, G.~Masbaum, G.~Muller, J.~Paprocki,  A.~Sikora, and D.~Thurston for helpful discussions.  The author would also like to thank Centre for Quantum Geometry of Moduli Spaces (Arhus) and University of Zurich, where part of this work was done, for their support and hospitality.

\section{Quantum torus}
\lbl{sec:qtorus}
In this paper  $\BN,\BZ,\BQ$ are respectively the set of non-negative integers, the set of integers, and the set of rational numbers. Besides, $q^{1/8}$ is a formal parameter and
$\cR= \BZ[q^{\pm 1/8}]$.

\subsection{Non-commutative product and Weyl normalization} Suppose $\cA$ is an $\cR$-algebra, not necessarily commutative.
Two element $x,y\in \cA$ are said to be {\em $q$-commuting} if there is $\com(x,y)\in \BQ$ such that
$xy = q^{\com(x,y)} yx$. Suppose $x_1,x_2,\dots,x_n \in \cA$ are pairwise $q$-commuting with $\com(x_i,x_j) \in \frac 14 \BZ$, the {\em Weyl normalization} of $\prod_i x_i$ is defined by
$$ \left[ x_1 x_2 \dots x_n \right] := q^{-\frac 12 \sum_{i<j} \com(x_i, x_j)} x_1 x_2 \dots x_n.$$

It is known that the normalized product does not depend on the order, i.e. if $(y_1,y_2,\dots, y_n)$ is a permutation of $(x_1, x_2, \dots, x_n)$, then $[y_1y_2 \dots y_n] = [x_1 x_2 \dots x_n]$.

\subsection{Quantum torus} Let $I,J$ be finite sets.
Denote by $\Mat(I \times J, \BZ)$ the set of all $I\times J$ matrices with entries in $\BZ$, i.e.
 $A \in \Mat(I \times J, \BZ)$ is a function $A: I \times J \to \BZ$.
We write $A_{ij}$ for $A(i,j)$.

We say $A \in \Mat(I \times I, \BZ)$ is {\em antisymmetric} if $A_{ij}= - A_{ji}$.
Assume $A \in \Mat(I \times I, \BZ)$ is antisymmetric and $u=q^{m/4}$ for some $m \in \BZ$. Let $u^{1/2}= q^{m/8}$. Define
 the {\em quantum torus over $\cR$ associated to $(A,u)$} by
\begin{align*}
\bT(A,u,x):= R\la x_i^{\pm 1} , i\in I\ra /(x_i x_j = u^{A_{ij}} x_j x_i).
\end{align*}
We call $x_i, i\in I$ the basis variables of the quantum torus $\bT(A,u,x)$. Letter $x$ indicates that the
basis variables are $x_i, i\in I$. We often write $\bT(A,u)= \bT(A,u,x)$ when the basis variables are
fixed.

It is known that $\bT(A,u)$ is a two-sided Noetherian domain, and hence a two-sided Ore domain, see e.g. \cite{GW}. Denote by $\tT(A,u)$ the skew field (or division algebra) of $\bT(A,u)$.

Let $\BZ^I$ be the set of all maps $\bk: I \to \BZ$. For $\bk \in \BZ^I$ define the {\em normalized monomial} $x^\bk$  by
$$ x^\bk = \left[ \prod_{i \in I} x_i^{\bk(i)} \right].$$
The set $\{ x^\bk \mid \bk \in \BZ^I\}$ is an $\cR$-basis of  $\bT(A,u,x)$.

We will consider $\bk\in \BZ^I$ as a row vector, i.e.
a matrix of size $1 \times I$. Let $\bk^\dag$ be the transpose of $\bk$.
Define an anti-symmetric $\BZ$-bilinear form on $\BZ^I$ by
$$\la \bk,\bn\ra_A:= \sum A_{ij} \bk(i) \bn(j) = \bk   A   \bn^\dag.$$

The following well-known fact follows easily from the definition.

\begin{proposition} \lbl{r.11s}
For $\bk,\bn,\bk_1,\dots,\bk_m \in \BZ^I$, one has
\begin{align}
\lbl{eq.01a}
x^{\bk} x^{\bn}  & = u^{\la \bk, \bn\ra_A} \, x^{\bn}
x^{\bk}\\
\lbl{eq.01}
x^{\bk_1} x^{\bk_2} \dots x^{\bk_m} &= u^{\frac 12 \sum_{j<l }\la \bk_j, \bk_l\ra_A} \,  x^{\sum_j\bk_j}.
\end{align}
In particular, for $n \in \BZ$ and $ \bk \in \BZ^I$, one has
$
(x^{\bk})^n = x^{ n \bk}$.
\end{proposition}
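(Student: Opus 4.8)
The plan is to deduce all three assertions from the single ``merging'' identity \eqref{eq.01}, and to prove \eqref{eq.01} itself by a direct Weyl-normalization computation together with an induction on $m$. First I would fix a linear order $I=\{1<\dots<N\}$ and, for $\bk\in\BZ^I$, write $P_\bk:=x_1^{\bk(1)}\cdots x_N^{\bk(N)}$ for the ordered un-normalized monomial, regarding each $x_i^{-1}$ as a $q$-commuting variable with $\com(x_i^{-1},x_j)=-\com(x_i,x_j)$. Since $x_ix_j=u^{A_{ij}}x_jx_i$ with $u=q^{m/4}$, all basis variables and their inverses are pairwise $q$-commuting with $\com\in\tfrac14\BZ$, and unwinding the definition of the Weyl normalization gives $x^\bk=u^{-\frac12 S(\bk)}P_\bk$, where $S(\bk):=\sum_{i<j}A_{ij}\bk(i)\bk(j)$; only cross-block pairs contribute here, since $\com(x_i,x_i)=0$ whatever the signs of the exponents.

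Next I would compute $P_{\bk_1}P_{\bk_2}$ by sliding each $x_i^{\bk_2(i)}$ leftward past $x_{i+1}^{\bk_1(i+1)},\dots,x_N^{\bk_1(N)}$ via $x_j^ax_i^b=u^{abA_{ji}}x_i^bx_j^a$; this yields $P_{\bk_1}P_{\bk_2}=u^{F(\bk_1,\bk_2)}P_{\bk_1+\bk_2}$ with $F(\bk_1,\bk_2)=\sum_{i<j}A_{ji}\bk_1(j)\bk_2(i)$. Combining this with $x^\bk=u^{-\frac12 S(\bk)}P_\bk$ reduces the case $m=2$ of \eqref{eq.01} to the scalar identity
\[
-\tfrac12 S(\bk_1)-\tfrac12 S(\bk_2)+F(\bk_1,\bk_2)+\tfrac12 S(\bk_1+\bk_2)=\tfrac12\la\bk_1,\bk_2\ra_A,
\]
which one verifies by expanding $S(\bk_1+\bk_2)$ bilinearly and splitting $\la\bk_1,\bk_2\ra_A=\bk_1A\bk_2^\dag$ into its $i<j$, $i>j$ and $i=j$ parts (the last vanishing since $A_{ii}=0$, the $i>j$ part becoming $-\sum_{i<j}A_{ij}\bk_2(i)\bk_1(j)$ after relabeling and using $A_{ji}=-A_{ij}$). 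This tracking of the $\tfrac12$-factors produced by the Weyl normalization is the only genuinely computational step, and I expect it to be the main (though modest) obstacle.

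The general case of \eqref{eq.01} then follows by induction on $m$: write $x^{\bk_1}\cdots x^{\bk_m}=(x^{\bk_1}\cdots x^{\bk_{m-1}})\,x^{\bk_m}$, apply the inductive hypothesis and the case $m=2$, and collapse the resulting exponent using the bilinearity identity $\la\bk_1+\dots+\bk_{m-1},\bk_m\ra_A=\sum_{j<m}\la\bk_j,\bk_m\ra_A$. Finally I would derive the three stated claims. For \eqref{eq.01a}: since $A$ is antisymmetric, the form $\la\cdot,\cdot\ra_A$ is antisymmetric (a scalar equals its transpose and $(\bk A\bn^\dag)^\dag=-\bk A\bn^\dag$), so applying \eqref{eq.01} with $m=2$ to $x^\bk x^\bn$ and to $x^\bn x^\bk$ and dividing gives $x^\bk x^\bn=u^{\la\bk,\bn\ra_A}x^\bn x^\bk$. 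For the power formula: $\la\bk,\bk\ra_A=\bk A\bk^\dag=0$, so \eqref{eq.01} with all $\bk_j=\bk$ gives $(x^\bk)^n=x^{n\bk}$ for $n\ge0$; and the case $m=2$ applied to $(\bk,-\bk)$ gives $x^\bk x^{-\bk}=x^{0}=1$, whence $x^{-\bk}=(x^\bk)^{-1}$ and $(x^\bk)^n=x^{n\bk}$ for all $n\in\BZ$.
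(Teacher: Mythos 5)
Your argument is correct, and it fills in what the paper leaves unproved: the paper simply asserts that Proposition \ref{r.11s} ``follows easily from the definition'' and gives no proof at all. Your route is the standard one: express $x^\bk$ in terms of the ordered monomial $P_\bk$ via $x^\bk = u^{-\frac12 S(\bk)}P_\bk$, compute the commutation factor $F(\bk_1,\bk_2)$ when reordering $P_{\bk_1}P_{\bk_2}$ into $P_{\bk_1+\bk_2}$, and check that the resulting scalar exponent is $\tfrac12\la\bk_1,\bk_2\ra_A$ by splitting $\bk_1 A\bk_2^\dag$ into its $i<j$ and $i>j$ parts. That algebra checks out, as does the induction on $m$ and the derivation of \eqref{eq.01a} and $(x^\bk)^n=x^{n\bk}$.

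One small slip worth fixing in the write-up: to justify antisymmetry of $\la\cdot,\cdot\ra_A$ you wrote $(\bk A\bn^\dag)^\dag=-\bk A\bn^\dag$, which read literally says the scalar is its own negative. What you want is $(\bk A\bn^\dag)^\dag=\bn A^\dag\bk^\dag=-\bn A\bk^\dag$, and then, since a $1\times 1$ matrix equals its transpose, $\la\bk,\bn\ra_A=-\la\bn,\bk\ra_A$; this is exactly what the division of the two $m=2$ instances of \eqref{eq.01} uses. The aside about ``only cross-block pairs contribute'' is harmless but not really needed, since the definition $x^\bk=\bigl[\prod_{i\in I}x_i^{\bk(i)}\bigr]$ already has exactly one factor per index $i$, so the Weyl sum ranges only over pairs $i\neq j$.
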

\begin{remark} \lbl{r.def7}
The quantum torus $\bT(A,u,x)$ can be defined as the free $\cR$-module with basis $\{ x^\bk \mid \bk \in\BZ^I\}$ subject to the relation \eqref{eq.01a}.
\end{remark}

\subsection{Reflection symmetry}
There is a unique $\BZ$-algebra anti-homomorphism
$$ \chi: \bT(A,u,x) \to \bT(A,u,x)$$
  satisfying
$$ \chi(q^{1/8})= q^{-1/8}, \quad \chi(x_i) = x_i  \ \forall i\in I.$$
Here $\chi$ is an algebra anti-homomorphism means $\chi(xy)= \chi(y) \chi(x)$ for all $x, y \in \bT(A,u)$.
Note that $\chi$ is an anti-involution of $\bT(A,u)$ since $\chi^2=\id$. We call $\chi$ the {\em reflection
symmetry}.
It is clear that $\chi$ extends to an anti-involution of $\tT(A,u)$.

An element $z\in \bT(A,u)$ is called {\em reflection invariant} if $\chi(z)=z$. Similarly, if
$A \in \Mat(I \times I,\BZ)$ and $B \in \Mat(J \times J,\BZ)$ are antisymmetric matrices, an $\cR$-algebra
homomorphism
$f : \bT(A,u) \to \bT(B,v)$ is said to be {\em reflection invariant}  if $f \chi = \chi f$.

From the definition, one sees that
each normalized monomial $x^\bk$ is reflection invariant.
The following simple fact will be helpful and used many times.
\begin{lemma}
\lbl{r.reflection} Suppose in  $z\in \bT(A,u)$ is reflection invariant and
\be
\lbl{eq.92}
 z = \sum_{j=1}^m q^{r_j} x^{\bk_j},
 \ee
where $r_j\in \BQ$, and $\bk_j$ are pairwise distinct.  Then all $r_j=0$, i.e.
$ z = \sum_{j=1}^m x^{\bk_j}.$
\end{lemma}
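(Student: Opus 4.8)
*The plan is to use the fact that each normalized monomial $x^\bk$ is itself reflection invariant, together with the linear independence of the $x^\bk$ over $\cR$.*

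\begin{proof}[Proof proposal]
The key observation is that $\chi$ fixes each normalized monomial $x^\bk$ and sends $q^{1/8}$ to $q^{-1/8}$, hence sends $q^{r}$ to $q^{-r}$ for any $r\in\BQ$ (more precisely, for $r$ of the form $m/8$; one should note that all exponents arising here lie in $\frac18\BZ$, or simply extend $\chi$ formally to all rational powers of $q$). Applying $\chi$ to the expression \eqref{eq.92} and using that $\chi$ is additive and $\cR$-semilinear in the appropriate sense, I would compute
$$ \chi(z) = \sum_{j=1}^m \chi(q^{r_j} x^{\bk_j}) = \sum_{j=1}^m q^{-r_j}\,\chi(x^{\bk_j}) = \sum_{j=1}^m q^{-r_j} x^{\bk_j}. $$
Since $z$ is reflection invariant, $\chi(z)=z$, so we get
$$ \sum_{j=1}^m q^{r_j} x^{\bk_j} = \sum_{j=1}^m q^{-r_j} x^{\bk_j}. $$

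Now I invoke that $\{x^\bk \mid \bk\in\BZ^I\}$ is an $\cR$-basis of $\bT(A,u,x)$, and in particular an $\cR$-linearly independent set; since the $\bk_j$ are pairwise distinct, comparing coefficients of $x^{\bk_j}$ on both sides forces $q^{r_j} = q^{-r_j}$ in $\cR$ for each $j$. As $q^{1/8}$ is a formal parameter (so the only relation among powers of $q$ is the trivial one, i.e. $q^a = q^b$ in $\cR$ implies $a=b$), this yields $r_j = -r_j$, hence $r_j = 0$ for all $j$. Therefore $z = \sum_{j=1}^m x^{\bk_j}$, as claimed.

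The only genuinely delicate point — and the one I would be careful to phrase correctly — is the bookkeeping on the exponents of $q$: one must ensure that $q^{r_j}$ makes sense and that $\chi$ really does invert it, which is automatic once one knows that the coefficients appearing when $z$ is written in the monomial basis lie in $\cR = \BZ[q^{\pm 1/8}]$ (so $r_j \in \frac18\BZ$ and $q^{r_j}\in\cR$). Everything else is immediate from the definitions: the semilinearity of $\chi$, the invariance of normalized monomials under $\chi$ (stated in the text just before the lemma), and the freeness of the monomial basis (Remark \ref{r.def7}).
\end{proof}
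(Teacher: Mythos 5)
Your proposal is correct and follows the paper's proof essentially verbatim: apply $\chi$ to the given expansion, use that each $x^{\bk_j}$ is reflection invariant and $\chi$ inverts powers of $q$, then invoke the uniqueness of the expansion in the preferred basis to force $q^{r_j}=q^{-r_j}$ and hence $r_j=0$. The extra remarks on exponents lying in $\tfrac18\BZ$ are a harmless elaboration of the same argument.
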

\begin{proof}
Applying $\chi$ to \eqref{eq.92}, we have
$
z = \sum_{j=1}^m q^{-r_j} x^{\bk_j}.
$
 Since $k_j$ are pairwise distinct, the presentation of $z$ as a linear combination of $x^{\bk_j}$ is
 unique. Hence we must have $q^{r_j}= q^{-r_j}$, or $r_j=0$.
\end{proof}

\begin{remark} Lemma \ref{r.reflection}
is one reason why we use $q$ as an indeterminate, not a complex number.
\end{remark}

\nc{\cB}{\mathcal B}
\subsection{Based modules}\lbl{sec.based}
A {\em based $\cR$-module} $(V,\cB)$ consists of a free $\cR$-module $V$ and a preferred base $\cB$. Another
based module $(V', \cB')$ is a {\em based submodule} of  $(V,\cB)$ if $V'\subset V$ and $\cB'\subset \cB$.
In that case,  the {\em canonical projection} $\pi: V \to V'$ is the $\cR$-linear map given by $\pi(v)=v$ if
$v\in \cB'$ and $\pi(v)=0$ if $v \in \cB\setminus \cB'$.
The following is obvious and will be useful.
\begin{lemma}
\lbl{r.basesub}
Let  $(V',\cB')$ be a based submodule of $(V,\cB)$. Suppose $a\in V'$ and
$ a= \sum_{j=1}^m q^{r_j} b_j,$
where each $b_j \in \cB$ and $r_j \in \BQ$. Then $b_j \in \cB'\subset V'$ for every $j=1,\dots,m$.
\end{lemma}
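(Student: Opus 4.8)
The statement is essentially a triviality once one unpacks the definitions, so the "proof" is really just a bookkeeping argument; let me nonetheless lay out the steps. The plan is to expand $a$ in the preferred basis $\cB$ of the ambient module $V$ and compare with the given expression. First I would note that, since $(V,\cB)$ is a free $\cR$-module with base $\cB$, every element of $V$ — in particular $a$ — has a \emph{unique} expression as a finite $\cR$-linear combination of elements of $\cB$. Write this expansion as $a = \sum_{b\in\cB} c_b\, b$ with $c_b\in\cR$, only finitely many nonzero.

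Next I would exploit that $a$ lies in the based submodule $(V',\cB')$. By definition of a based submodule, $V'$ is the free $\cR$-submodule of $V$ spanned by $\cB'\subseteq\cB$; hence the unique $\cB$-expansion of any element of $V'$ is supported on $\cB'$. Applying this to $a$: in $a=\sum_{b\in\cB}c_b\,b$ we must have $c_b=0$ whenever $b\in\cB\setminus\cB'$, i.e. the support $\{b\in\cB: c_b\neq 0\}$ is contained in $\cB'$.

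Finally I would match this against the hypothesis $a=\sum_{j=1}^m q^{r_j}b_j$ with $b_j\in\cB$ and $r_j\in\BQ$ (here $q^{r_j}\in\cR$ after clearing through the formal parameter $q^{1/8}$, or one simply reads the $b_j$ as not necessarily distinct and collects terms). Grouping equal $b_j$'s, this is precisely a $\cB$-expansion of $a$, so by uniqueness its support equals $\{b\in\cB:c_b\neq 0\}$. In particular each $b_j$ with a nonzero coefficient — and $q^{r_j}\neq 0$ always — lies in the support, hence in $\cB'\subseteq V'$. This gives $b_j\in\cB'$ for every $j$, as claimed. \emph{Remark:} there is essentially no obstacle here; the only point requiring a word of care is that the $b_j$ in the hypothesis need not be distinct, so one should collect terms before invoking uniqueness of the basis expansion, and one should observe that the scalars $q^{r_j}$ are units in $\cR$ (indeed nonzero is all that is needed), so no coefficient accidentally vanishes.
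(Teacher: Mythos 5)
Your proof is correct, and since the paper gives no proof at all — it simply remarks ``The following is obvious and will be useful'' — your argument is exactly the bookkeeping the authors had in mind: expand in the free basis $\cB$, use uniqueness, and observe that the supported basis elements must lie in $\cB'$. Your closing caveat about collecting repeated $b_j$ and about the coefficients being nonzero is a reasonable extra bit of care, though in the paper's actual applications (e.g.\ Step~6 of Theorem~\ref{r.fc}) the $b_j$ are already distinct, so even that precaution is not strictly needed.
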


For an anti-symmetric matrix $A\in \Mat(I \times I, \BZ)$,
we will consider the quantum torus $\bT(A,u,x)$ as a based module with the preferred base $\{x^{\bk} \mid
\bk \in \BZ^I\}$.
Suppose $I'\subset I$ and $A'$ is the $I'\times I'$ submatrix of $A$. Then $\bT(A',u)$ is a based
submodule of $\bT(A,u)$. The canonical projection is not an algebra homomorphism unless $A'=A$. However,
if $V$ is the $\cR$-submodule of $\bT(A,u)$ spanned by $x^\bk$ such that $\bk(i) \ge 0 \ \forall i \in
I\setminus I'$, then the restriction of $\pi$ onto $V$ is an algebra homomorphism.

\subsection{Multiplicatively linear homomorphism} \lbl{sec.homo}
Suppose $A \in \Mat(I \times I,\BZ)$ and $B \in \Mat(J \times J,\BZ)$ are antisymmetric matrices, and both
$u,u^r$ are integral powers of $q^{1/4}$, for some rational number $r$. Consider the quantum tori
$\bT(A,u^r,x)$ and $\bT(B,u,y)$.

For a matrix $H\in \Mat(I \times J, \BZ)$, define an $\cR$-linear map
$$ \psi=\psi_H: \bT(A,u^r,x)\to \bT(B,u,y), \quad \text{by } \  \psi(x^\bk) := y^{\bk H}.$$

Denote by $H^\dag$ the transpose of $H$.

\begin{proposition} \lbl{r.homo}
(a) The above defined $\psi$ is a $\cR$-algebra homomorphism if and only if
\be
\lbl{eq.1a}
HBH^\dagger = rA.
\ee

(b) The map $\psi$ is reflection invariant.

(c) Suppose $\rk(H) = |I|$. Then  $\psi$ is injective.

\end{proposition}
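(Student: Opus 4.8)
The plan is to establish all three parts by direct computation against the preferred bases $\{x^\bk\}$ of $\bT(A,u^r,x)$ and $\{y^\bm\}$ of $\bT(B,u,y)$, using the multiplication rule of Proposition \ref{r.11s}.

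For part (a): since $\psi$ is $\cR$-linear and $\psi(x^0)=y^0=1$, the map $\psi$ is an algebra homomorphism precisely when $\psi(x^\bk x^\bn)=\psi(x^\bk)\,\psi(x^\bn)$ for all $\bk,\bn\in\BZ^I$. I would expand both sides. On the left, \eqref{eq.01} applied inside $\bT(A,u^r,x)$ gives $x^\bk x^\bn=(u^r)^{\frac12\la\bk,\bn\ra_A}x^{\bk+\bn}$, and since $\la\bk,\bn\ra_A=\bk A\bn^\dagger$,
\[
\psi(x^\bk x^\bn)=u^{\frac r2\,\bk A\bn^\dagger}\,y^{(\bk+\bn)H}.
\]
On the right, \eqref{eq.01} applied inside $\bT(B,u,y)$ gives $y^{\bk H}y^{\bn H}=u^{\frac12\la\bk H,\bn H\ra_B}y^{(\bk+\bn)H}$, and $\la\bk H,\bn H\ra_B=\bk\,(HBH^\dagger)\,\bn^\dagger$, so
\[
\psi(x^\bk)\psi(x^\bn)=u^{\frac12\,\bk(HBH^\dagger)\bn^\dagger}\,y^{(\bk+\bn)H}.
\]
As $\{y^\bm\}$ is an $\cR$-basis, the two expressions coincide for all $\bk,\bn$ if and only if $u^{\frac12\,\bk(HBH^\dagger-rA)\bn^\dagger}=1$ for all $\bk,\bn\in\BZ^I$. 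Since $q^{1/8}$ is a formal parameter a nonzero power of $u$ is never $1$, so letting $\bk,\bn$ range over the standard basis vectors forces each entry of $HBH^\dagger-rA$ to vanish, i.e. $HBH^\dagger=rA$; conversely this identity makes the two displays agree, giving the ``if'' direction.

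For part (b): the text already records that every normalized monomial is reflection invariant, so $\chi(x^\bk)=x^\bk$ and $\chi(y^\bm)=y^\bm$. Then for every $\bk$ one has $(\psi\circ\chi)(x^\bk)=\psi(x^\bk)=y^{\bk H}=\chi(y^{\bk H})=(\chi\circ\psi)(x^\bk)$, and both $\psi\chi$ and $\chi\psi$ send $q^{1/8}\mapsto q^{-1/8}$; since the two maps agree on the $\cR$-basis $\{x^\bk\}$ and on scalars, they are equal, i.e. $\psi$ is reflection invariant. For part (c): $\rk(H)=|I|$ means the rows of $H$ are linearly independent over $\BQ$, so $\bk\mapsto\bk H$ is an injection $\BZ^I\hookrightarrow\BZ^J$; hence $\psi$ sends the basis $\{x^\bk\}$ to the pairwise distinct, and thus $\cR$-linearly independent, basis elements $\{y^{\bk H}\}$ of $\bT(B,u,y)$, so $\psi$ is injective.

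There is no serious obstacle here; the only subtlety is the implication ``$u^{t}=1\Rightarrow t=0$'' used in (a) — which is precisely why $q$ is kept as an indeterminate, cf. the remark after Lemma \ref{r.reflection} — together with carefully tracking the scalar $r$ coming from the mismatch between the base parameters $u^r$ and $u$ on the two sides.
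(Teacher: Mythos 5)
Your proof is correct and follows the same route as the paper's: part (a) by comparing the coefficients in the normalized-monomial multiplication rule of Proposition \ref{r.11s}, part (b) from reflection invariance of normalized monomials, and part (c) by observing that $\bk\mapsto\bk H$ injects the preferred basis into the preferred basis. The paper states all three parts in a single line each; your write-up simply fills in the arithmetic the paper leaves implicit (including the tacit assumption $u\neq 1$ needed for the ``only if'' in (a)).
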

\begin{proof}
(a) follows right away from \eqref{eq.01a}. See Remark \ref{r.def7}.

(b) Since  $x^\bk$ and $ y^{\bk H}$ are reflection invariant, $\psi$ is reflection invariant.

(c)  Since  $\rk(H)=|I|$,  $\psi$ maps
injectively the preferred base of $\bT(A, u^r)$ into the preferred base of $\bT(B,u)$. Hence, $\psi$ is
injective.
\end{proof}
In case $\rk(H)=|I|$,  a left inverse of $\psi$ can be given by a multiplicative linear homomorphism.

\newcommand{\qbinom}[2]{\left[\begin{matrix}
#1 \\ #2
\end{matrix}  \right]}


\def\np{\newpage}
\def\pM{\partial M}
\def\cN{\mathcal N}
\def\MN{(M,\cN)}
\def\tiX{\tilde {\sX}}
\def\tcS{\tilde \cS}
\section{Skein modules of 3-manifolds} \lbl{sec:marked3}
\subsection{Marked 3-manifold} {\em A marked 3-manifold $\MN$} consists of
an oriented connected 3-manifold $M$  with (possibly empty) boundary $\pM$ and  a 1-dimensional oriented
submanifold $\cN \subset \pM$ such that $\cN$ is the disjoint union of several open intervals. Here an
open interval in $\pM$ is an oriented 1-dimensional submanifold of $\pM$ diffeomorphic to the interval
$(0,1)$.

{\em An  $\cN$-link $L$} (in $M$) is a compact 1-dimensional non-oriented smooth submanifold of $M$ equipped
with a normal vector field
such that $L \cap
\cN = \partial L$  and at a boundary point in $\partial L=L \cap
\cN $, the normal vector is tangent of $\cN$ and determines the orientation of $\cN$. Here a {\em normal
vector field} on $L$  is a vector field not tangent to $L$ at any point.
The empty set is also considered an $\cN$-link.
 Two  $\cN$-links are {\em $\cN$-isotopic} if they are
 isotopic through the class of $\cN$-links. Very often we identify an $\cN$-link with its $\cN$-isotopy class.
The normal vector field is usually called a framing of $L$.
    All links considered in this paper are framed.
 \subsection{Kauffman bracket skein modules}
 Recall that $\cR= \BZ[q^{\pm 1/8}]$.
The {\em Kauffman bracket skein module}  $\cS\MN$ is  the $\cR$-module freely spanned by isotopy classes of
 $\cN$-links in $\MN$ modulo  the usual {\em skein relation} and  the {\em trivial loop relation},
and
 the new {\em trivial arc relation} (see Figure \ref{fig:skein}).
Here and in all Figures, framed links are drawn with blackboard framing.
 \FIGc{skein}{Skein relation, trivial loop relation, trivial arc relation}{2.2cm}

 More precisely,
 \begin{itemize}
 \item  The skein relation: if  $L, L_+, L_-$ are  identical except in a ball in which they look like
     in Figure \ref{fig:skein1},
    then
 \be \notag
 L  = q L_+ + q^{-1} L_-
 \ee
 \FIGc{skein1}{From left to right:  $L, L_+, L_-$.}{1.5cm}
\item The trivial loop relation:
 If  $L$ is a loop  bounding a disk in $M$ with framing perpendicular to the disk, then
 $$L= -q^2 -q^{-2}.$$

\item   The trivial arc relation: If $L= L' \sqcup a $, where $a$ is a trivial arc in $M \setminus L'$
    then $L=0$. Here $a$ is an trivial arc in $M \setminus L'$ means  $a$ and a part of $\cN$ co-bound
    an embedded disc in $M\setminus L'$.

\end{itemize}

\begin{proposition}\lbl{r.boundary}
In $\cS\MN$, the reordering relation depicted in Figure \ref{fig:boundary} holds.
\end{proposition}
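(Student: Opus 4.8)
The relation in Figure~\ref{fig:boundary} is local, so the plan is to verify it by applying the three defining relations of $\cS\MN$ inside a small ball $B$ that meets $\pM$ in a disk and meets $\cN$ in a single sub-arc. Inside $B$ the two pictures being compared differ by one crossing sitting between $\cN$ and the rest of the link: in one of them the two strands run parallel to the sub-arc of $\cN$ and end at two consecutive points $p_-\prec p_+$ of $\cN$, with their ends joined to the two incoming strand-ends of $\partial B$ in a fixed way, while in the other the same four points of $\partial B$ are joined through a single crossing. First I would apply the skein relation to that crossing, writing the crossed diagram as $q$ times one smoothing plus $q^{-1}$ times the other.

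The crux is then to identify the two smoothings. One of them reconnects the two strand-ends lying on $\cN$ to each other by a short arc bulging slightly into $M$ along $\pM$; together with the sub-arc of $\cN$ from $p_-$ to $p_+$ this short arc cobounds an embedded half-disk in $B$ disjoint from the rest of $L$, hence it is a trivial arc, and the trivial arc relation kills this term. (If the two ends happen to belong to the same component of $L$, this smoothing additionally closes that component into a loop, but the $p_-$-to-$p_+$ trivial arc is still present, so the term still vanishes.) The other smoothing is isotopic, rel $\partial B$, to the parallel picture in which the two strand-ends on $\cN$ appear in the opposite order, i.e. to the reordered diagram. Reading off the surviving coefficient — $q$ or $q^{-1}$, according to the handedness of the original crossing — yields the identity drawn in Figure~\ref{fig:boundary}; if the figure is drawn with the opposite crossing one runs the same computation on the mirror crossing, or equivalently reads it backwards using invertibility of $q$ in $\cR$.

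I do not expect a conceptual obstacle here: the proof is one application of the skein relation followed by one application of the trivial arc relation. The only points that need care are bookkeeping ones — checking that no Reidemeister-I type twist is introduced when the parallel strands are isotoped into the crossed position (at each endpoint the framing must remain tangent to $\cN$ and positively oriented, which it can), and matching the exponent of $q$ and the handedness of the two local pictures exactly to those in Figure~\ref{fig:boundary}.
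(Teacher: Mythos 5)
Your proposal is correct and follows essentially the same route as the paper's proof: isotope one side so that a single crossing appears near $\cN$, apply the skein relation to that crossing, and observe that one of the two smoothings contains a trivial arc (an arc cobounding a half-disk with a sub-arc of $\cN$) so it vanishes, leaving the reordered picture with the expected power of $q$. The paper presents this as a three-step chain of pictures (isotopy, skein relation, trivial arc relation), which is exactly what you describe.
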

\FIGc{boundary}{ Reordering relation.}{1.9cm}

Here in Figure \ref{fig:boundary} we assume that  $\cN$ is perpendicular to the page and  its intersection
with the page is the bullet denoted by $N$. The vector of orientation of $\cN$ is pointing to the reader.
There are two strands of the links coming to $\cN$ near $N$, the lower one being depicted by the broken
line.
\begin{proof}\FIGc{boundary.proof}{Proof of Proposition \ref{r.boundary}}{1.6cm}
The proof is given in Figure \ref{fig:boundary.proof}. Here the first identity is an isotopy, the second
is the skein relation, the third follows from the trivial arc relation.
\end{proof}

\begin{remark} (a) The orientation of $M$ is very important in the skein relation.

(b)
Kauffman bracket skein modules were introduced by J. Przytycki \cite{Przy} and V. Turaev \cite{Turaev}
for the case when the marking set $\cN$ is empty. Muller \cite{Muller} introduced Kauffman bracket skein
modules for marked surfaces, see Section \ref{sec:markedsur}.
Here we generalize Muller definition to the case of marked 3-manifolds. At first glance, our definition is
different from that of Muller for the original case of marked surfaces. The reason is Muller used
link diagrams to define the skein modules, and he has to impose the reordering relation since it is not a
consequence of other relations if one considers link diagrams. Here we use links in 3-manifolds, and the
reordering relation is a consequence of the other relations and the topology afforded by the 3-rd
dimension.
\end{remark}

\def\ocP{\mathring {\cP}}

\section{Generalized marked  Surfaces}
\lbl{sec:markedsurface}

Here we present basic facts about surfaces, their triangulations, and the vertex matrix and the
face matrix associated to a triangulation. In Subsection \ref{sec.dual} we discuss the duality between the
vertex matrix and the face matrix.


\subsection{Definitions and basic facts} {\em A generalized marked surface} $\SM$ consists of
 a connected compact oriented surface $\Sigma$ with (possibly empty) boundary $\pS$, and a finite set $\cP \subset \Sigma$. Elements of $\cP$ are called {\em marked points}.
If $\cP\subset \pS$, then $\SM$ is called a {\em marked surface}\footnote{Our generalized marked surface is the same as  ``punctured surface with boundary" in \cite{BW0}, and
our marked surfaces is the same as ``marked surface" in \cite{Muller}.}.

\def\pr{\operatorname{pr}}



A {\em $\cP$-link} in $\Sigma$ is an immersion $\al: C\to \Sigma$, where $C$ is compact 1-dimensional non-oriented
manifold, such that
\begin{itemize}
\item the restriction of $\al$ onto the interior of $C$ is an embedding into  $\Sigma \setminus \cP$,
    and
\item $\al$ maps the boundary of $C$ into $\cP$.
    \end{itemize}
    The image of a connected component of $C$ is called a component of $\al$. When $C$ is a $S^1$, we call
    $\al$ an {\em $\cP$-knot}, and when $C$ is $[0,1]$, we call $\al$ a {\em $\cP$-arc}.
 \no{   In the latter case,
    the images of $[0,1/2)$ and $(1/2,1]$ under $\al$ are called  {\em half-arcs} of $\al$.
    A {\em
$\cP$-half-arc} is a half-arc of a $\CP$-arc.
   }
Two $\cP$-links are {\em $\cP$-isotopic} if they are isotopic in the class of $\cP$-links.
 Very often we identify a $\cP$-link with 
 its image in $\Sigma$.

Suppose $\al, \beta$ are
$\cP$-links. An {\em internal common point} of $\al$ and $\beta$ is point  in $(\al\cap \beta) \setminus \cP$.
  Let $\mu(\al,\beta)$ denote the minimum number of internal common points of $\al'$ and $\beta'$, over all transverse pairs $(\al',\beta')$ such that $\al'$ is $\cP$-isotopic to $\al$ and $\beta'$ is $\cP$-isotopic to $\beta$.  It is known that there is a $\cP$-link $\gamma$ $\cP$-isotopic to $\beta$ such that
 $|\gamma\cap a| = \mu(\beta,a)$ for any component $a$ of $\al$, see \cite{FHS82,FST}.

A $\cP$-link is {\em essential} if it does not have a component bounding a disk whose interior is in $\Sigma\setminus \cP$; such a
component is either a smooth trivial knot in $\Sigma \setminus \cP$, or a closed $\cP$-arc bounding a disk whose interior is in
$\Sigma \setminus \cP$. By convention, the empty set is considered an essential $\cP$-link.

A $\cP$-arc is called a {\em boundary arc} if it is $\cP$-isotopic to an arc in $\pS$. A $\cP$-arc is {\em
inner} if it is not a boundary arc.

\subsection{Triangulation} \lbl{sec.42}
 {\em A $\cP$-triangulation of $\Sigma$}, also called a triangulation of $\SM$, is a  triangulation of $\Sigma$ whose set of vertices is $\cP$. We will always assume that $\SM$ is {\em triangulable}, i.e. it has at least one $\cP$-triangulation. It is known that $\SM$ is triangulable if and only if every connected component of $\pS$ has at least one marked point and $M$ is not one of the following:
 \begin{itemize}
 \item a sphere with one or two marked points;
 \item a monogon with no interior marked point; or
 \item a digon with no interior marked point;
 \end{itemize}

 For a  triangulable generalized marked surface $\SM$, one can
use the following more technical definition (see e.g. \cite{FST,Muller}) of triangulation.

{\em A $\cP$-triangulation of $\Sigma$} is a collection $\D$ of $\cP$-arcs such that
\begin{itemize}
\item[(i)] no two $\cP$-arcs in $\D$ intersect in $\Sigma \setminus \cP$ and no two are $\cP$-isotopic,
    and
\item[(ii)] $\D$ is maximal amongst all collections of $\cP$-arcs with the above property.
\end{itemize}

An element of $\D$ is called an {\em edge} of the triangulation.
It can be proved that if $\D$ is a  triangulation, then one can replace $\cP$-arcs in $\D$ by
 $\cP$-arcs in their respective $\cP$-isotopy  classes such that  every boundary arc in $\D$ does lie on the
 boundary $\pS$. We always assume the $\cP$-arcs in a  triangulation  satisfy this requirement.

{\em A triangulated generalized marked surface} is a generalized marked surface equipped with a triangulation.

A {\em $\cP$-$n$-gon} is a  smooth map $\gamma: \sigma \to \Sigma$ from a regular $n$-gon $\sigma$ (in the standard plane)  to $\Sigma$ such that
(a) the restriction of $\gamma$ onto the interior $ \mathring \sigma$ of
$\sigma$ is a diffeomorphism onto its image, (b) the restriction of $\gamma$
onto each edge of $\sigma$ is a $\cP$-arc, called an edge of $\gamma$.

A  $\cP$-triangulation  $\D$ cuts $\Sigma$ into {\em $\cP$-triangles}, i.e. the closure of each connected component of $\Sigma \setminus E_\D$, where $E_\D= \cup_{a\in \D}a$, has the structure of a
$\cP$-triangle.  Denote by $\cF(\D)$ the set of all triangles of the triangulation $\D$.
Note that two edges of a  triangle $\tau \in \cF(\D)$  either coincide  (i.e. have the same images) or  do not have internal common points and are not $\cP$-isotopic. When two edges of a  triangle $\tau \in \cF(\D)$  coincide, $\tau$ is called a {\em self-folded triangle}, see Figure \ref{fig:triangle1}.
If $\SM$ is a marked surface, i.e. $\cP\subset \pS$, then a triangulation of $\SM$ cannot have self-folded triangle.

\FIGc{triangle1}{A triangle and a self-folded triangle}{1.5cm}

A $\cP$-knot is said to be {\em $\D$-normal}, where $\D$ is a $\cP$-triangulation, if $\al$ is non-trivial and $|\al\cap a| = \mu(\al,a)$ for all $a\in \D$.
Every non-trivial $\cP$-knot  is $\cP$-isotopic   to a $\D$-normal knot.

\subsection{Face matrix}\lbl{sec.51} Let $\D$ be a triangulation of generalized marked surface $\SM$ and $\tau\in \cF(\D)$, i.e. $\tau$ is a triangle of $\D$.
We define a anti-symmetric matrix $Q_\tau\in \Mat(\D\times \D,\BZ)$ as follows. If $\tau$ is a self-folded triangle, then let $Q_\tau$ be the 0 matrix.
If  $\tau$ is not self-folded and hence has 3 distinct edges $a,b,c$ in counterclockwise order (see  Figure \ref{fig:triangle1}), then define
 \begin{align*}
  Q_\tau(a,b)& =Q_\tau(b,c)=Q_\tau(c,a)=1 \\
  Q_\tau(e,e')& =0 \quad \text{if  one of $e,e'$ is not in $\{a,b,c\}$ }.
 \end{align*}
In other words,    $Q_\tau\in \Mat(\D \times \D,\BZ)$ is the 0-extension of the following $\{a,b,c\}
\times \{a,b,c\}$ matrix 
 \be
 \lbl{eq.Qtau}
 \begin{pmatrix}
 0 & 1 & -1 \\
 -1 &0 & 1 \\
 1 & -1 &0
 \end{pmatrix}.
 \ee

\def\oD{{\mathring {\D}}}

Define the {\em face matrix} $Q=Q_\D \in \Mat(\D \times \D, \BZ)$ by
$$ Q = \sum_{\tau\in \cF(\D)} Q_\tau.$$

\begin{lemma}\lbl{r.Qtau0}  Suppose
 $\tau\in \cF(\D)$ has edges $a,b,c$ as in Figure \ref{fig:triangle1} and $\bk\in \BZ^\D$. Then
\be
\lbl{eq.Qtau0}
(\bk Q_\tau)(c) = \bk(b) - \bk(a)
\ee
\end{lemma}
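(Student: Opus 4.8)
The plan is simply to unwind the definition of $Q_\tau$ and carry out the one-line matrix computation; there is no real obstacle here beyond keeping track of signs, so I will present it as a bookkeeping exercise rather than dwelling on a "hard step".

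Recall that $\bk\in\BZ^\D$ is treated as a row vector, so $\bk Q_\tau\in\BZ^\D$ is again a row vector whose value at $c$ is the entrywise sum
\[
(\bk Q_\tau)(c)=\sum_{e\in\D}\bk(e)\,Q_\tau(e,c).
\]
Since the edges $a,b,c$ of $\tau$ are as in Figure \ref{fig:triangle1}, the triangle $\tau$ is not self-folded and $a,b,c$ are pairwise distinct elements of $\D$; hence $Q_\tau$ is exactly the $0$-extension of the $\{a,b,c\}\times\{a,b,c\}$ matrix in \eqref{eq.Qtau}. In particular $Q_\tau(e,c)=0$ whenever $e\notin\{a,b,c\}$, and also $Q_\tau(c,c)=0$, so in the displayed sum only the terms $e=a$ and $e=b$ survive.

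It then remains to read the two relevant entries off \eqref{eq.Qtau}: there $Q_\tau(a,c)=-1$ and $Q_\tau(b,c)=1$ (equivalently, $Q_\tau(c,a)=1$ together with antisymmetry). Substituting,
\[
(\bk Q_\tau)(c)=\bk(a)\,Q_\tau(a,c)+\bk(b)\,Q_\tau(b,c)=-\bk(a)+\bk(b),
\]
which is precisely \eqref{eq.Qtau0}. The companion identities $(\bk Q_\tau)(a)=\bk(c)-\bk(b)$ and $(\bk Q_\tau)(b)=\bk(a)-\bk(c)$ come out of the same computation after cyclically permuting $a,b,c$, since $Q_\tau$ is invariant under that cyclic relabelling.
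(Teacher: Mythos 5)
Your proof is correct and takes essentially the same approach as the paper, which simply says the identity ``follows immediately from the explicit form \eqref{eq.Qtau} of $Q_\tau$''; you have just written out the one-line matrix multiplication that the paper leaves implicit.
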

\begin{proof}
The proof follows immediately from the explicit form \eqref{eq.Qtau} of $Q_\tau$.
\end{proof}

\begin{remark}
Our $Q$ is the same as $Q^{\D}$ of \cite{Muller} or the same as $-B$ of \cite{FST}, and is also known as
the signed adjacency matrix. We use the terminology ``face matrix" to emphasize the duality with
the ``vertex matrix".
\end{remark}

\def\QQ{H}

\subsection{Marked surface and its vertex matrix} \lbl{sec.vmatrix} Assume $\SM$ is a triangulable marked surface. In particular, $\cP\subset \pS$. Let $\D$ be a triangulation of $\SM$.

For each edge $a\in \D$ choose  an interior point in $a$. Removing this interior point, from $a$ we get two {\em half-edges}, each  is incident to exactly one vertex in $\cP$.
Suppose $p\in \cP$ and $a',b'$ are two half-edges (of two different edges) incident to $p$. Define
$P_p(a',b')$ as in Figure \ref{fig:vmatrix}, i.e.
 $$P_p(a',b')= \begin{cases}
1  &\text{if $a'$ is clockwise to $b'$ (at vertex $p$)}\\
-1  &\text{if $a' $ is counter-clockwise to $b'$ (at vertex $p$)}.
\end{cases}
$$
\FIGc{vmatrix}{$P_p(a',b')=1$ for the left case, and $P_p(a',b')=-1$ for the right one. Here the shaded area is part of $\Sigma$, and the arrow edge is part of a boundary edge. There might be other half-edges incident to $p$, and they maybe inside and outside the angle between $a'$ and $b'$ }{1.5cm}

Also, if one of $a', b'$  is not incident to $p$, set $P_p(a',b')=0$.
Define the {\em vertex matrix} $P=P(\D)\in \Mat(\D\times \D, \BZ)$ by
$$ P(a,b)= \sum P_p(a',b'),$$
where the sum is over all $p\in \cP$, all half-edges $a'$ of $a$, and all half-edges $b'$ of $b$.

\def\thD{\theta_\D}
\def\aa{d}
\def\Rp{R_\partial}

\begin{remark}
 The fact that $\cP\subset \pS$ is crucial for the definition of the vertex matrix.
 The vertex matrix were first introduced in \cite{Muller}, where it is  called the orientation matrix.
\end{remark}

\subsection{Vertex matrix versus face matrix}\lbl{sec.dual}
The following relation between the face and the vertex matrices of a marked surface is important for us.

Recall that an edge $a\in \D$ is a {\em boundary edge} if it is a boundary $\cP$-arc, otherwise it is called an {\em
inner edge}. Let  $\oD$ be the set of all inner edges.
Let $\oQ $ be the $(\oD  \times \oD)$-submatrix of $Q$
and $\QQ$ be the $(\oD  \times \D)$-submatrix of $Q$.
\begin{lemma} \lbl{r.51a} Suppose $\SM$ is a marked surface with a triangulation $\D$.

(a) One has $\QQ P \QQ^\dag = -4 \oQ$.

(b) The rank of $\QQ$ is $|\oD |$.

\end{lemma}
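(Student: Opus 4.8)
The plan is to reduce both parts to a local computation over each triangle. Recall $Q = \sum_{\tau \in \cF(\D)} Q_\tau$, so $\QQ$, being the $(\oD \times \D)$-submatrix of $Q$, decomposes as $\QQ = \sum_\tau \QQ_\tau$ where $\QQ_\tau$ is the $(\oD \times \D)$-submatrix of $Q_\tau$. I would first write $\QQ P \QQ^\dag = \sum_{\tau, \tau'} \QQ_\tau P \QQ_{\tau'}^\dag$. The key observation is that $\QQ_\tau P \QQ_{\tau'}^\dag$ vanishes unless $\tau, \tau'$ share an inner edge, so the double sum collapses to contributions indexed by inner edges together with the triangles on either side. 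For a fixed inner edge $c \in \oD$, the entry $(\QQ P \QQ^\dag)(c, c')$ can be evaluated using Lemma \ref{r.Qtau0}: writing $\tau$ with counterclockwise edges $a, b, c$, we have $(\bk Q_\tau)(c) = \bk(b) - \bk(a)$, which identifies the rows of $\QQ_\tau$ as explicit $\pm 1$ combinations of the standard basis vectors dual to the edges of $\tau$. The whole computation then becomes: for each inner edge $c$, sum over its (at most two) adjacent triangles the contribution $\sum_{p} \pm P_p(\cdot, \cdot)$, where $p$ runs over endpoints of $c$.

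For part (a), the heart of the matter is to check, edge by edge, that these local sums produce exactly $-4$ on the diagonal entry $(c,c)$ of $\oQ$ when... wait, $\oQ$ is antisymmetric so its diagonal is zero; rather, one checks $(\QQ P \QQ^\dag)(c,c') = -4\, \oQ(c,c')$ for all inner $c, c'$. The nonzero off-diagonal entries $\oQ(c,c') = \pm 1$ arise precisely when $c$ and $c'$ are two edges of a common triangle $\tau$; for such a pair one traces through the definition of $P_p$ (Figure \ref{fig:vmatrix}) at the shared vertex (or vertices) of $c$ and $c'$ and checks the sign and the factor $4$ come out correctly. When $c = c'$, the contributions from the two endpoints of $c$ and the two adjacent triangles must cancel. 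This is the routine but somewhat delicate bookkeeping step, and it is where I expect the main obstacle to lie: one must handle carefully the cases where $c$ bounds a self-folded triangle, where the two endpoints of $c$ coincide, or where $c$'s two sides lie in the same triangle, since in those degenerate configurations several terms in the local sum collapse onto the same half-edge pair and the naive count can go wrong. The cleanest way to organize this is probably to expand $P = \sum_p P_p$ and $Q = \sum_\tau Q_\tau$ simultaneously and show that $\QQ P \QQ^\dag + 4\oQ$ is a sum of local terms each supported near a single vertex, then verify the vanishing of each local term by a finite check on the star of that vertex.

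For part (b), once part (a) is established the rank statement is almost immediate: if some nonzero $\bk \in \BZ^{\oD}$ (extended by zero, or rather viewed through the inclusion $\oD \hookrightarrow \D$) satisfied $\bk \QQ = 0$, then a fortiori $\bk \QQ P \QQ^\dag \bk^\dag = 0$, hence $\bk \oQ \bk^\dag = 0$; but this alone does not force $\bk = 0$ since $\oQ$ is antisymmetric. Instead I would argue directly: $\oQ$ is the $(\oD \times \oD)$-submatrix of $\QQ$ (taking only the inner-edge columns), and it suffices to show $\oQ$ — equivalently $-B$ in the notation of \cite{FST} — is nonsingular, or more robustly, to exhibit for each inner edge a vector detecting it. Actually the simplest route: $\QQ$ has $|\oD|$ rows, so $\rk \QQ \le |\oD|$; for the reverse inequality, note that part (a) gives $\QQ P \QQ^\dag = -4 \oQ$, so $\rk \QQ \ge \rk(\QQ P \QQ^\dag) = \rk \oQ$, and it remains to check $\rk \oQ = |\oD|$, i.e. that the signed adjacency matrix of the inner edges is nonsingular; this is a known fact about triangulations of surfaces (the face matrix restricted to inner edges has full rank, cf. \cite{Muller,FST}) and can be cited, or proved by a spanning-tree / Euler-characteristic argument if a self-contained proof is wanted. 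I expect part (b) to be short modulo this citation, so the real work — and the only genuine obstacle — is the local sign/coefficient verification in part (a).
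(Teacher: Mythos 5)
The paper's proof is much shorter than your plan: it invokes a \emph{stronger} identity from Muller, namely $P\QQ^\dag = -4\,\id_{\D\times\oD}$ (\cite[Proposition 7.8]{Muller}), from which part (a) follows by left-multiplying by $\QQ$, and part (b) follows because $\id_{\D\times\oD}$ visibly has rank $|\oD|$, forcing $\rk(\QQ)\geq|\oD|$. You instead attempt to establish only the symmetrized identity $\QQ P\QQ^\dag=-4\oQ$ directly, by expanding $\QQ=\sum_\tau\QQ_\tau$ and $P=\sum_p P_p$ and doing local bookkeeping. That computation is plausible in spirit, but you never carry it out, and in any case it is strictly weaker than what the paper uses.

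The real problem is part (b). Your final route is: $\rk(\QQ)\leq|\oD|$, and $\rk(\QQ)\geq\rk(\QQ P\QQ^\dag)=\rk(\oQ)$, so it "remains to check $\rk(\oQ)=|\oD|$." This last claim is \emph{false} in general. Since $\oQ$ is antisymmetric over $\BZ$, its rank over $\BQ$ is always even, so it cannot equal $|\oD|$ whenever $|\oD|$ is odd. A concrete failure: take $\Sigma$ a disk with four marked points on $\partial\Sigma$, triangulated by one diagonal $a$. Then $\oD=\{a\}$, and $\oQ=Q(a,a)=0$ is the $1\times1$ zero matrix, so $\rk(\oQ)=0\neq 1=|\oD|$. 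Thus the signed adjacency matrix restricted to inner edges is typically \emph{not} nonsingular, and the inequality $\rk(\QQ)\geq\rk(\oQ)$ obtained from part (a) alone cannot yield $\rk(\QQ)=|\oD|$. This is precisely why the paper works with the unsymmetrized identity $P\QQ^\dag=-4\,\id_{\D\times\oD}$: the right-hand side there has full rank $|\oD|$ and part (b) drops out. You should either prove or cite that stronger identity (it is what Muller actually proves), rather than the weaker contraction $\QQ P\QQ^\dag=-4\oQ$, if you want both parts.
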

\begin{proof}
(a) Let $\id_{\D \times \oD } \in \Mat(\D \times \oD , \BZ)$ be the matrix which has 1 on the main
diagonal, and 0 everywhere else, i.e. $\id_{\D \times \oD }(a,b)=\delta_{a,b}$. By \cite[Proposition
7.8]{Muller},
\be
\lbl{eq.61}
P\QQ^\dag= - 4 \id_{\D \times \oD }.
\ee
Hence $\QQ P \QQ^\dag = -4 \QQ \, \id_{\D \times \oD } = -4 \oQ$.

(b) Since $\QQ$ has $|\oD  |$ rows, $\rk(\QQ) \le |\oD |$. Because $\rk(\id_{\D \times \oD }) = |\oD |$,
Equation \eqref{eq.61} shows that
$\rk(\QQ) \ge |\oD |$. Hence $\rk(\QQ) = |\oD |$.
\end{proof}

\section{Skein algebra of marked surfaces}
\lbl{sec:markedsur}

Throughout this section we fix a marked surface $\SM$.

\subsection{Skein module of  marked surface} Let $M$ be the cylinder over $\Sigma$ and $\cN$ the cylinder over $\cP$, i.e. $M= \Sigma \times (-1,1)$
and $\cN= \cP \times (-1,1)$.
 We consider $\MN$ as a marked 3-manifold, where the orientation on each component of $\cN$  is given by
 the natural orientation of $(-1,1)$. We  identify $\Sigma$
 with $\Sigma \times \{0\} \subset M$. There is a vertical projection $\pr:M \to \Sigma$, mapping $(z,t)$ to $z$. The number $t$ is called the {\em height} of $(z,t)$. The {\em vertical vector} at $(z,t)\in \Sigma \times (-1,1)$ is the unit vector tangent to $z\times (-1,1)$ and having direction the positive orientation of $(-1,1)$.

Define $\cS\SM:= \cS\MN$. Since we fix $\SM$, we will denote $\cS$ for $\cS\SM$ in this section.

Suppose $\al\subset  \Sigma$ is a $\cP$-link. We will define  $[\al]\in \cS$ as follows.
  Let $\al'\subset \Sigma \times (-1,1)$ be an $\cN$-link such that
  \begin{itemize}
  \item[(i)]  $\pr(\al')=\al$,  the framing of $\al'$ is vertical everywhere, and
  \item[(ii)] for every $p\in \cP$, if $a_1,\dots, a_{k_p}$ are strands of $\al$ (in a small neighborhood of $p$) coming to $p$ in clockwise order, and $a_1',\dots, a'_{k_p}$ are the strands of $\al'$ projecting correspondingly onto $a_1,\dots, a_{k_p}$, then the height of $a_i'$ is greater than that of $a'_{i+1}$ for $i=1, \dots, k_p-1$. See an example with $k_p=3$ in Figure \ref{fig:simul}.
  \end{itemize}
  \FIGc{simul}{Left: There are 3 strands $a_1, a_2, a_3$ of $\al$ coming to $p$, ordered clockwise. Right: The corresponding strands $a'_1, a'_2, a'_3$ of $\al'$, with $a'_1$ above $a'_2$, and $a'_2$ above $a'_3$.}{2cm}

  It is clear that the $\cN$-isotopy class of $\al'$ is determined by $\al$. Define $\al$ as an element in $\cS$ by
  \be
  \lbl{eq.simul}
   [\al] = q^{\frac 14 \sum_{p\in \cP} (k_p-1)(k_p-2)} \al' .
   \ee

The factor which is a power of $q$ on the right hand side is introduced so that $[\al]$ is invariant under a certain transformation, see Section \ref{sec.reflection}.
By \cite[Lemma 4.1]{Muller}, we have the following fact, which had been known for unmarked surface~\cite{PS}.

\begin{proposition}[\cite{Muller}]
\lbl{r.basis} As an $\cR$-module, $\cS$ is a free  with basis
the set of all $[\al]$, where $\al$ runs the set of all $\cP$-isotopy classes of essential $\cP$-links.
\end{proposition}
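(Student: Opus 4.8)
The plan is to deduce this from the corresponding statement in Muller's paper, translating his link-diagram setup into the 3-manifold language used here. First I would recall that Muller \cite[Lemma 4.1]{Muller} defines $\cS$ using $\cP$-link diagrams in $\Sigma$ modulo the Kauffman skein relation, the trivial loop relation, the trivial arc relation, \emph{and} a reordering relation at each marked point, and he proves that the resulting module is free with basis the essential $\cP$-links (the diagrams with no crossings and no trivial components). So the real content is to check that our $\cS\SM = \cS\MN$, defined via $\cN$-links in the cylinder $M = \Sigma\times(-1,1)$, is isomorphic as an $\cR$-module to Muller's module in a way that carries the classes $[\al]$ of \eqref{eq.simul} to Muller's basis elements.

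The key steps, in order. (1) Define a map from $\cN$-links in $M$ to $\cP$-link diagrams in $\Sigma$ by vertical projection $\pr$, after a generic isotopy making the projection an immersion with only transverse double points away from $\cP$ and with the strand heights at each $p\in\cP$ recording a linear order; record that order as the over/under data of a ``diagram'' and as the reordering decoration at $p$. (2) Check this is well defined up to Muller's relations: a generic isotopy of the $\cN$-link changes the diagram by planar isotopy, Reidemeister-type moves (which, combined with the skein relation, the trivial loop relation and the trivial arc relation, are exactly the relations Muller imposes), and moves that permute the heights of strands at a marked point — and by Proposition \ref{r.boundary} (the reordering relation, which holds in $\cS\MN$ because of the third dimension) these are precisely Muller's reordering relations. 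Conversely a $\cP$-link diagram lifts to an $\cN$-link by pushing strands to distinct heights according to the crossing and reordering data, giving an inverse map. (3) Conclude that $\cS\MN$ is canonically isomorphic to Muller's module; under this isomorphism, the normalization factor $q^{\frac14\sum_p(k_p-1)(k_p-2)}$ in \eqref{eq.simul} matches Muller's normalization of the basis elements so that the element $[\al]$ attached to an essential $\cP$-link $\al$ maps to Muller's basis element indexed by $\al$. Freeness with the asserted basis then follows from \cite[Lemma 4.1]{Muller}.

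The main obstacle I expect is step (2): one must verify carefully that the \emph{only} extra relation needed, beyond planar isotopy and the diagrammatic versions of skein/trivial-loop/trivial-arc, is the reordering relation at marked points — i.e., that isotopies of $\cN$-links in the cylinder project to exactly the moves in Muller's presentation and no more. This is a standard ``links in a thickened surface $\leftrightarrow$ link diagrams'' argument, but here the boundary condition at $\cN$ (the framing must be tangent to $\cN$ and oriented compatibly) and the bookkeeping of strand heights near $\cP$ require care; the crucial point that makes it work is exactly the remark following Proposition \ref{r.boundary}, namely that in the 3-manifold the reordering relation is forced by topology rather than imposed, so the two presentations do agree. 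Once this identification is in place, invoking \cite[Lemma 4.1]{Muller} finishes the proof; this is why the statement is attributed to \cite{Muller}.
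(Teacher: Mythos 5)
Your proposal matches the paper's approach: the paper simply cites \cite[Lemma 4.1]{Muller} and, in the remark following Proposition \ref{r.boundary}, points out that the 3-manifold definition of $\cS\MN$ agrees with Muller's diagrammatic one because the reordering relation is automatic in the thickened surface — exactly the bridge you spell out. The paper also notes an alternative Diamond-Lemma proof as in \cite{SikoraW}, but the route taken is the same citation-to-Muller you propose.
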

A concise and simple proof of this fact can be obtained using the Diamond Lemma as in \cite{SikoraW}.
We will consider $\cS$ as a based $\cR$-module with the preferred base described by Proposition \ref{r.basis}. In what follows we often use the same notation, say $\al$, to denote a $\cP$-link and the  element $[\al]$ of $\cS$ when there is no confusion.

\subsection{Algebra structure and reflection anti-involution}\lbl{sec.reflection}
For $\cN$-links $\al_1, \al_2$ in $M=\Sigma \times (-1,1)$, considered as elements of $\cS$, define the product
 $\al_1 \al_2$ as the result of stacking $\al_1$  atop  $\al_2$ using the cylinder structure of $\MN$. Precisely
 this means the following. Let $\iota_1: \Sigma \times (-1,1) \embed \Sigma \times (-1,1)$ be the
 embedding $\iota_1(x,t)= (x, \frac{t+1}2)$ and $\iota_2: \Sigma \times (-1,1) \embed \Sigma \times
 (-1,1)$ be the embedding
 $\iota_1(x,t)= (x, \frac{t-1}2)$. Then $\al_1 \al_2:= \iota_1(\al_1) \cup \iota_2(\al_2)$.
This product makes $\cS$ an $\cR$-algebra, which is non-commutative in general.

Let $\chi: \cS \to \cS$ be the bar homomorphism of \cite{Muller}, which is  the $\BZ$-algebra
anti-homomorphism defined by (i)
$ \chi(q^{1/8})= q^{-1/8}$ and (ii) $\chi(L)$ is the reflection image of $L$ for any $\cN$-link $L$ in $\Sigma
\times (-1,1)$. Here the reflection is the map $(z,t)\to (z, -t)$ of $\Sigma \times (-1,1)$. It is clear that
$\chi$ is an anti-involution. An element $\al\in \cS\SM$ is {\em reflection invariant} if $\chi(\al)=\al$. From the reordering relation (Proposition \ref{r.boundary}) one can easily show that $[\al]$ is reflection-invariant for any $\cP$-link $\al$.

  \subsection{Functoriality} Let $(\Sigma',\cP')$ be a marked surface such that $\Sigma'\subset \Sigma$
  and $\cP'\subset \cP$. The embedding $\iota:\Sigma'\embed \Sigma$ induces an $\cR$-algebra homomorphism
  $\iota_*:\cS(\Sigma', \cP')\to \cS\SM$.

   If $\Sigma' = \Sigma$, then $\iota_*:\cS(\Sigma, \cP')\to \cS\SM$ is injective, because the preferred
   basis of $\cS(\Sigma, \cP')$ is a subset of that of $\cS\SM$.

  In particular, the natural $\cR$-algebra homomorphism $\iota_*:\ooS \to \cS$ is injective, where $\ooS=
  \cS(\Sigma, \emptyset)$. We will always identify $\ooS$ with a subset of $\cS$ via $\iota_*$.

\def\Rp{\cR}
\subsection{Muller's algebra: quantum torus associated to vertex matrix} Suppose $\SM$ has a triangulation
$\D$.
 By definition, each $a\in \Delta$ is an $\cN$-arc (with vertical framing), and we consider $a$ as an
 element of the skein algebra $\cS$. From the reordering relation (Proposition \ref{r.boundary}) we see
 that for each pair $a,b\in \D$ one has
\be
\lbl{eq.35}
ab = q^{P(a,b)} ba,
\ee
where $P\in \Mat(\D\times \D,\BZ)$ is the vertex matrix (see Subsection \ref{sec.vmatrix}).
It is the $q$-commutativity of edges of $\D$, equation \eqref{eq.35},  that leads to the introduction of the vertex matrix in \cite{Muller}.

The {\em Muller algebra} $\sX(\D)$ is defined to be the quantum torus $\bT(P,q,X)$, i.e.
$$ \sX(\D) = \cR\la X_a^{\pm 1}, a \in \D \ra /(X_a X_b = q^{P(a,b)} X_b X_a).$$
Denote by $\tiX(\D)$ the skew field of $\sX(\D)$.
Recall that $\XD$ is a based $\cR$-module with preferred basis $\{ X^\bk \mid \bk \in \BZ^\D\}$. Let
$\XppD$ be the $\cR$-submodule of $\XD$ spanned by $X^\bk$ with $\bk\in \BN^\D$, i.e. $\bk(a) \ge 0$ for
all $a\in \D$. Then $\XppD$ is an $\cR$-subalgebra of $\XD$.

\def\thD{\phi_\D}
\def\fM{\mathfrak M}
Relation \eqref{eq.35} shows that there is a unique algebra homomorphism
\be
\lbl{eq.thD}
\thD: \XppD \to \cS, \quad \text{defined by $\thD(a)= X_a$}.
\ee
 For $\bk\in \BN^\D$, the image $\D^\bk:=\thD(X^\bk)$ has a transparent geometric description. In fact, as
 observed in \cite{Muller},
 $\D^\bk$ is a $\cP$-link consisting of $\bk(a)$ copies of $a$ for every $a\in \D$. Here each copy of $a$,
 by definition, is a $\cP$-arc $\cP$-isotopic   to $a$ in $\Sigma \setminus \left( \cup_{b \in \D
 \setminus\{a\}}b \right)$.
This gives a nice geometric interpretation of the Weyl normalization.

The following is one of the main results of \cite{Muller}.
\begin{theorem}[Muller]
\lbl{r.Muller}

(i) The homomorphism $\thD$ in \eqref{eq.thD} is injective.

(ii) There is a unique injective algebra homomorphism $\vpD:\cS \embed \sX(\D)$ such that $\vpD \circ
\thD$ is the identity on $\XppD$. In other words, the combination
\be
\lbl{eq.incl1}
  \XppD \overset{\thD} \lembed \cS \overset{\vpD} \lembed \XD
  \ee
is the natural embedding $\XppD \embed \XD$.
Besides, $\vpD$ is reflection invariant, i.e. $\vpD$ commutes with $\chi$.
\end{theorem}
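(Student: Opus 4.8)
\emph{Overview, and Part (i).} The plan is to prove (i) by a direct linear‑independence argument and (ii) by building $\vpD$ through a ``clear‑denominators / leading‑term'' recursion against the triangulation. For (i): being an $\cR$‑algebra homomorphism, $\thD$ is $\cR$‑linear, and on the $\cR$‑basis $\{X^\bk\mid\bk\in\BN^\D\}$ of $\XppD$ it is given, by the geometric description of the Weyl normalization recalled above, by $\thD(X^\bk)=\D^\bk$, the essential $\cP$‑link of $\bk(a)$ parallel copies of each edge $a\in\D$. Since a $\cP$‑triangulation has no two $\cP$‑isotopic edges, the $\cP$‑isotopy class of $\D^\bk$ recovers $\bk$, so the $\D^\bk$ are pairwise distinct members of the preferred basis of $\cS$ (Proposition \ref{r.basis}); an $\cR$‑linear map carrying one basis injectively into another is injective, so $\thD$ is injective. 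This will be used in (ii) to make sense of $\thD^{-1}$ on its image.

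\emph{Part (ii): a filtration and the reduction step.} I would set $E_\D=\bigcup_{a\in\D}a$ and, for an essential $\cP$‑link $\al$, $d(\al)=\sum_{a\in\D}\mu(\al,a)\in\BN$, with $F_n\cS$ the $\cR$‑span of the $[\al]$ having $d(\al)\le n$. The base case $d(\al)=0$ forces $\al$ to miss $E_\D$, so each component of $\al$ lies in the closure of a single triangle and is either trivial (excluded by essentiality) or $\cP$‑isotopic to an edge; hence $\al=\D^\bk$ for a unique $\bk$ and $F_0\cS=\thD(\XppD)$. For the inductive step, given $d(\al)>0$ I would pick an edge $e$ with $m:=\mu(\al,e)>0$, put $\al$ in $\D$‑normal position, stack the edge $e\in\cS$ directly over $\al$, resolve the $m$ resulting crossings by the Kauffman skein relation, discard the summands containing a trivial arc (trivial‑arc relation) and absorb trivial loops (trivial‑loop relation); this yields an identity $e\cdot[\al]=\sum_j c_j[\al_j]$ in $\cS$ with $c_j\in\cR$ and $d(\al_j)<d(\al)$, the strict drop holding because in each resulting diagram the reconnected strands run parallel to sub‑arcs of $e$ and so meet $E_\D$ only in the $d(\al)-m$ points of $\al\cap(E_\D\setminus e)$.

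\emph{Part (ii): construction, uniqueness, reflection invariance.} Iterating the reduction produces, for each essential $\cP$‑link $\al$, an identity $y_\al\cdot[\al]=z_\al$ in $\cS$ with $y_\al$ a normalized product of edges and $z_\al\in F_0\cS=\thD(\XppD)$. Then I define $\vpD(\al):=X_\al^{-1}\,\thD^{-1}(z_\al)\in\XD$, where $X_\al:=\thD^{-1}(y_\al)$ is a monomial and hence a unit of $\XD$, and extend $\cR$‑linearly. For $\al=\D^\bk$ one has $y_\al=1$, $z_\al=\D^\bk$, so $\vpD(\D^\bk)=X^\bk$; thus $\vpD\circ\thD=\id_{\XppD}$, which is the required relation. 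Injectivity of $\vpD$ follows from triangularity for the $d$‑filtration: the leading part of $\vpD([\al])$ modulo lower $d$ is a nonzero $\cR$‑multiple of a single normalized monomial $X^{\bn(\al)}$, with $[\al]\mapsto\bn(\al)$ injective on the basis. Uniqueness is immediate from the displayed identity: any algebra homomorphism $f\colon\cS\to\XD$ with $f\circ\thD=\id$ satisfies $f(a)=X_a$ on each edge $a$, whence $\thD^{-1}(y_\al)\,f([\al])=\thD^{-1}(z_\al)$ and $f([\al])=\vpD(\al)$. Reflection invariance is then formal: $\chi_\cS$ fixes every basis element $[\al]$ and $\chi_{\XD}$ fixes every $X^\bk$, so $g:=\chi_{\XD}\circ\vpD\circ\chi_\cS$ is again an $\cR$‑algebra homomorphism with $g\circ\thD=\id_{\XppD}$; by uniqueness $g=\vpD$, i.e. $\vpD\circ\chi_\cS=\chi_{\XD}\circ\vpD$.

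\emph{The main obstacle.} Two points in the above are not formal, and I expect them to be the real work. First, the reduction must be shown to be confluent: although a single expansion $e\cdot[\al]=\sum_j c_j[\al_j]$ is unambiguous once $e$ is chosen, iterating with different sequences of edge choices must produce the same $\vpD(\al)$ — a Diamond‑Lemma argument in the spirit of \cite{SikoraW,Muller}, organized by the partial order coming from $d$. Second, one must verify that $\vpD$ respects the defining relations of $\cS$, equivalently that $\vpD(\al_1\al_2)=\vpD(\al_1)\vpD(\al_2)$; after the standard reductions this comes down to finitely many local identities in a neighborhood of a single triangle together with the reordering relation (Proposition \ref{r.boundary}). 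These are precisely the computations carried out in \cite{Muller}; the confluence and multiplicativity checks are the step I expect to be the main obstacle, while everything else is bookkeeping inside the quantum torus $\XD$.
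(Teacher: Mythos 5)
The paper does not prove this statement; it records it as ``one of the main results of \cite{Muller}'' and uses it as a black box, so there is no in-paper argument to compare against. Judged on its own merits, your sketch follows the same strategy as Muller's proof: (i) via the identification $\thD(X^\bk)=\D^\bk$ and the fact that distinct $\bk$ yield non-$\cP$-isotopic elements of the preferred basis of $\cS$; (ii) via a filtration by $d(\al)=\sum_{a\in\D}\mu(\al,a)$, a skein reduction $e\cdot[\al]=\sum_j c_j[\al_j]$ that strictly decreases $d$, clearing denominators by a monomial in the edges (this is exactly \cite[Cor.~6.9]{Muller}, which the paper cites later), a leading-term argument for injectivity, and uniqueness plus reflection-invariance by the neat ``conjugate by $\chi$ and invoke uniqueness'' trick. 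Part (i), the uniqueness argument, and the reflection-invariance argument are complete and correct as written.

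Two remarks on the parts you flag as the ``main obstacle.'' First, what you call confluence is actually the easier of your two worries: once one knows that for every $\al$ some monomial $M$ has $M\al\in\thD(\XppD)$, independence of the choice of $M$ is a two-line calculation, since if $M_1\al=\thD(z_1)$, $M_2\al=\thD(z_2)$ and $M_1M_2=q^rM_2M_1$, then $X_1z_2=q^rX_2z_1$ and hence $X_1^{-1}z_1=X_2^{-1}z_2$ in $\XD$; no Diamond-Lemma confluence for the rewriting system is needed for well-definedness of the value. Second, the genuine work you are deferring is (a) multiplicativity of $\vpD$, which amounts to the two-sided Ore property for the multiplicative set of edge monomials inside $\cS$ (so that the formula $\vpD(\al)=X_M^{-1}\thD^{-1}(M\al)$ is compatible with products), and (b) the leading-term claim, i.e.\ that after clearing denominators the expansion of $\thD^{-1}(X^{\bk_\al}\al)$ has a single dominant normalized monomial and that $\al\mapsto$ (that monomial) is injective; both are precisely Muller's ``compatible filtration'' and leading-term lemmas, and both require genuine geometric input about $\D$-normal diagrams, not just bookkeeping. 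Your reduction-step claim that $d(\al_j)<d(\al)$ is stated correctly, but it deserves a sentence making explicit that after smoothing, the arcs parallel to $e$ meet no edge of $\D$ transversally and each smoothing removes one intersection with $e$ while adding none elsewhere, so that $|L_j\cap E_\D|=d(\al)-m$ before normalization. With those caveats, the outline is faithful to \cite{Muller}.
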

We will call $\vpD$ the {\em skein coordinate map of $\cS$ associated to the triangulation} $\D$.
The skein coordinates,  in the classical case $q=1$, correspond to the
 Penner coordinates on decorated Teichm\"uller spaces  \cite{Penner}.
We will identify $\cS$ with a subset of $\XD$ via the embedding $\vpD$. Then $\cS$ is sandwiched between
$\XppD$ and $\XD$.

While  $\cS$ is a complicated algebra, $\XD$ has a simple algebraic structure. Being a subring of a
two-sided Noetherian domain $\XD$, $\cS$ is a two-sided Noetherian domain, and hence a two-sided Ore
domain, see \cite{GW}. It follows that the skew field $\tilde \cS$  of $\cS$ exists.
The inclusions in~\eqref{eq.incl1} shows that $\tiX(\D) = \tilde \cS$.
The inclusions in \eqref{eq.incl1} also show that $\cS$ is an essential subalgebra of $\XD$ in the sense
that every algebra homomophism from $\XD$ to another algebra is totally determined by it restriction on
$\cS$.

\def\XhD{\sX^{(\frac 12)}(\D)}
 Let $\XhD$ be the quantum torus $\bT(P,q^{1/4},x)$, which has
basis variables $x_a, a \in \D$. We consider $\XD$ as a $\Rp$-subalgebra of $\XhD$ by setting
$X_a=(x_a)^2$.

\begin{remark}
In  \cite{Le:tocome} we  extend  Theorem \ref{r.Muller} to the case when the marked surfaces have interior
marked points (or punctures), or some of boundary components of $\Sigma$ do not have marked points. The
advantage of having boundary components without marked points is that we can build a surgery theory, and
can alter the topology of the surfaces.
\end{remark}

\subsection{Flips of  triangulations} Let us recall the flip of a triangulation at an inner edge. Suppose
$\D$ is a  triangulation  of $\SM$ and $a$ is an inner edge of $\D$. There is a unique (up to $\cP$-isotopy)
$\cP$-arc $a^*$ different from $a$ such that $\D' = \D \setminus \{a\} \cup \{ a^*\}$ is a  triangulation,
and we call $\D'$
{\em the flip of $\D$ at $a$}.
\FIGc{mutation}{Flip at $a$.}{2cm}

One can obtain $a^*$ is as follows. The two triangles, each  having $a$ as an edge, together form a $\cP$-quadrilateral, with $a$ being one of its two diagonals, see Figure \ref{fig:mutation}.
Then $a^*$ is the other diagonal. The  edges $b,c,d,e$ in Figure \ref{fig:mutation} are not necessarily pairwise distinct. If they are
not, then either $b=d$ or $c=e$ (but not both) as  all other cases are excluded because $\cP\subset \pS$.

It is known that for any two  triangulations are related by a sequence of flips \cite{FST}.

\subsection{Coordinate change} Suppose $\D, \D'$ are two  triangulations of $(\Sigma, \cP)$. We have the
following algebra isomorphisms (skein coordinate maps)
$$ \vpD: \ttS \overset \cong \longrightarrow \tiX(\D), \quad \vp_{\D'}: \ttS \overset \cong
\longrightarrow \tiX(\D').$$
The {\em coordinate change map} $ \Phi_{\D,\D'}: \tiX(\D') \to \tiX(\D)$ is defined by
$\Phi_{\D,\D'}:= \vpD \circ (\vp_{\D'})^{-1}$, which is an $\cR$-algebra isomorphism.

\begin{proposition} \lbl{r.42}

(a) The coordinate change isomorphism $\Phi_{\D,\D'}$ is natural. This means,
 $$\Phi_{\D, \D}= \Id, \quad \Phi_{\D, \D''} = \Phi_{\D, \D'} \circ \Phi_{\D', \D''}.$$

 (b) The maps $\vp_\D: \cS \embed \tiX(\D)$ commute with the coordinate change  maps, i.e.
$$ \vp_{\D} = \Phi_{\D,\D'}  \circ \vp_{\D'}.$$

 (c) Suppose $\D'$ is obtained from $\D$ by a flip at an edge  $a$, with $a$ replaced by $a^*$ as in
 Figure \ref{fig:mutation}. Then $\Phi_{\Delta\Delta'}(X_v)= X_v$ for any $v \in \D \setminus \{a^*\}$
 and, with notations of edges as in Figure \ref{fig:mutation},
\be
\lbl{eq.aa1}
 \Phi_{\Delta\Delta'}(X_{a*}) =    [X_cX_e X_{a}^{-1}] + [ X_bX_dX_{a}^{-1}].
 \ee

\end{proposition}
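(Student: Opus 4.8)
The plan is to dispose of (a) and (b) formally and to reduce (c) to a single application of the Kauffman skein relation, whose ambiguous powers of $q$ are then pinned down by reflection symmetry. Parts (a) and (b) need no geometry: each $\varphi_\Delta$ is an isomorphism of skew fields, so $\Phi_{\Delta\Delta}=\varphi_\Delta\circ(\varphi_\Delta)^{-1}=\Id$; inserting $\Id=(\varphi_{\Delta'})^{-1}\circ\varphi_{\Delta'}$ into $\Phi_{\Delta\Delta''}=\varphi_\Delta\circ(\varphi_{\Delta''})^{-1}$ gives $\Phi_{\Delta\Delta''}=\Phi_{\Delta\Delta'}\circ\Phi_{\Delta'\Delta''}$, proving (a); and $\Phi_{\Delta\Delta'}\circ\varphi_{\Delta'}=\varphi_\Delta\circ(\varphi_{\Delta'})^{-1}\circ\varphi_{\Delta'}=\varphi_\Delta$ proves (b).

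For (c) I would first record the consequence of Theorem \ref{r.Muller} that $\varphi_\Delta\circ\phi_\Delta$ is the inclusion $\XppD\hookrightarrow\sX(\Delta)$, so that $\varphi_\Delta(\Delta^\bk)=X^\bk$ for every $\bk\in\BN^\Delta$; in particular $\varphi_\Delta([v])=X_v$ for each edge $v\in\Delta$, and likewise $\varphi_{\Delta'}([v])=X_v$ for $v\in\Delta'$. Hence, if $v$ is an edge lying in both $\Delta$ and $\Delta'$ (equivalently $v\ne a,a^*$), then $(\varphi_{\Delta'})^{-1}(X_v)=[v]$ and $\varphi_\Delta([v])=X_v$, so $\Phi_{\Delta\Delta'}(X_v)=X_v$. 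The remaining assertion is $\Phi_{\Delta\Delta'}(X_{a^*})=\varphi_\Delta\bigl((\varphi_{\Delta'})^{-1}(X_{a^*})\bigr)=\varphi_\Delta([a^*])$, so everything reduces to expressing the single $\cP$-arc $a^*$ inside the Muller algebra $\sX(\Delta)$.

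To do that I would compute in $\cS$. The two diagonals $a$ and $a^*$ of the $\cP$-quadrilateral of Figure \ref{fig:mutation} meet transversally in exactly one point of $\Sigma\setminus\cP$, so the product $[a^*]\,[a]$ (stack $a^*$ above $a$) is represented by an $\cN$-link with a single crossing; resolving that crossing by the skein relation, the two Kauffman smoothings are, up to $\cP$-isotopy and a power of $q$ absorbing the height and framing normalization of \eqref{eq.simul}, a copy of $b$ together with a copy of $d$ in one case and a copy of $c$ together with a copy of $e$ in the other, where $b,c,d,e$ are the four sides of the quadrilateral labelled as in Figure \ref{fig:mutation}. Applying the algebra homomorphism $\varphi_\Delta$ and using $\varphi_\Delta([a])=X_a$ then gives
\[
\varphi_\Delta([a^*])\,X_a\;=\;q^{s_1}\,[X_bX_d]\;+\;q^{s_2}\,[X_cX_e]
\]
for some $s_1,s_2\in\BQ$; right-multiplying by the unit $X_a^{-1}$ of $\sX(\Delta)$ and renormalizing,
\[
\varphi_\Delta([a^*])\;=\;q^{r_1}\,[X_bX_dX_a^{-1}]\;+\;q^{r_2}\,[X_cX_eX_a^{-1}]
\]
for some $r_1,r_2\in\BQ$.

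It remains to see that $r_1=r_2=0$, which is where reflection symmetry enters. The element $[a^*]$ is reflection invariant (Section \ref{sec.reflection}) and $\varphi_\Delta$ commutes with the reflection anti-involution $\chi$ (Theorem \ref{r.Muller}), so $\varphi_\Delta([a^*])$ is reflection invariant; moreover the two normalized monomials $[X_bX_dX_a^{-1}]$ and $[X_cX_eX_a^{-1}]$ are distinct, since the multisets $\{b,d\}$ and $\{c,e\}$ differ (by the remark after Figure \ref{fig:mutation} at most one of $b=d$, $c=e$ can occur, and no edge is repeated within a triangle because $\cP\subset\pS$ excludes self-folded triangles). Lemma \ref{r.reflection} then forces $r_1=r_2=0$, which is \eqref{eq.aa1}. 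I expect the one genuinely delicate step to be the geometric identification of the two resolutions of the crossing of $a$ and $a^*$ with parallel copies of the sides of the quadrilateral, in particular the degenerate gluings $b=d$ or $c=e$ where two triangles share two edges; the power-of-$q$ bookkeeping that makes the original construction in \cite{BW0} laborious is here handled automatically by Lemma \ref{r.reflection}.
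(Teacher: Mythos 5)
Your proof is correct and follows essentially the same route as the paper: parts (a) and (b) are formal, and part (c) is obtained by resolving the single crossing of $a$ and $a^*$ via the skein relation, dividing by $a$, and then using reflection invariance (Lemma \ref{r.reflection}) together with the observation that $[X_bX_dX_a^{-1}]\neq[X_cX_eX_a^{-1}]$ (no self-folded triangles) to kill the unknown powers of $q$. The only cosmetic difference is that you stack $a^*$ above $a$ and cancel on the right, while the paper stacks $a$ above $a^*$ and cancels on the left.
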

\FIGc{proof.mutation}{Proof of \eqref{eq.aaa} }{2.2cm}
\begin{proof}
Parts (a) and (b) follow right away from the definition. Let us prove (c).  It is clear that
$\Phi_{\Delta\Delta'}(X_v)= X_v$ for any $v \in \D \setminus \{a^*\}$.
In $\cS$, using the skein relation (see Figure \ref{fig:proof.mutation}),  we have
$$
 a a^* = q ce  +   q^{-1} bd.
 $$

Multiplying $a^{-1}$ on the left,
\be \lbl{eq.aaa}
a^* = q^{\ve_1} [ce a^{-1}] + q^{\ve_2}[ bd a ^{-1}],
\ee
where $\ve_1, \ve_2 \in \BZ$. A careful calculation of $\ve_1$ and $\ve_2$
using the commutations between $a,b,c,d,e$ will show that $\ve_1=\ve_2=0$, and we get \eqref{eq.aa1}.
Another way to show $\ve_1=\ve_2=0$ is the following. The two monomials  $[ce a^{-1}]$ and  $[ bda^{-1}]$
are distinct. In fact, if they are the same, then either $c=b$ or $c=d$, which is impossible because then
either $\D$ or $\D'$ has a self-folded triangle. By Lemma \ref{r.reflection},  $\ve_1=\ve_2=0$.
\end{proof}

\begin{remark}
 One advantage of $\sX(\D)$ over the Chekhov-Fock algebra (see Section \ref{sec:shear}) is the coordinate
 change  maps come along naturally and are easy to study.
\end{remark}

\def\oC{\mathring C}

\subsection{Image of $\D$-simple knots under $\vpD$} Suppose $\D$ is a triangulation of $\SM$ and $\al$ is a $\cP$-arc or $\cP$-knot. We say that $\al$ is
 {\em $\D$-simple} if $\mu(\al,a)\le 1$ for all $a\in \D$.

 Suppose $\al$ is a $\D$-simple. After an isotopy we can assume that $\al$ is $\D$-normal, i.e. $\mu(\al,a)$ is equal to the number of internal common points of $\al$ and $a$, for all $a\in \D$.
Let $\cE(\al,\D)$ be the set of all edges $e$ in $\D$ such that $\mu(\al,e)\neq 0$ , and
$\cF(\al,\D)$ be the set of all triangles $\tau$ of $\D$   intersecting the interior of $\al$.
It is clear  that $\cE(\al,\D) \subset \oD$.

\FIGc{forbidden1}{Non-admissible case: $C(a)=-1, C(b)=1$.}{2cm}
{\em A coloring of $(\al,\D)$} is a map $C \in \BZ^\oD$ such that $C(e) =0$ if $ e \not \in \cE(\al, \D)$
and $C(e)\in \{ 1, -1\}$ if $e\in \cE(\al, \D)$.
A coloring $C$ of $(\al,\D)$ is said to be {\em admissible} if
for any triangle $\tau \in \cF(\al,\D)$ intersecting $\al$ at two edges $a$ and $b$, with notations of edges as in Figure \ref{fig:forbidden1}, one has
$(C(a), C(b)) \neq (-1,1).$
Denote by $\Col(\al,\D)$ the set of all admissible colorings of $\al$.

\def\oC{\mathring C}

\def\deq{\overset \bullet =}
\begin{theorem}
\lbl{r.fc}
Suppose $\D$ is a triangulation of a marked surface $\SM$ and
 $\al$ is a $\D$-simple, $\D$-normal knot. Then for any $C\in \Col(\al,\D)$, $CH$ has even entries, and
\be
\lbl{eq.fc}  \varphi_{\D}(\al) = \sum_{C\in \Col(\al,\D)} x^{C  H}.
\ee
\end{theorem}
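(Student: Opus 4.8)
The plan is to compute $\vpD(\al)$ via Muller's characterization of $\vpD$ (Theorem \ref{r.Muller}): since $\vpD$ is an algebra homomorphism with $\vpD\circ\thD=\id$ on $\XppD$, it suffices to find $\bk_0\in\BN^\D$ so that the product $\al\cdot\D^{\bk_0}\in\cS$ lies in $\thD(\XppD)$; then $\vpD(\al)=\vpD(\al\cdot\D^{\bk_0})\,X^{-\bk_0}$, which can be read off. I would take $\bk_0$ to be the indicator function of $\cE(\al,\D)$, so that $\D^{\bk_0}$ consists of one parallel copy $e^\parallel$ of each edge $e$ crossed by $\al$, all stacked above $\al$ in the cylinder.

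First, put $\al$ in $\D$-normal position, so that (using that $\al$ is $\D$-simple) it meets each triangle of $\cF(\al,\D)$ in a single corner arc and crosses each edge of $\cE(\al,\D)\subset\oD$ exactly once. Then $\al\cup\bigcup_e e^\parallel$ has exactly $|\cE(\al,\D)|$ crossings, one per crossed edge, with $e^\parallel$ the overstrand; resolving all of them by the skein relation $L=qL_++q^{-1}L_-$ produces $2^{|\cE(\al,\D)|}$ terms, each a $\cP$-link with coefficient a power of $q$, indexed by the choice $C\colon\cE(\al,\D)\to\{1,-1\}$ of smoothing at each crossing. Tracking the reconnections triangle by triangle: in a triangle $\tau\in\cF(\al,\D)$ the corner arc of $\al$ gets joined to the nearby half-copies of the $e^\parallel$'s of its two crossed sides, and the result is either $\cP$-isotopic to a parallel copy of a side of $\tau$, or --- in exactly one of the four local patterns, the one matching the forbidden pattern $(C(a),C(b))=(-1,1)$ of Figure \ref{fig:forbidden1} --- a trivial arc, which kills the whole term by the trivial-arc relation. (One also checks that no resolved diagram contains a contractible loop, so the trivial-loop relation is never used and all surviving coefficients stay monomial in $q$.) Hence the surviving terms are indexed precisely by $\Col(\al,\D)$, each equal to $\D^{\bm(C)}=\thD(X^{\bm(C)})$ for some $\bm(C)\in\BN^\D$. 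Applying $\vpD$ gives $\vpD(\al)\cdot X^{\bk_0}=\sum_{C\in\Col(\al,\D)}q^{r_C}X^{\bm(C)}$, and passing to $\XhD$ (where $X_a=x_a^2$) yields $\vpD(\al)=\sum_{C\in\Col(\al,\D)}q^{r'_C}\,x^{2(\bm(C)-\bk_0)}$; in particular every exponent that appears is even.

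The remaining computational step is to identify the exponent $2(\bm(C)-\bk_0)$ with $CH$. For this I would record, for each triangle $\tau\in\cF(\al,\D)$, which side of $\tau$ the reconnected corner arc runs parallel to, feed this into Lemma \ref{r.Qtau0} (which expresses $CQ_\tau$ through the values of $C$ on the sides of $\tau$), and sum the local data over all triangles; together with the fact that $C$ is supported on $\oD$ this should give $2(\bm(C)-\bk_0)=C\QQ=CH$ (recall $\QQ=H$ is the $\oD\times\D$ block of $Q$), which in particular re-proves that $CH$ has even entries. This requires care with the counterclockwise ordering of the sides of a triangle and with which half of $e^\parallel$ attaches on which side, but it is a finite check on the local models.

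Finally, the $q$-powers vanish. Since $\rk\QQ=|\oD|$ (Lemma \ref{r.51a}(b)), the map $C\mapsto C\QQ$ is injective on $\BZ^\oD$, so the monomials $x^{CH}$, $C\in\Col(\al,\D)$, are pairwise distinct, and $\vpD(\al)=\sum_{C\in\Col(\al,\D)}q^{r'_C}\,x^{CH}$ is a sum of distinct normalized monomials. But $[\al]$ is reflection invariant in $\cS$ and $\vpD$ commutes with the reflection $\chi$ (Theorem \ref{r.Muller}), while each normalized monomial $x^{CH}$ is reflection invariant; Lemma \ref{r.reflection} then forces every $r'_C=0$, giving $\vpD(\al)=\sum_{C\in\Col(\al,\D)}x^{CH}$. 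I expect the main obstacle to be the combinatorial core: verifying that the four local smoothing patterns in a triangle are exactly ``parallel to the opposite side / parallel to $a$ / parallel to $b$ / trivial arc'', with the trivial-arc case matching the non-admissibility condition of Figure \ref{fig:forbidden1} on the nose, and that the Lemma \ref{r.Qtau0} bookkeeping produces precisely the exponents $CH$ with the stated parity --- whereas the reflection-symmetry argument disposes of all the $q$-powers at once and keeps the rest of the proof clean.
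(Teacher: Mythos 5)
Your proposal is essentially the same proof the paper gives: in the paper's notation your $\D^{\bk_0}$ is exactly the element $E$, your resolution sum is exactly the paper's $\al E=\sum_C q^{\|C\|}L_C$ (steps 1--2), your identification of $2(\bm(C)-\bk_0)$ with $CH$ via Lemma \ref{r.Qtau0} is the paper's computation $\tw(\al,\tau,C)E_\tau^{-1}\beq x^{\hat C Q_\tau}$ (steps 3--4), and you conclude via the same distinctness-plus-reflection argument (step 5). The only cosmetic improvement is that your phrasing makes the parity of $CH$ manifest from $2(\bm(C)-\bk_0)$ directly, whereas the paper derives it as step 6 from $\vp_\D(\al)\in\XD$ and Lemma \ref{r.basesub}; also note your ``$e^\parallel$ stacked above $\al$'' would be the product $E\cdot\al$ rather than $\al\cdot E$, a harmless slip since you then match the paper's admissibility convention.
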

 Recall that $H$ is the $\oD\times \D$ submatrix of the face matrix $Q=Q_\D$,
  and $CH$ is the matrix product where $C$ is considered as a row vector, and
$x^{C  H}\in \XhD$ is the normalized monomial.
\begin{proof}To simplify notations we identify $\cS$ with its image under the embedding $\vp_\D: \cS
\embed \sX(\D)$.
Thus, for $a \in \D$, we have $a= X_a= x_a^2$.

{\em Step 1.} Let $E= \bigcup_{e\in \cE(\al,\D) }e $, which is a $\cP$-link and will be considered as an
element of $\cS\subset \XD$. We express explicitly the product  $\al E$ as a sum of monomials as follows.
\FIGc{smoothing}{Smoothing of crossing, $+1$ on the left, $-1$ on the right}{1.5cm}

Each $e \in \cE(\al,\D)$  intersects $\al$ at exactly one point. Let $L$ be the $\cP$-link diagram
$\al \cup E,$
with $\al$ above all the $e \in \cE(\al,\D)$. Then $L$ represents the product
$\al E,$ which can be calculated by resolving all the crossings of $L$ using the skein relation. Each
crossing of $L$ has two  smoothings, the $+1$ one and the $-1$ one, see Figure \ref{fig:smoothing}.
Each coloring $C$ of $(\al,\D)$ corresponds to exactly one smoothing of all the crossings of $L$ by the
rule:  the smoothing of the crossing on the edge $e$ is of type $C(e)$. Let $L_C$ be the result of
smoothing all the crossings of $L$ according to $C$.
Then, with $ \|C\|= \sum_{e} C(e)$,
$$
 \al E = \sum_{C} q^{\| C \| } L_C,
$$
where the sum is over all colorings $C$ of $(\al,\D)$.
 This implies
 \be\lbl{eq.62}
 \al  = \sum_{C } \al_C, \quad \text{where } \
 \al_C= q^{\| C \| } L_C\,  E^{-1}
 \ee

{\em Step 2.}
Note that $L_C$  is a collection of  $\cP$-arcs, with exactly one in each triangle $\tau\in \cF(\al,\D)$,
see Figure \ref{fig:proalpha}.
\FIGc{proalpha}{Four resolutions of left box corresponding to $(C(a),C(b))=(-1,1)$, $(1,-1)$, $(-1,-1)$,
and $(1,1)$}{2cm}

Denote the  $\cP$-arc of $L_C$ in $\tau$ by $\tw(\al,\tau,C)$, which is described in
Figure~\ref{fig:proalpha}:
\be
\lbl{eq.43}
\tw(\al,\tau,C) =
\begin{cases}
0 \quad &\text{if } C(a)=-1, C(b) = 1 \\
c &\text{if } C(a)=1, C(b) = -1 \\
b  &\text{if } C(a)=-1, C(b) = -1 \\
a  &\text{if } C(a)=1, C(b) = 1.
\end{cases}
\ee
The first identity of \eqref{eq.43} shows that $L_C=0$ if $C$ is not admissible. Hence \eqref{eq.62}
becomes
\be\lbl{eq.62aa}
 \al  = \sum_{C\in \Col(\al,\D) } \al_C, \quad \text{with } \
 \al_C\beq L_C\,  E^{-1},
 \ee
where  $u\beq v$ means $u= q^r v$ for some $r\in \BQ$. By construction,
\be
\lbl{eq.62d}
L_C\beq \prod_{ \tau \in \cF(\al,\D) } \tw(\al,\tau,C).
\ee

{\em Step 3.}
For each $\tau\in \cF(\al,\D)$ with notations of edges  as in Figure \ref{fig:forbidden1} let
\be \lbl{eq.43a}
E_\tau := x_a x_b.
\ee  Since $a= x_a^2$ and each $\tau\in \cF(\al,\D)$ has two edges intersecting $\al$, we have
\be
\lbl{eq.42a}
E \beq    \prod_{ e\in \cE(\al,\D) }e \beq  \prod_{ e\in \cE(\al,\D) }(x_e)^2  \beq \prod_{\tau\in
\cF(\al,\D)} E_\tau.
 \ee
Using \eqref{eq.62aa}, \eqref{eq.62d}, and \eqref{eq.42a}, we have
 \be \lbl{eq.62b}
\al_C \beq  L_C \, E ^{-1} \beq  \prod_{ \tau \in \cF(\al,\D) }  \tw(\al,\tau,C)  \,  E_\tau^{-1}  .
 \ee

\def\hC{\hat C}
{\em Step 4.} Let $\hat C\in \BZ^\D$ be the zero extension of $C\in \BZ^{\oD}$. Then $\hat C Q = CH $.
Using the explicit formula \eqref{eq.Qtau} of $Q_\tau$ and  \eqref{eq.43},  \eqref{eq.43a}, one can verify that
$$  \tw(\al,\tau,C)  \,  E_\tau^{-1} \beq  x ^{\hC   Q_\tau}.$$
Hence, \eqref{eq.62b} implies
\be
\lbl{eq.65a}
\al_C \beq \prod_{\tau \in \cF(\al,\D)} x ^{\hC   Q_\tau}\beq x ^{\hC   Q} = x^{C H}.
\ee
{\em Step 5.} From \eqref{eq.62aa} and \eqref{eq.65a}, we have
$$ \al = \sum_{C \in \Col(\al,\D)} q^{f(C)}x^{C   H},$$
with $f_C \in \BQ$. Since $\rk(H) =|\oD|$ (by Lemma \ref{r.51a}),  $C  H$ are distinct when $C$ runs the set $\Col(\al,\D)$.  Because $\al$ is reflection invariant, Lemma
\ref{r.reflection} shows that $f_C=0$ for all $C \in \Col(\al,\D)$. This proves~\eqref{eq.fc}, as equality in
$\XhD$.

{\em Step 6.} Equation \eqref{eq.fc}  and Lemma \ref{r.basesub} shows that each
$x^{CH}$ is in $\XD$, which is equivalent to $CH$ has even entries. This completes the proof of the
theorem.
\end{proof}

\begin{remark}
The fact that $CH$ has even entries can be proved directly easily. A more general fact is proved in Lemma \ref{r.even1} below.
\end{remark}

\subsection{Triangulation-simple knots}  Theorem \ref{r.fc} gives the image under $\varphi_\D$ of $\D$-simple knots, but not all $\ooS=\cS(\Sigma, \emptyset)$. Following is the reason why in many applications this should be enough.

A knot $\al \in \Sigma$
is {\em triangulation-simple} if there is a triangulation $\D$ such that $\al$ is  $\D$-simple.

\begin{proposition} \lbl{r.generator} Suppose $\SM$ is a triangulable marked surface.
Then the algebra $\ooS$ is generated by the set of all triangulation-simple knots.
\end{proposition}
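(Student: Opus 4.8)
The plan is to induct on the geometric complexity of a knot with respect to a fixed triangulation, after first reducing the statement to a single knot. Write $\cA\subseteq\ooS$ for the $\cR$-subalgebra generated by all triangulation-simple knots; we must show $\cA=\ooS$. By Proposition~\ref{r.basis} applied to $\cS(\Sigma,\emptyset)=\ooS$, the module $\ooS$ is free over $\cR$ on the classes $[\al]$ of essential $\cP$-links $\al\subset\Sigma$ (here $\cP=\emptyset$, so these are essential links in $\Sigma$). Such an $\al$ is a disjoint union $\al_1\sqcup\dots\sqcup\al_n$ of essential knots, and since the $\al_i$ are pairwise disjoint in $\Sigma$ their stacking is crossingless, so $[\al]=[\al_1]\cdots[\al_n]$ in $\ooS$. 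Hence $\ooS$ is generated as an $\cR$-algebra by essential knots, and it suffices to prove $[\al]\in\cA$ for every essential knot $\al$.

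Fix a triangulation $\D$ of $\SM$ (one exists by triangulability), and for a $\D$-normal essential knot $\al$ set $i_\D(\al):=\sum_{a\in\D}\mu(\al,a)=|\al\cap E_\D|$. I would induct on $i_\D(\al)$. If $\mu(\al,a)\le 1$ for every $a\in\D$, then $\al$ is $\D$-simple, hence triangulation-simple, so $[\al]\in\cA$; in particular if $i_\D(\al)=0$ then $\al$ lies in an open triangle, is trivial, and $[\al]=-q^2-q^{-2}\in\cR\subseteq\cA$. Otherwise $\mu(\al,a)\ge 2$ for some edge $a$, and the inductive step is a skein-theoretic reduction of the kind standard in arguments bounding generators of skein algebras: choosing two points $p,p'$ of $\al\cap a$ that are consecutive along $\al$ (so that the subarc $A\subset\al$ joining them has interior disjoint from $a$) and using the segment of $a$ between $p$ and $p'$ as a guiding band, one applies the Kauffman skein relation at a crossing produced near that band — possibly after an auxiliary isotopy — to rewrite $[\al]$ as an $\cR$-linear combination $[\al]=\sum_j c_j[\beta_j]$ in which every $\beta_j$ is an essential $\cP$-link with $i_\D(\beta_j)<i_\D(\al)$ (re-normalizing to remove bigons and absorbing any contractible component into $-q^2-q^{-2}$). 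Splitting each $\beta_j$ into the product of its knot components, every factor has $i_\D$-value $<i_\D(\al)$, so the inductive hypothesis gives $[\beta_j]\in\cA$, hence $[\al]\in\cA$, completing the induction.

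The one non-routine point — and the place where the hypotheses are genuinely used — is this reduction step: the points $p,p'$ (and, if necessary, the adjacent triangle and the corner nearest the guiding band) must be chosen so that \emph{every} link produced by resolving the auxiliary crossing is strictly simpler, i.e.\ so that no term merely reproduces $\al$ and forces a circular identity. This is exactly where one invokes that $\al$ is $\D$-normal (so there are no bigons between $\al$ and the edges, and re-normalization genuinely lowers $i_\D$ after the move), that a triangulation of a marked surface has no self-folded triangle (since $\cP\subseteq\pS$), and that a move supported near a subarc of $a$ cannot increase the number of intersections with any edge $e\ne a$ (distinct edges of $\D$ are disjoint off $\cP$). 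Everything else — the passage to knots, the decomposition of links into products of components, the handling of trivial circles, and the induction itself — is formal. An equivalent packaging is to filter $\ooS$ by the $\cR$-spans of multicurves whose components each meet every edge at most $n$ times and to show the $(n{+}1)$-st layer lies in the algebra generated by the $n$-th; the content is the same reduction lemma.
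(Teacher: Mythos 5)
Your proposal has a genuine gap in the inductive step, and it also misses the key idea of the paper, which is that the triangulation making a given knot simple is allowed to \emph{depend on the knot}.

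The problem with the inductive step: you propose to apply the Kauffman skein relation to $\al$ ``at a crossing produced near that band --- possibly after an auxiliary isotopy.'' But $\al$ is a knot, a simple closed curve with no crossings, and in a skein module isotopic links are literally equal, so an isotopy never produces a crossing that can be resolved. If instead you intend to push a subarc $A$ of $\al$ across a segment $B$ of the edge $a$ to lower $|\al\cap a|$, that move is an honest isotopy only when the disk bounded by $A\cup B$ is disjoint from $\al$ and from $\cP$, i.e.\ when there is an innermost empty bigon --- and $\D$-normality of $\al$ is precisely the statement that no such empty bigon exists. So the hypothesis you invoke to ``make re-normalization genuinely lower $i_\D$'' in fact forbids the move. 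The standard route to lowering geometric complexity in skein algebras (Bullock, Przytycki--Sikora) is indirect: one multiplies two lower-complexity curves, resolves their honest crossings, observes that the target curve appears with invertible coefficient, and solves for it. That is a real argument requiring care about which resolutions appear, and your sketch neither does this nor gestures at it accurately. Moreover, even if some version of the reduction could be made to work, your induction would prove the \emph{stronger} statement that $\ooS$ is generated by $\D$-simple knots for a single fixed $\D$, which is not what the proposition asserts and which does not obviously follow from the known generation results (Muller's Appendix A only controls intersections with a spanning subset $\Gamma\subset\oD$, not with every edge).

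The paper's proof is structurally different and avoids any skein reduction of its own. It cites the Muller/Bullock result that $\ooS$ is generated by knots meeting each edge of a maximal cut system $\Gamma\subset\oD$ at most once (where splitting along $\Gamma$ yields a disk $\bar\Sigma$). For such a knot $\al$, cutting along $\Gamma$ produces a collection of pairwise disjoint arcs in the polygon $\bar\Sigma$ with endpoints on distinct boundary edges; realizing the polygon with straight edges, one takes the convex hull of each arc's two ending edges (a triangle or a quadrilateral), observes these hulls are pairwise disjoint, triangulates each quadrilateral hull by a diagonal, and extends to a triangulation $\D'$ of $\SM$. The crucial point is that $\D'$ depends on $\al$, and $\al$ is $\D'$-simple. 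This is exactly the content of ``triangulation-simple,'' and no induction on intersection number is used at all.
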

\begin{proof} Let $\D$ be a triangulation of $\SM$ and
 $\Gamma \subset \oD$ be a maximal subset such that by splitting $\Sigma$ along edges in $\Gamma$ one gets a disk. The resulting disk, denoted by
 $\bar \Sigma$,   inherits a triangulation $\bD$ from  $\D$.
 According to  \cite[Appendix A]{Muller} ( see also \cite{Bullock}), $\ooS$ is generated by knots in $\oS$ which meet each edge in $\Gamma$ at
 most once. Such a knot $\al$, when cut by $\Gamma$, is a collection of non-intersecting intervals,
 called $\al$-intervals, in the polygon $\bar \Sigma$. Each $\al$-interval has end points on boundary edges of $\bD$, called the
 ending edges of the interval.
No two different intervals have a common ending edge. After an isotopy we  can assume that  the edges of
$\bD$ and all $\al$-intervals are straight lines on  the plane. For an $\al$-interval, the convex hull of
its ending edges is either a triangle (when the two ending edges have a common vertex) or a quadrilateral,
called the hull of the interval. The hulls of two different $\al$-intervals do not have interior
intersection. For each hull which is a quadrilateral choose a triangulation of it by adding one of its
diagonals. Let $\D'$ be  any triangulation of $\Sigma$ extending the triangulations  of all of the hulls. Then
$\al$ is $\D'$-simple. This proves the proposition.
 \end{proof}

\subsection{Image of $\D$-simple arcs} Suppose $\al$ is a { $\D$ simple}, $\D$-normal  $\cP$-arc. We assume $\al \not\in \D$.

\FIGc{arctoskein}{A $\D$-simple arc $\al$.
One might have $p_1=p_2$, and one of $a_1', a_1''$
might be one of $a_2', a_2''$}{2cm}

Then, with notations of edges as in Figure \ref{fig:arctoskein}, one has
\be
\lbl{eq.arctoskein}
\al = \sum_{C\in \Col(\al, \D)} \left[ x^{C Q}  \, x_{a'_1} x_ { a''_1} (x_{a_1})^{-1}\, x_{a'_2} x_ {
a''_2} (x_{a_2})^{-1}  \right]
\ee

The proof is a simple modification of that of Theorem \ref{r.fc}, taking into account what happens near the
end points of $\al$, and is left for the dedicated reader. We will not need this result in the current
paper.

\def\oA{\overset \circ A}
\def\fF{\mathfrak F}


\section{Chekhov-Fock algebra of marked surfaces and shear coordinates}
\lbl{sec:shear}

 In this section we show how the quantum trace map of Bonahon and Wong can be recovered from the natural embedding $\varphi_\D:\cS(\Sigma,\emptyset) \embed \XD$ by the shear-to-skein map and give an intrinsic description of the quantum Teichm\"uller space, for the case when $\SM$ is a triangulated marked surface.

 Throughout this section we fix a triangulable marked surface $\SM$; $\D$ will be a triangulation of $\SM$. We use notations $\ooS=\cS(\Sigma,\emptyset)$ and $\cS= \cS\SM$.

\subsection{Chekhov-Fock algebra and its square root version} \lbl{sec.CF} Here we define the Chekhov-Fock algebra $\YtD$ and its square root version $\YeD$ mentioned in Introduction.

In this subsection we allow $\SM$ to be more general, namely $\SM$ is a triangulated {\em generalized} marked surface, with triangulation $\D$.
The face matrix $Q=Q_\D\in\Mat(\D\times\D,\BZ)$ is defined (see Section \ref{sec.51}), but the vertex matrix $P$ cannot not be defined if there are interior marked points.

Recall that $\oQ$ is the $\oD \times \oD$
sub matrix of the face matrix $Q=Q_\D$.
Let $\cY(\D)$ be the quantum torus $\bT(\oQ ,q^{-1},y)$, i.e.
$$ \cY(\D) = R \la y_a^{\pm1}, a \in \oD  \ra /( y_a y_b = q^{- \oQ(a,b)} y_b y_a).$$
Let $Y_a= y_a^2$, for $a\in \oD$. Then the subalgebra $\YtD\subset \YD$ generated by
$Y_a^{\pm1}$ is the quantum torus $\bT(\oQ ,q^{-4},Y)$ with basis variables $Y_a, a \in \oD$. Let $\tYtD$
and $ \tYD$ be respectively the skew fields of $\YtD$ and $\YD$.
The preferred bases of $\YD$ and $\YtD$ are respectively $\{ y^\bk \mid \bk \in \BZ^{\oD }\}$ and $\{
y^\bk \mid \bk \in (2\BZ)^{\oD }\}$.

An element $\bk\in \BZ^\oD$ is called  {\em $\D$-balanced} if $\bk(\tau)$ is even for any
triangle $\tau\in \cF(\D)$. Here
$\bk(\tau)=\bk(a) + \bk(b) + \bk(c)$, where $a,b,c$ are edges of $\tau$, with the understanding that $\bk(e)=0$ for any boundary edge $e$.
Let $\cY^\ev(\D)$ be the $\cR$-submodule of $\cY(\D)$ spanned by $y^\bk$ with balanced $\bk$.
Clearly $\cY^\ev(\D)$ is an $\cR$-subalgebra of $\YD$ and $\YtD \subset \cY^\ev(\D)$.

For a $\cP$-knot $\al$, let $\bk_\al\in \BZ^\D$ be  defined by $\bk_\al(e)= \mu(\al,e)$, which is clearly $\D$-balanced. Recall that $\al$ is $\D$-simple if $\bk_\al(e) \le 1$ for all $e\in \D$.

\begin{lemma}
\lbl{r.gen.Ye}
As an $\cR$-algebra, $\cY^\ev(\D)$ is generated by $\cY^{(2)}(\D)$ and $y^{\bk_\al}$, with all $\D$-simple knots~$\al$.

\end{lemma}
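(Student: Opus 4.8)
The plan is to show that the monomials $y^{\bk}$ with $\bk$ $\D$-balanced are all expressible as products of elements of $\cY^{(2)}(\D)$ and of the distinguished elements $y^{\bk_\al}$. Since $\cY^{(2)}(\D)$ is spanned by $y^{\bn}$ with $\bn\in(2\BZ)^{\oD}$, and the set of $\D$-balanced vectors forms a sublattice $L\subset\BZ^{\oD}$ containing $(2\BZ)^{\oD}$, it suffices to show that $L$ is generated, as a group, by $(2\BZ)^{\oD}$ together with the vectors $\bk_\al$ for $\D$-simple knots $\al$; the passage from an additive statement about exponents to a multiplicative statement about monomials is then immediate from \eqref{eq.01}, since any rearrangement or regrouping only introduces a power of $q^{-1}$, which is a unit of $\cY^{(2)}(\D)\subset\cY^\ev(\D)$. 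So the whole lemma reduces to a lattice-generation claim.

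First I would fix the quotient group $L/(2\BZ)^{\oD}$, which is a subgroup of $\mathbb{F}_2^{\oD}$ cut out by the balancing conditions: one linear equation over $\mathbb{F}_2$ for each triangle $\tau\in\cF(\D)$, namely that the sum of the mod-$2$ coordinates on the (inner) edges of $\tau$ vanishes. Thus I want to show the images $\bar\bk_\al\in\mathbb{F}_2^{\oD}$ of the $\D$-simple knots span this subspace. The natural strategy is to produce, for each balanced residue class, an explicit $\D$-simple knot (or a sum of $\D$-simple-knot classes) realizing it. Concretely, I would work triangle by triangle / edge by edge: a $\D$-simple knot meeting exactly the edges in a prescribed subset $\cE\subset\oD$ has $\bar\bk_\al$ equal to the indicator of $\cE$, so I need to argue that every $\mathbb{F}_2$-cycle of the ``balancing'' constraint system (i.e. every assignment of $\{0,1\}$ to inner edges that is even on every triangle) is realized by some essential simple closed curve hitting precisely those edges at most once. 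Balanced $\{0,1\}$-vectors correspond exactly to (classes of) multicurves in $\Sigma\setminus\cP$ in $\D$-normal position that are $\D$-simple, and a standard dual-graph / normal-coordinate argument (the ``even around each triangle'' condition is precisely what allows the arcs in each triangle to be matched up into closed curves) shows such a multicurve exists; decomposing the multicurve into its components and using that each component's class is again balanced lets one reduce to connected $\D$-simple knots, whose sum in $\mathbb{F}_2^{\oD}$ is the original vector.

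The key remaining points to nail down are: (i) that one may indeed take the realizing multicurve to be \emph{essential} — trivial components contribute the zero vector mod $2$ anyway, or bound disks meeting no edge, so they can simply be discarded; and (ii) that cancellation of $q$-powers genuinely happens, i.e. that once $\bk=\sum_i 2\bm_i+\sum_j \bk_{\al_j}$ at the level of exponents, the identity $y^{\bk}\beq \prod_i Y^{\bm_i}\prod_j y^{\bk_{\al_j}}$ holds up to a power of $q^{-1}$ that lies in $\cR$, which is immediate from Proposition \ref{r.11s} since all pairing values $\la\cdot,\cdot\ra_{\oQ}$ are integers and $q^{-1}\in\cR$ is a unit. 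With these in hand, every $y^\bk$ with $\bk$ balanced is a product of generators of the claimed type, and conversely each $y^{\bk_\al}$ is balanced, so $\cY^\ev(\D)$ is exactly the subalgebra they generate.

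The main obstacle, I expect, is step (i)–(ii)'s geometric half: cleanly proving that \emph{every} $\D$-balanced $\{0,1\}$-valued vector on inner edges is the intersection vector of an actual $\D$-simple multicurve. This is a normal-surface/normal-curve realization statement — the ``even on each triangle'' condition is exactly the local gluing condition for matching arc endpoints across edges inside each triangle — and while morally standard, one has to be careful about self-folded triangles (where the face matrix degenerates) and about boundary edges (excluded from $\oD$), so the bookkeeping there is where the real work sits. Everything after that is the routine quantum-torus algebra recorded above.
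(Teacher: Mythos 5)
Your proposal follows essentially the same route as the paper: decompose $\cY^\ev(\D)$ by mod-$2$ residues (the paper writes this explicitly as the direct sum $\cY^\ev(\D)=\bigoplus_{\bu\in(\{0,1\}^\oD)_\ev} y^\bu\,\cY^{(2)}(\D)$, noting $(\{0,1\}^\oD)_\ev\cong H^1(\Sigma,\BZ/2)$), and then realize each balanced $\{0,1\}$-vector as $\sum_i\bk_{\al_i}$ for a disjoint union of $\D$-simple knots $\al_i$ — exactly the normal-curve realization step you correctly identify as the crux, which the paper states without elaboration, citing \cite{BW0}. The $q$-power cancellation you record is the same harmless observation the paper implicitly relies on.
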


\def\bu{{\mathbf u}}
\begin{proof}  A version of this statement already appeared in \cite{BW0}, and we use same proof.
Every $\bk \in \BZ^\oD$ has a unique presentation
$ \bk= \bu + \bm$
where $\bm\in (2\BZ)^{\oD }$ and $\bu \in \{0,1\}^\oD$. Hence,
\begin{align}
\notag
\YD &= \bigoplus_{\bu \in \{0,1\}^\oD} y^\bu \YtD \\
\lbl{eq.13}
\cY^\ev(\D) &= \bigoplus_{\bu \in (\{0,1\}^\oD)_\ev} y^\bu \YtD,
\end{align}
where $(\{0,1\}^\oD)_\ev$ is the set of $\D$-balanced $\bu\in \{0,1\}^\oD$. Note that $(\{0,1\}^\oD)_\ev$
is naturally isomorphic to the homology group $H^1(\Sigma,\BZ/2)$.
For every $\bu\in (\{0,1\}^\oD)_\ev$, there exists $\al= \sqcup_{i=1}^j \al_i$ such that each $\al_i$ is a
$\D$-simple knot and $\bu = \sum \bk_{\al_i}$. Hence, \eqref{eq.13} shows that $\cY^\ev(\D)$ is generated
by $\YtD$ and $y^{\bk_\al}$ with all $\D$-simple knots $\al$.
\end{proof}

\def\hbk{{\hat \bk}}

Let $\QQ$ be the $\oD\times \D$ submatrix of $Q$.
 \begin{lemma}
 \lbl{r.even1} Suppose $\pS\neq \emptyset$ and
 $\bk\in \BZ^\oD$. Then $\bk H$ has even entries if and only if $\bk$ is $\D$-balanced.
 \end{lemma}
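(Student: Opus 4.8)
The statement is entirely about the integer matrix $H$, the $\oD \times \D$ submatrix of the face matrix $Q = Q_\D$, so the plan is to reduce everything to Lemma \ref{r.Qtau0}, which expresses $(\bk Q_\tau)(c) = \bk(b) - \bk(a)$ for a non-self-folded triangle $\tau$ with edges $a,b,c$ in counterclockwise order. First I would fix $\bk \in \BZ^\oD$ and let $\hat\bk \in \BZ^\D$ be its zero-extension, so that $\hat\bk Q$ restricted to $\oD$ equals $\bk H$; thus I need to understand the parity of $(\hat\bk Q)(e)$ for each inner edge $e$. Writing $Q = \sum_{\tau} Q_\tau$ over the triangles of $\D$, an edge $e$ meets at most two triangles (exactly two if $e$ is inner, since $\pS \neq \emptyset$ is not even needed here — but I will use $\pS \neq \emptyset$ to rule out degenerate global cases and to ensure the triangulation behaves as expected), and by Lemma \ref{r.Qtau0} the contribution of each incident non-self-folded triangle $\tau$ to $(\hat\bk Q)(e)$ is $\pm(\hat\bk(e') - \hat\bk(e''))$ where $e', e''$ are the other two edges of $\tau$. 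So $(\bk H)(e)$ is, up to sign on each term, a sum of two differences of the form $\hat\bk(\text{edge}) - \hat\bk(\text{edge})$.

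The key computation is then purely local and parity-theoretic: modulo $2$, a difference is the same as a sum, so $(\bk H)(e) \equiv \sum_{\tau \ni e} \big(\hat\bk(\tau) - \hat\bk(e)\big) \pmod 2$, where the sum is over the (one or two) triangles containing $e$ and $\hat\bk(\tau)$ is the sum of $\hat\bk$ over the three edges of $\tau$. If both incident triangles $\tau_1, \tau_2$ are non-self-folded and distinct, this gives $(\bk H)(e) \equiv \hat\bk(\tau_1) + \hat\bk(\tau_2) \pmod 2$ (the two copies of $\hat\bk(e)$ cancel mod $2$). Hence if $\bk$ is $\D$-balanced — meaning every $\hat\bk(\tau)$ is even — then every $(\bk H)(e)$ is even. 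Conversely, if every $(\bk H)(e)$ is even, I would argue that all $\hat\bk(\tau) \pmod 2$ are forced to vanish: the relations $\hat\bk(\tau_1) + \hat\bk(\tau_2) \equiv 0$ as $e$ ranges over inner edges say that $\tau \mapsto \hat\bk(\tau) \bmod 2$ is constant on each connected component of the ``dual graph'' of triangles, and since $\pS \neq \emptyset$ there is at least one triangle with a boundary edge, along which one obtains (from the boundary-edge relation, where only one triangle is incident) that this constant is $0$; connectivity of $\Sigma$ then propagates $\hat\bk(\tau) \equiv 0$ to all triangles.

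The two places needing care — and the main obstacle — are \emph{self-folded triangles} and \emph{edges lying in only one triangle}. If $\tau$ is self-folded, $Q_\tau = 0$, so it contributes nothing; but then an edge $e$ may effectively be ``doubly incident'' to the ambient triangle around the self-folded one, or an inner edge may be the folded edge whose two sides lie in the same triangle. I would handle this by carefully re-examining the combinatorics around a self-folded triangle (Figure \ref{fig:triangle1}): the interior edge of a self-folded triangle has both its sides against the ``big'' edge, and one checks directly from \eqref{eq.Qtau} and Lemma \ref{r.Qtau0} that the parity bookkeeping still works, using that $\hat\bk(\tau) = \hat\bk(a) + \hat\bk(b) + \hat\bk(c)$ with two of $a,b,c$ coinciding reduces to a single edge mod $2$. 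For a boundary edge $e \in \D \setminus \oD$ (relevant only in the converse direction, for the propagation argument), exactly one triangle $\tau$ contains it, and Lemma \ref{r.Qtau0} gives a relation among $(\bk H)$-values on the \emph{inner} edges of $\tau$; combined with $\bk(e) = 0$ for boundary $e$, this is exactly what anchors the constant to $0$. I expect the self-folded case to be the only genuinely fiddly point; everything else is a two-line mod-$2$ manipulation plus connectivity of $\Sigma$.
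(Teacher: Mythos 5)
Your proposal is essentially the paper's argument: write $\bk H = \hbk Q = \sum_\tau \hbk Q_\tau$, use Lemma~\ref{r.Qtau0} to compute $(\hbk Q)(e)$ edge by edge modulo $2$ in terms of the triangle sums $\hbk(\tau)$, and then propagate across adjacent triangles using connectivity of $\Sigma$ and anchor at the boundary. However, there is one genuine confusion you should fix, and one worry you should drop.

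The indexing. $H$ is the $\oD\times\D$ submatrix of $Q$, so $\bk H$ lives in $\BZ^\D$, not $\BZ^\oD$; your opening claim that ``$\hbk Q$ restricted to $\oD$ equals $\bk H$'' is wrong --- they are already equal as $\D$-indexed vectors, with nontrivial entries at boundary edges. Consequently, ``$\bk H$ has even entries'' includes the condition at every boundary edge $c$, and your later assertion that boundary edges are ``relevant only in the converse direction'' is misleading. In the forward direction you must check $(\bk H)(c)$ is even for each boundary $c$; in the converse direction, the identity $(\bk H)(c) = (\hbk Q_\tau)(c) \equiv \hbk(\tau) \pmod 2$ (where $\tau$ is the unique triangle containing $c$, and one uses $\hbk(c)=0$) is exactly the anchor --- there is no need for, and no obvious content to, a ``relation among $(\bk H)$-values on the inner edges of $\tau$.'' The paper states this boundary-edge computation as item~(i) and uses it directly. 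Without it, your propagation argument would leave the mod-$2$ constant $\hbk(\tau)$ undetermined (consider a disk with four boundary marked points and one diagonal $d$: here $\oD = \{d\}$, $(\bk H)(d)\equiv 0$ always, yet $\bk=\delta_d$ is not $\D$-balanced --- it is the boundary-edge entries of $\bk H$ that detect this).

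The self-folded triangles. You flag this as ``the main obstacle,'' but it is a red herring: the paper's own proof does not treat self-folded triangles either, and every application of this lemma in the paper is for a marked surface (where $\cP\subset\pS$ and, as the paper notes in Section~\ref{sec:markedsurface}, self-folded triangles cannot occur). Your instinct that the generalized-marked-surface phrasing leaves this case formally open is fair, but it is not something the paper's proof resolves, so it is not an obstacle to reproducing the paper's argument.

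Once the indexing is corrected so that both directions of the equivalence check boundary edges via $(\bk H)(c)\equiv\hbk(\tau)\pmod 2$, your proof matches the paper's.
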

 \begin{proof} Let $\hat \bk\in \BZ^\D$ be the 0 extension of $\bk\in \BZ^\oD$. Then  $\bk H= \hat \bk Q$.
 One has
 $$ \bk H= \hat \bk Q= \sum_{\tau \in \cF(\D)} \hbk Q_\tau.$$
 Note that $Q_\tau(a,b)\neq 0$ only if $a, b$ are edges of $\tau$.

 (i)
 Suppose $\tau\in \cF(\D)$ has edges $a,b,c$ with $c$ a boundary edge. Then $\hbk(c)=0$.
 Since $\tau$ is the only triangle having $c$ as an edge,
 \begin{align*}
 \hbk Q(c)= (\hbk Q_\tau)(c) \equiv \hbk(a)+ \hbk(b)\equiv \hbk(\tau) \pmod 2,
 \end{align*}
where the second equality follows from Lemma \ref{r.Qtau0}.

 (ii) Suppose $a\in \oD$, with $\tau, \tau'\in \cF(\D)$ the two triangles having $a$ as an edge. Then, again using Lemma \ref{r.Qtau0},
 $$(\hbk Q)(a) = (\hbk (Q_\tau + Q_{\tau'}))(a)\equiv
  \hbk(\tau) + \hbk(\tau') \pmod 2.$$

 Since  $\D$ has at least one boundary edge and $\Sigma$ is connected,
 (i) and (ii) show that $\hbk Q$ has even entries if and only if $\hbk(\tau)$ is even for any $\tau\in \cF(\D)$, or equivalently, $\bk$ is $\D$-balanced.
 \end{proof}

\begin{remark} If we use the face matrix $Q$ instead of its submatrix $\oQ$, then $\YtD$ is the
Chekhov-Fock algebra defined in \cite{BW0,Liu}, and $\YeD$ is the Chekhov-Fock square root algebra of
\cite{BW0}.
The skew field $\tY^{(2)}(\D)$  is considered as a {\em quantization } of a certain version of the
Teichm\"uller space of $\SM$, using the {\em shear coordinates}, see \cite{CF,BW0}.
\end{remark}

\subsection{Shear-to-skein map} \lbl{sec.psi} From now until the end of  this section we fix a triangulable  marked surface $\SM$ and use the notations  $\ooS= \cS(\Sigma, \emptyset)$ and  $\cS=\cS\SM$.
Suppose $\D$ is a triangulation of $\SM$, and $P$ and $Q$ are respectively its vertex matrix and face matrix.
Recall that $H$ is the $\oD\times \D$ submatrix of $Q$.
By Lemma \ref{r.51a},  $\rk(H)= |\oD|$ and
\be
\lbl{r.51a1}\QQ P \QQ^\dag = -4 \oQ.
\ee
 Hence,
Proposition \ref{r.homo}  shows that there is a unique injective $\cR$-algebra homomorphism
$$ \psi: \cY(\D) \to \sX^{\left( \frac 12 \right)}(\D),$$
which is a multiplicatively linear homomorphism, such that
\be
\lbl{eq.44a}
\psi(y^\bk) = x^{\bk H}.
\ee

\FIGc{shear-to-skein}{Edges of a quadrilateral with a diagonal}{1.8cm}
We call $\psi$ the {\em shear-to-skein map}. Explicitly, if $a,b,c,d,e$ are
edges of $\D$ as in Figure \ref{fig:shear-to-skein}, then
\be
\lbl{eq.shear}
\psi(y_a) = [ x_b x_c^{-1}  x_d x_e^{-1}].
\ee

It is the factor $4$ in equation \eqref{r.51a1} that forces us to enlarge $\XD$ to $\XhalfD$ to accommodate the images of $\psi$. While $\XD$ has a geometric interpretation coming from skein, $\XhalfD$ does not and is only  convenient for algebraic manipulations. It turns out that $\cY^\ev(\D)$ is exactly the subset of $\cY(\D)$ whose image under $\psi$ is in $\XD$, which explains how $\YeD$ arises naturally in the framework of the shear-to-skein map.

\def\hbk{\hat \bk}
\begin{proposition}   \lbl{r.Ybl}In $\psi: \cY(\D)\to \XhalfD$,
one has $\psi^{-1}(\XD)= \cY^\ev(\D)$.
\end{proposition}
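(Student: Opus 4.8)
The plan is to deduce the proposition directly from Lemma~\ref{r.even1} together with the based-module bookkeeping of Section~\ref{sec.based}. First I would describe how $\XD$ sits inside $\XhalfD$ as a based submodule: since $X_a=x_a^2$ and the Weyl normalization does not depend on the order of the factors, the preferred monomial $X^\bk$ of $\XD$ coincides with the normalized monomial $x^{2\bk}$ of $\XhalfD$. Hence, viewed inside $\XhalfD$, the algebra $\XD$ is the based submodule with preferred basis $\{x^{\bm}\mid \bm\in(2\BZ)^{\D}\}$, and in particular a single normalized monomial $x^{\bm}\in\XhalfD$ lies in $\XD$ if and only if every coordinate of $\bm$ is even.

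Next I would analyze $\psi$ on the preferred basis of $\cY(\D)$. By construction $\psi(y^{\bk})=x^{\bk H}$ for $\bk\in\BZ^{\oD}$, and by Lemma~\ref{r.51a} we have $\rk(H)=|\oD|$, so the assignment $\bk\mapsto\bk H$ is injective; thus $\psi$ sends the preferred basis $\{y^{\bk}\}$ of $\cY(\D)$ \emph{injectively} into the preferred basis $\{x^{\bm}\}$ of $\XhalfD$. Combining this with the first step, $\psi(y^{\bk})\in\XD$ precisely when $\bk H$ has even entries, which — since $\pS\neq\emptyset$ for any triangulable marked surface (as then $\cP\subset\pS$ is nonempty) — is equivalent by Lemma~\ref{r.even1} to $\bk$ being $\D$-balanced, i.e.\ to $y^{\bk}\in\cY^\ev(\D)$.

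Finally I would promote this from basis elements to arbitrary elements. Given $z=\sum_j c_j\,y^{\bk_j}\in\cY(\D)$ with nonzero $c_j\in\cR$ and pairwise distinct $\bk_j$, applying $\psi$ gives $\psi(z)=\sum_j c_j\,x^{\bk_j H}$ with the exponents $\bk_j H$ pairwise distinct, so this is already the expansion of $\psi(z)$ in the preferred basis of $\XhalfD$. If $z\in\cY^\ev(\D)$, every $\bk_j$ is $\D$-balanced, hence every $x^{\bk_j H}$ lies in $\XD$ by the previous step, so $\psi(z)\in\XD$. Conversely, if $\psi(z)\in\XD$, then since $\XD$ is a based submodule of $\XhalfD$ every basis element $x^{\bk_j H}$ occurring in $\psi(z)$ must lie in the preferred basis of $\XD$ (cf.\ Lemma~\ref{r.basesub}), so each $\bk_j H$ has even entries, hence each $\bk_j$ is $\D$-balanced, hence $z\in\cY^\ev(\D)$. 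This gives $\psi^{-1}(\XD)=\cY^\ev(\D)$. The only point needing care is the injectivity of $\bk\mapsto\bk H$, which guarantees that no cancellation in $\psi(z)$ can hide a non-even exponent; apart from that the argument is purely formal, the substantive input being Lemma~\ref{r.even1}.
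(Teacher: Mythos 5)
Your proposal is correct and follows essentially the same route as the paper: both identify $\XD$ as the based submodule of $\XhalfD$ with preferred basis $\{x^{\bm}\mid\bm\in(2\BZ)^\D\}$, observe that $\psi$ carries preferred basis to preferred basis (injectively, since $\rk H=|\oD|$), and then invoke Lemma~\ref{r.even1} to translate the parity condition into $\D$-balancedness. The only difference is that you make explicit the injectivity of $\bk\mapsto\bk H$ and the appeal to Lemma~\ref{r.basesub}, steps the paper leaves implicit in the phrase ``$\psi^{-1}(\XD)$ is $\cR$-spanned by all $y^\bk$ such that $\bk H$ has even entries.''
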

\begin{proof} Recall that
 $\{ y^\bk \mid \bk \in \BZ^\oD\}$ and  $\{ x^\bm \mid \bm \in (2\BZ)^\D\}$ are respectively $\cR$-bases of   $\cY(\D)$ and $\XD$, and $\psi(y^\bk)= x^{\bk H}$. Hence
 $\psi^{-1}(\XD)$ is $\cR$-spanned by all $y^\bk$  such that $\bk H$ has even entries.
  Since marked surface has non-empty boundary, Lemma \ref{r.even1} shows that $\bk H$ has even entries if and only if $\bk$ is $\D$-balanced.  Hence, $\psi^{-1}(\XD)= \cY^\ev(\D)$.
\end{proof}

\begin{remark}
When $q=1$, formula \eqref{eq.shear}  expresses the shear coordinates in terms of the Penner
coordinates. We are grateful to F. Bonahon for suggesting that relation between the Chekhov-Fock algebra
and the Muller algebra, on the classical level, should be the relation between the
 shear coordinates and the Penner coordinates.
\end{remark}

\subsection{Change of shear coordinates} The shear-to-skein map $ \psi: \cY^\ev(\D) \to \XD$
extends to a unique algebra homomorphism $\tpsi: \tY^\ev(\D) \to \tXD$, which is also injective. Here
$$\hYeD:=
\YeD \tYtD,$$
which is  an $\cR$-subalgebra of $\tYD$.
Let
$ \tpsi_\D: \tY^\ev(\D) \embed \tcS$
be the composition
$$ \tY^\ev(\D) \overset{\tpsi}{\longrightarrow} \tXD  \overset{\cong }{\underset{(\varphi_\D)^{-1}}\longrightarrow}\tcS.$$
\begin{theorem}
\lbl{r.shearchange}
 (a) The image $\tcS^\ev:= \tpsi_\D(\tY^\ev(\D)) \subset \tcS$ does not depend on the triangulation $\D$. Similarly, $\tcS^{(2)}:= \tpsi_\D(\tY^{(2)}(\D))$ does not depend on $\D$.

(b) Suppose  $\D,\D'$ are two different triangulations of $\SM$. Then the algebra isomorphism $\Theta_{\D \D'}: \hYeDp \to \hYeD$, defined by $\Theta_{\D \D'}=  \tpsi_{\D'} \circ (\tpsi_\D)^{-1}$, coincides with shear
coordinate change map defined in  \cite{Hiatt,BW0}. Here $(\tpsi_\D)^{-1}$ is defined on $\tcS^\ev$.

(c) The image  $\tcS^{(2)}= \tpsi_\D(\tY^{(2)}(\D))$ is the sub-skew-field of $\ttS$ generated by all elements of the form $a b^{-1} c d^{-1}$, where $a,b,c,d$ are edges in a cyclic order of a $\cP$-quadrilateral.
\end{theorem}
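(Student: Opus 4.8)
\emph{Strategy and reduction to a flip.} The plan is, using the connectivity of triangulations under flips (\cite{FST}) together with transitivity of the three relations at issue, to reduce to a single flip $\D'=\D\setminus\{a\}\cup\{a^*\}$ at an inner edge $a$, and then to carry out one computation inside the Muller torus. Label the surrounding $\cP$-quadrilateral so that $\D$ has the triangles $\{a,b,c\}$ and $\{a,d,e\}$ with cyclic sides $b,c,d,e$ as in Figure \ref{fig:mutation}; then $\D'$ has triangles $\{a^*,c,d\}$ and $\{a^*,e,b\}$. Write $\Psi_\D\colon\tY^\ev(\D)\to\tXD$ for the skew-field extension of the shear-to-skein map $\psi$ of \eqref{eq.44a}, and $\Psi_{\D'}$ for its $\D'$-analogue. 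From $\tpsi_\D=(\tvarphi_\D)^{-1}\circ\Psi_\D$ and, by Proposition \ref{r.42}, $(\tvarphi_{\D'})^{-1}=(\tvarphi_\D)^{-1}\circ\Phi_{\D\D'}$, one gets $(\tpsi_\D)^{-1}\circ\tpsi_{\D'}=\Psi_\D^{-1}\circ\Phi_{\D\D'}\circ\Psi_{\D'}$, so everything reduces to understanding $\Phi_{\D\D'}\circ\Psi_{\D'}$; here $\Phi_{\D\D'}$ fixes $X_v$ for $v\in\D\setminus\{a\}$ and sends $X_{a^*}$ to $[X_cX_eX_a^{-1}]+[X_bX_dX_a^{-1}]$ by \eqref{eq.aa1}.

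\emph{The core computation.} I would evaluate $\Phi_{\D\D'}\bigl(\Psi_{\D'}(z)\bigr)$ on the generators $z$ of $\tY^\ev(\D')$ supplied by Lemma \ref{r.gen.Ye}, i.e. the $Y_v=y_v^2$ for inner edges $v$ of $\D'$ and the $y^{\bk_\al}$ for $\D'$-simple knots $\al$. For $v$ not incident to the flipped quadrilateral, $\Psi_{\D'}(Y_v)$ is a monomial free of $X_{a^*}$, hence fixed by $\Phi_{\D\D'}$, and it equals $\Psi_\D(Y_v)$. For $v=a^*$, using \eqref{eq.shear} for $a^*$ in $\D'$ and $(x^{\bk})^{-1}=x^{-\bk}$ one finds that $\Psi_{\D'}(Y_{a^*})$ is the inverse of the monomial $[X_bX_c^{-1}X_dX_e^{-1}]=\Psi_\D(Y_a)$, and is $\Phi_{\D\D'}$-fixed, so $\Theta_{\D\D'}(Y_{a^*})=Y_a^{-1}$. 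For $v\in\{b,c,d,e\}$, and similarly for a $\D'$-simple knot crossing $a^*$ (where Theorem \ref{r.fc} identifies the images), the monomial $\Psi_{\D'}(Y_v)$ does involve $X_{a^*}$, so $\Phi_{\D\D'}$ turns it into a two-term sum; factoring out the common factor and doing the Weyl-normalization bookkeeping, this sum is $\Psi_\D$ applied to $Y_v$ multiplied by a shear-correction factor $(1+Y_a^{\pm1})^{\pm1}$, with signs fixed by the position of $v$ around the quadrilateral. Spurious powers of $q$ are eliminated by reflection invariance (Lemma \ref{r.reflection}), which applies since $\Psi_\D$, $\tvarphi_\D$, $\Phi_{\D\D'}$ are reflection invariant and every term in sight is a normalized monomial; this $q$-accounting, especially for the generators $y^{\bk_\al}$, is the part I expect to be the main obstacle. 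In every case the output lies in $\Psi_\D(\tY^\ev(\D))$, and in $\Psi_\D(\tY^{(2)}(\D))$ when $z\in\tY^{(2)}(\D')$ since $Y_a^{-1}$ and $Y_v(1+Y_a^{\pm1})^{\pm1}$ lie in $\tY^{(2)}(\D)$; flipping back gives the reverse inclusion, so $\Psi_\D^{-1}\circ\Phi_{\D\D'}\circ\Psi_{\D'}$ is a well-defined isomorphism $\tY^\ev(\D')\to\tY^\ev(\D)$ restricting to an isomorphism $\tY^{(2)}(\D')\to\tY^{(2)}(\D)$.

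\emph{Deducing (a), (b), (c).} This well-definedness shows that $(\tpsi_\D)^{-1}$ is defined on $\tpsi_{\D'}(\tY^\ev(\D'))$ and that $\tpsi_{\D'}(\tY^\ev(\D'))=\tpsi_\D(\tY^\ev(\D))$ and $\tpsi_{\D'}(\tY^{(2)}(\D'))=\tpsi_\D(\tY^{(2)}(\D))$; with the flip reduction this is (a). The explicit formulas of the core computation, $Y_{a^*}\mapsto Y_a^{-1}$ and $Y_v\mapsto Y_v(1+Y_a^{\pm1})^{\pm1}$ together with the corresponding rule on the $y^{\bk_\al}$, are exactly the quantum shear mutation of \cite{BW0,Hiatt}, so $\Theta_{\D\D'}=(\tpsi_\D)^{-1}\circ\tpsi_{\D'}$ coincides with their change-of-coordinate map, which is (b). Finally, for (c): $\tcS^{(2)}=\tpsi_\D(\tY^{(2)}(\D))$ is the sub-skew-field generated by the $\tpsi_\D(Y_v)$ over all inner edges $v$ of $\D$, and $\tpsi_\D(Y_v)\beq bc^{-1}de^{-1}$ for the cyclic sides $b,c,d,e$ of the quadrilateral around $v$, by \eqref{eq.shear} and $Y_v=y_v^2$; hence $\tcS^{(2)}$ is contained in the sub-skew-field $\mathcal{K}$ generated by all quadrilateral elements $ab^{-1}cd^{-1}$. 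Conversely, given a $\cP$-quadrilateral $R$ with cyclic sides $a,b,c,d$ and a diagonal $\delta$ — taking $R$ embedded with pairwise non-$\cP$-isotopic sides, as a null-homotopic side makes $ab^{-1}cd^{-1}=0$ and a quadrilateral with one coinciding pair of opposite sides already occurs inside a triangulation, so no new generators arise — the five $\cP$-arcs are pairwise disjoint off $\cP$ and pairwise non-$\cP$-isotopic, hence extend to a $\cP$-triangulation $\D_R$ in which $R$ is the union of the two triangles meeting $\delta$; then $ab^{-1}cd^{-1}\beq\tpsi_{\D_R}(Y_\delta)\in\tpsi_{\D_R}(\tY^{(2)}(\D_R))=\tcS^{(2)}$ by (a), so $\mathcal{K}=\tcS^{(2)}$.
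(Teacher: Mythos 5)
Your overall approach coincides with the paper's: reduce to a single flip via \cite{FST}, use the identity $(\tpsi_\D)^{-1}\circ\tpsi_{\D'}=\Psi_\D^{-1}\circ\Phi_{\D\D'}\circ\Psi_{\D'}$ together with the explicit flip formula \eqref{eq.aa1}, evaluate on the generators of $\tY^\ev(\D')$ from Lemma~\ref{r.gen.Ye}, and identify the resulting formulas with the shear mutation of \cite{Liu,Hiatt,BW0}. The argument for (c) is also the paper's argument. So there is no genuine difference of strategy to report; however, there are two concrete gaps in the core computation that you explicitly flag as ``the part I expect to be the main obstacle'' and that do in fact require work beyond what you outline.

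First, your asserted formula $\Theta_{\D\D'}(Y_v)=Y_v(1+Y_a^{\pm1})^{\pm1}$ for $v\in\{b,c,d,e\}$ is only correct when the four boundary edges of the flipped $\cP$-quadrilateral are pairwise distinct. On a marked surface they need not be: exactly one pair of opposite edges may coincide, giving the cases $b=d$ or $c=e$ (cf.\ Figure~\ref{fig:mutation} and the discussion after it). In those cases the correct mutation is of the form
\begin{equation*}
\Theta_{\D\D'}(Y_b)=Y_b+(q^{1/2}+q^{-1/2})[Y_aY_b]+[Y_a^2Y_b]\qquad(\text{when }b=d),
\end{equation*}
and its $c=e$ analogue; these are not captured by a single factor $(1+Y_a^{\pm1})^{\pm1}$ and require a separate computation (the paper treats these as cases (B) and (C) alongside the generic case (A)). Since matching \emph{exactly} Hiatt's and Bonahon--Wong's formulas is the content of (b), the degenerate cases cannot be omitted.

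Second, the treatment of the extra generators $y^{\bk_\al}$ is thinner than the problem demands. A $\D'$-simple knot $\al$ that crosses $a^*$ need not be $\D$-simple after the flip: it may cross the new diagonal $a$ twice, i.e.\ be only ``almost $\D$-simple.'' You therefore need a monomial formula for $\psi_\D(y^{\bk_{\al,\D}})$ in this almost-simple situation as well (the paper proves this as a separate lemma for $\D$-simple and almost $\D$-simple knots), and then a case analysis on how $\al$ passes $a^*$ (unchanged, right--left, left--right) to see which crossings produce a two-term correction. Also, your citation of Theorem~\ref{r.fc} here is off target: that theorem computes $\vp_\D(\al)$ (a sum of monomials under the skein coordinate map), whereas what is needed is a direct evaluation of the single monomial $\psi_\D(y^{\bk_{\al,\D}})=x^{\bk_{\al,\D}H}$ and its rewriting as a normalized Muller-torus monomial in the edge variables. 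Using Theorem~\ref{r.fc}/Theorem~\ref{r.52} here would also risk circularity, since Theorem~\ref{r.52}'s proof itself invokes the commutativity of diagram~\eqref{eq.dia9}, i.e.\ the content of the theorem you are proving.
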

For the definition of $\cP$-quadrilateral, see Section \ref{sec.42}.
The restriction of $\Theta_{\D \D'}$ onto $\tY^{(2)}(\D')$  is an isomorphism from $\tY^{(2)}(\D')$ to
 $\tYtD$ and is the coordinate change map constructed earlier by Chekhov-Fock and Liu \cite{CF,Liu}.
The proof of Theorem \ref{r.shearchange} is not difficult: it consists mainly of calculations, a long definition of
Hiatt's map, and will be presented in Appendix.

From the construction, we have the following commutative diagram
\be
\lbl{eq.dia9}
 \begin{CD}  \hYeDp   @> \tpsi>> \hXDp \\
 @V
 \Theta_{\D\D'}  VV  @V V \Phi_{\D\D'} V    \\
  \hYeD   @> \tpsi>> \hXD .
\end{CD}
\ee

\begin{remark} The quantum Teichm\"uller space \cite{CF} is defined abstractly by identifying all $\tY^{(2)}(\D)$ via the coordinate change maps $\Theta_{\D \D'}$. Theorem \ref{r.shearchange} allows to  realize the quantum Teichm\"uller space as
a concrete subfield $\ttS^{(2)}$  of the skein skew field $\ttS$, which does not depend on the triangulation.
\end{remark}

\newcommand{\LHS}[1]{\text{LHS of #1}}
\newcommand{\RHS}[1]{\text{RHS of #1}}

\subsection{Quantum trace map and proof of Theorem \ref{thm.1}} Recall that $\ooS= \cS\SM$.
In \cite{BW0}, Bonahon and Wong construct the {\em quantum trace map}, an injective algebra  homomorphism,
$$ \tr_q^\D: \ooS \to \YeD,$$
which is natural with respect to the shear coordinate change, i.e.  diagram \eqref{eq.dia6} is commutative:
\be
\lbl{eq.dia6}
 \begin{CD}  \ooS   @> \tr_q^{\D'}>> \hYeDp \\
 @V
  \id  VV  @V V \Theta_{\D\D'} V    \\
  \ooS   @> \tr_q^\D >> \hYeD .
\end{CD}
\ee
The construction of $\tr_q$ involves many difficult calculations and the way $\tr_q^\D$ was constructed
remains a mystery for the author. Here we show that the quantum trace map is $\vpD : \ooS \to \XD$, via the shear-to-skein map. The following is Theorem \ref{thm.1} of Introduction.

\begin{theorem} \lbl{r.52} Let $\SM$ be a triangulated marked surface with triangulation $\D$, and $\ooS= \cS(\Sigma, \emptyset)$.

(a) In the diagram
\be
\lbl{eq.2heads1}
 \ooS \overset {\vp_\D}\lembed\XD\overset {\psi} {\rembed} \YeD,
 \ee
the image of $\vp_\D$ is contained in the image of $\psi$, i.e.
\be
\lbl{eq.incl1z}
\vp_\D(\ooS  ) \subset \psi (\YeD).\ee

(b) The algebra homomorphism $  \vk_\D: \ooS \to \hYeD$, defined by $\psi^{-1}\circ \vp_\D$, coincides with the quantum trace map of Bonahon and
Wong \cite{BW0}.
\end{theorem}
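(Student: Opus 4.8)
The plan is to reduce both parts to the formula of Theorem~\ref{r.fc} for $\varphi_\D$ on $\D$-simple knots, and then to spread the result over all triangulations using naturality.

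\emph{Part (a).} Since $\psi(\YeD)$ is a subalgebra of $\XD$, by Proposition~\ref{r.generator} it is enough to check that $\varphi_\D(\al)\in\psi(\YeD)$ for every triangulation-simple knot $\al$. First I would treat the case where $\al$ is $\D$-simple and $\D$-normal: Theorem~\ref{r.fc} gives $\varphi_\D(\al)=\sum_{C\in\Col(\al,\D)}x^{CH}$, and the point is that every admissible coloring $C\in\BZ^\oD$ is $\D$-balanced, because a $\D$-simple $\D$-normal knot meets each triangle of $\D$ in exactly $0$ or $2$ of its edges, so each $C(\tau)$ is a sum of $0$ or $2$ entries equal to $\pm1$, hence even. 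Therefore $w_\al:=\sum_{C\in\Col(\al,\D)}y^C$ lies in $\cY^\ev(\D)=\YeD$ and satisfies $\psi(w_\al)=\varphi_\D(\al)$, which also identifies $\psi^{-1}(\varphi_\D(\al))=w_\al$. For a general triangulation-simple knot $\al$ I would pick a triangulation $\D'$ with $\al$ being $\D'$-simple; then $\varphi_\D(\al)=\Phi_{\D\D'}\big(\varphi_{\D'}(\al)\big)=\Phi_{\D\D'}\big(\psi_{\D'}(w^{\D'}_\al)\big)$ by Proposition~\ref{r.42}(b), and the commutative square~\eqref{eq.dia9} rewrites this as $\varphi_\D(\al)=\tpsi\big(\Theta_{\D\D'}(w^{\D'}_\al)\big)$ with $\Theta_{\D\D'}(w^{\D'}_\al)\in\hYeD$. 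Since $\varphi_\D(\al)$ automatically lies in $\XD$, a version of Proposition~\ref{r.Ybl} for the localization $\hYeD$ then forces $\psi^{-1}(\varphi_\D(\al))\in\YeD$, which is~\eqref{eq.incl1z}.

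\emph{Part (b).} I would observe that $\vk_\D=\psi^{-1}\circ\varphi_\D$ and $\tr_q^\D$ are both injective algebra homomorphisms $\ooS\to\YeD\subset\hYeD$, and both are natural for the shear coordinate change: for $\tr_q^\D$ this is diagram~\eqref{eq.dia6}, while for $\vk_\D$ it follows by combining $\varphi_\D=\Phi_{\D\D'}\circ\varphi_{\D'}$ (Proposition~\ref{r.42}(b)) with~\eqref{eq.dia9}, which yields $\vk_\D=\Theta_{\D\D'}\circ\vk_{\D'}$. Consequently, if the two maps agree on a knot $\al$ for one triangulation, they agree on $\al$ for every triangulation. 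By Proposition~\ref{r.generator} and the fact that each triangulation-simple knot is $\D'$-simple for a suitable $\D'$, it therefore suffices to show $\vk_{\D'}(\al)=\tr_q^{\D'}(\al)$ for every $\D'$-simple knot $\al$. From the computation in part (a) we already have $\vk_{\D'}(\al)=\sum_{C\in\Col(\al,\D')}y^C$, so the remaining task is to verify that Bonahon and Wong's map $\tr_q^{\D'}$ assigns exactly this state sum to a $\D'$-simple knot.

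I expect this last verification to be the main obstacle. All the genuine skein-theoretic work is already contained in Theorem~\ref{r.fc}, and everything else above is naturality plus the bookkeeping of $\D$-balanced colorings; but identifying the closed expression $\sum_C y^C$ with the output of the Bonahon--Wong construction on a simple knot requires unwinding their state-sum definition triangle by triangle and reconciling the Weyl-normalization conventions of the two papers — in effect the computation carried out in Section~\ref{sec:generators}. A lesser point to watch in part (a) is that $\psi$ takes values in the enlarged quantum torus $\XhalfD$ rather than in $\XD$, so the descent back to $\YeD$ via Proposition~\ref{r.Ybl} (and its localized form) must be invoked at the correct stage.
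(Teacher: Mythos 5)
Your overall skeleton matches the paper's proof closely: reduce to triangulation-simple knots via Proposition~\ref{r.generator}, handle $\D$-simple knots directly from Theorem~\ref{r.fc}, transport to other triangulations via the commutative square~\eqref{eq.dia9}, and in part (b) match the state sum against the Bonahon--Wong definition and then propagate by naturality. Your direct argument that admissible colorings are $\D$-balanced (a $\D$-simple knot meets each triangle in $0$ or $2$ edges) is correct and slightly more self-contained than the paper's route through the evenness claim in Theorem~\ref{r.fc} combined with Lemma~\ref{r.even1}.

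There is, however, a genuine gap in part (a) at the final descent. Having shown $\varphi_\D(\al)\in\tpsi(\hYeD)$ and $\varphi_\D(\al)\in\XD$, you appeal to ``a version of Proposition~\ref{r.Ybl} for the localization $\hYeD$'' to conclude $\varphi_\D(\al)\in\psi(\YeD)$. But Proposition~\ref{r.Ybl} is a statement about the quantum torus $\cY(\D)$, not about the partially localized algebra $\hYeD=\YeD\,\tYtD$, and the needed claim --- that $\tpsi(\hYeD)\cap\XD=\psi(\YeD)$ --- does not follow formally from it. The paper proves exactly this in Step 3 of its argument: writing $z=uv^{-1}$ with $u\in\cY^\ev(\D)$, $v\in\cY^{(2)}(\D)$, one gets $\psi(u)=z'\psi(v)$ in $\XhalfD$; comparing leading monomials under a total order on $\BZ^\D$ compatible with addition shows $z'$ lies in the $\cR$-span of $\{x^{\bk H}:\bk\in\BZ^\oD\}$, i.e.\ $z'\in\psi(\cY(\D))$, and \emph{only then} does Proposition~\ref{r.Ybl} force each $\bk$ to be balanced. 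Without this highest-order-term argument the inclusion you want is unjustified. For part (b), your identification of the key task --- matching $\sum_C y^C$ with $\tr_q^{\D'}(\al)$ on $\D'$-simple knots --- is exactly what the paper delegates to the special case of \cite[Proposition 29]{BW2}; the paper simply cites it rather than re-deriving it, and notes in a remark that for simple knots with no cabling the verification is essentially immediate from the definitions in \cite{BW0}.
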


\begin{proof} (a) Recall that $\tpsi: \hYeD\to \hXD$ is the natural extension of $\psi$. \\
{\em Step 1.}  Let
$\al$ be a $\D$-simple knot. First by Theorem \ref{r.fc} then by \eqref{eq.44a}, we have
\begin{align}
\vp_\D(\al) &= \sum_{C \in \Col(\al,\D)} x^{C   H}= \sum_{C \in \Col(\al,\D)} \psi (y^{C})
 \lbl{eq.82}.
\end{align}
Note that $y^C \in \cY^\ev$ for each $C \in \Col(\al,\oD)$. Hence,
\eqref{eq.82} implies $\vp_\D(\al) \in \psi(\cY^\ev)$.

{\em Step 2.}
Now assume $\al$ is a triangulation-simple knot, i.e. there is another triangulation $\D'$ such that
 $\al$ is $\D'$-simple. By Step 1,
  $$\vp_{\D'}(\al) \in \psi(\cY^\ev(\D')) \subset \tpsi(\hYeDp).$$
   The commutativity of
 Diagram~\eqref{eq.dia9} shows that $\vpD(\al) \in \tpsi(\hYeD)$.
Since $\ooS$ is generated by triangulation-simple knots (Proposition~\ref{r.generator}), we have a weaker version of \eqref{eq.incl1z}:
\be
\lbl{eq.incl1a}
\vp_\D(\ooS  ) \subset \tpsi (\hYeD).\ee

\def\YtwoD{\cY^{(2)}(\D)}
{\em Step 3.}
Let us now prove \eqref{eq.incl1z}. Due to \eqref{eq.incl1a} and $\vp_\D(\ooS)\subset \XD$, it is enough to show
 that
 $$ \tpsi(\tY^\ev(\D)) \cap \XD = \psi(\cY^\ev(\D)).$$

 Suppose $z\in \tY^\ev(\D)$ and $\tpsi(z)=z'\in \XD$, we need to show $z\in \cY^\ev(\D)$. Then $ z = u v^{-1}$, where $u\in \cY^\ev(\D)$ and $v\in \YtwoD$. We have
 \be
 \lbl{eq.123}
 \psi(u) = z' \psi(v).
 \ee
 All $\psi(u), z', \psi(v)$ are in $\XhalfD$, which has as a basis the set $\{x^\bk, \bk \in \BZ^\D\}$. The two $\psi(u), \psi(v)$ are in $\psi(\cY(\D))$, which has as a basis the set of all $x^\bk$, where $\bk$ runs over the subgroup $(\BZ^\oD)H$ of $\BZ^\D$. Using a total order of $\BZ^\D$ which is compatible with the addition (for example the lexicographic order) to compare the highest order terms in \eqref{eq.123}, we see that $z' \in \psi(\cY(\D))$. This means
  $$ z'= \sum c_i x^{\bk_i H},$$
 where $\bk_i \in \BZ^\oD$ and $c_i\in \cR$. Since $z'\in \XD$, Proposition \ref{r.Ybl} shows that each $\bk_i$ is balance. It follows that $z'= \psi(\sum c_i x^{\bk_i}) \in \psi(\cY^\ev(\D))$. This completes proof of part (a).

(b)  Formula \eqref{eq.82} shows that for any $\D$-simple knot $\al$,
\be
\lbl{eq.qt}
 \varkappa_D(\al) =\sum_{C\in \Col(\al,\D)} y^C  \ \in \cY^\ev(\D),
 \ee
 which is exactly $\tr_q^\D(\al)$, where $\tr_q^\D$ is the Bonahon-Wong quantum trace map, see  \cite[Proposition 29]{BW2}.
 Thus $\varkappa_\D=\tr_q$ on $\D$-simple knots. The commutativity \eqref{eq.dia9} and the naturality of
 the quantum trace map with respect to the shear coordinate change, Equation \eqref{eq.dia6},  then show
 that $\varkappa_\D=\tr_q^\D$ on triangulation-simple knots. Since triangulation-simple knots generate
 $\ooS$, we have $\varkappa_\D=\tr_q^\D$.
\end{proof}

 \begin{remark}
 We need only a special case of \cite[Proposition 29]{BW2}, namely, the case when $\al$ is $\D$-simple, and no cabling is applied to $\al$. Although the proof of \cite[Proposition 29]{BW2} has long calculations, this special case is much simpler and follows almost immediately from the definition of $\tr_q^\D$ in \cite{BW0}.
 \end{remark}

\def\G{{\Gamma}}
\def\hS{\widehat \Sigma}
\def\hD{{\hat \D}}
\def\pD{{\varphi_\D}}
\def\hP{{\widehat P}}
\def\hal{\hat \al}
\def\hQ{{\widehat Q}}

\section{Generalized marked surface, quantum trace map}
\lbl{sec:generators}

Now we return to the case of {\em generalized} marked surface.
Throughout this section we fix a  triangulated generalized marked surface
$\SM$, with   triangulation $\D$.
We use the notation $\ooS:=\cS(\Sigma\setminus \cP, \emptyset)$. We describe a  set of generators for the algebra $\ooS$ and calculate their values under the quantum trace map.

\subsection{Generators for $\ooS$} \lbl{sec.state}
The following was proved in \cite[Lemma 39]{BW2} for the case $\pS=\emptyset$.
\begin{lemma}
The $R$-algebra $\ooS$ is generated by $\cP$-knots $\al$ such that $|\al\cap a|=1 $ for some $a\in \D$.
\end{lemma}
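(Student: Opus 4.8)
The plan is to argue by induction on the total geometric intersection number $n(\al):=\sum_{a\in\D}\mu(\al,a)$ of a $\cP$-knot $\al$ with the edges of $\D$, paralleling the proof of \cite[Lemma 39]{BW2} but in a form that is insensitive to whether $\pS$ is empty. Let $\mathcal G\subseteq\ooS$ be the $\cR$-subalgebra generated by all $\cP$-knots meeting some edge of $\D$ in exactly one point; the goal is $\mathcal G=\ooS$. First I would reduce to a single $\cP$-knot: by the basis theorem (Proposition~\ref{r.basis}) $\ooS$ is $\cR$-spanned by essential $\cP$-links, and the components of such a link are pairwise disjoint simple closed curves in $\Sigma\setminus\cP$, hence can be stacked in $\cS$ without crossings, so each essential link is the product in $\cS$ of its components; it therefore suffices to prove that every $\cP$-knot lies in $\mathcal G$. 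A trivial $\cP$-knot equals the scalar $-q^2-q^{-2}$ and the empty link equals $1$, so these lie in $\mathcal G$; thus I may assume $\al$ is non-trivial, and by the $\D$-normalization recalled in Section~\ref{sec.42} (which uses that a $\cP$-knot admits a representative in minimal position with respect to all edges simultaneously) I may assume $|\al\cap a|=\mu(\al,a)$ for every $a\in\D$. A $\D$-normal knot disjoint from the $1$-skeleton would lie in an open triangle, which is an open disk, and so be trivial; hence $n(\al)\ge 1$, and when $n(\al)=1$ the knot $\al$ meets exactly one edge in exactly one point, so it is already one of the generators. This is the base case.

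For the inductive step suppose $n(\al)\ge 2$. If $\al$ meets some edge in exactly one point then $\al\in\mathcal G$, so assume not; then $\al$ meets some edge $a$ with $|\al\cap a|\ge 2$. Choose two points $p,q$ of $\al\cap a$ that are consecutive along $a$ and let $a_0\subset a$ be the subarc of $a$ between them, so $a_0$ lies in the interior of $a$ and is disjoint from every edge other than $a$. The key local move is a finger move along $a_0$: push a short arc of the strand of $\al$ through $q$ along $a_0$ and over the strand through $p$, creating exactly one crossing of the link diagram near $p$. Applying the skein relation at that crossing gives $\al=q\,\al_{+}+q^{-1}\al_{-}$ in $\cS$, where $\al_{+},\al_{-}$ are the two smoothings; tracing the two reconnections shows that $\al_{+}$ and $\al_{-}$ are embedded multicurves in $\Sigma\setminus\cP$ agreeing with $\al$ outside a thin neighborhood of $a_0$, namely the two results of band surgery on $\al$ along $a_0$.

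It follows that $|\al_{\pm}\cap b|=|\al\cap b|$ for every edge $b\neq a$, while $|\al_{\pm}\cap a|=|\al\cap a|-2$, since the intersection points $p,q$ are destroyed and no new intersection with the $1$-skeleton is created. Hence every component $\gamma$ of $\al_{\pm}$ is a $\cP$-knot with $n(\gamma)\le\sum_{b}|\al_{\pm}\cap b|=n(\al)-2<n(\al)$, so by the inductive hypothesis $\gamma\in\mathcal G$; since $\al_{\pm}$ is the product in $\cS$ of its disjoint components, $\al_{\pm}\in\mathcal G$, and therefore $\al=q\,\al_{+}+q^{-1}\al_{-}\in\mathcal G$. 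This closes the induction and proves the lemma. I expect the only delicate point to be the finger move: one must check that the two smoothings both lower $|\cdot\cap a|$ by exactly $2$ (rather than one of them reproducing $\al$ or leaving the count unchanged) and that the reconnected curves remain embedded in $\Sigma\setminus\cP$ and pick up no new crossings with other edges of $\D$ — a standard resolution argument that is most convincingly checked on a picture. A secondary point worth spelling out is the reduction ``essential link $=$ product of its components'' together with the existence of a simultaneous minimal-position representative used in the $\D$-normalization.
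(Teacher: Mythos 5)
Your overall plan---induction on the total intersection number with the edges of $\D$, driven by the skein relation---is a reasonable one, but the inductive step as written does not go through, and the gap is exactly the point you flag as ``the only delicate point.'' The culprit is the claimed finger move producing \emph{exactly one} crossing near $a_0$. That move does not exist. Fix a small disk $D_0$ containing $a_0$; the two strands of $\al$ through $p$ and $q$ meet $\partial D_0$ in four points, and the pairing of those boundary points by the arcs of $\al\cap D_0$ is an isotopy invariant. For an embedded $\al$ the two arcs inside $D_0$ are unlinked, and any diagram of (a framed link isotopic to) $\al$ that agrees with $\al$ on $\partial D_0$ must have an \emph{even} number of crossings inside $D_0$. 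A local finger move produces $0$ or $2$ crossings, never $1$. With an honest even number of crossings the analysis changes qualitatively: one of the complete resolutions always reproduces $\al$ itself (up to a framing shift and possibly a trivial loop), so the skein relation yields an identity of the form $c(q)\,\al=\text{(lower order terms)}$ in which $c(q)$ is something like $1-q^2$ or $q^2+q^{-2}$, and these are \emph{not} invertible in $\cR=\BZ[q^{\pm 1/8}]$. So you cannot simply divide and conclude $\al\in\mathcal G$. This is precisely the failure mode you worried about (``one of them reproducing $\al$''), and it is not a technicality one can check on a picture: it is forced by the mod-$2$ parity of crossings. Relatedly, your sentence ``$\al_+$ and $\al_-$ are the two results of band surgery on $\al$ along $a_0$'' cannot hold for the two resolutions of a single crossing, since one of those two resolutions is the undoing of the crossing.

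For contrast, the paper does not attempt this direct induction at all. It chooses a maximal subset $\Gamma\subset\oD$ of inner edges such that $\Sigma\setminus\bigcup_{e\in\Gamma}e$ is contractible, invokes Muller's Lemma~A.1 (that $\cP$-knots with $\mu(\al,a)\le 1$ for every $a\in\Gamma$ generate $\ooS$), and then observes that a knot disjoint from all of $\Gamma$ lies in the complementary disk and is therefore trivial; since $\Gamma\subset\D$, the statement follows. The reduction to $\Gamma$ is not cosmetic: it is what makes ``disjoint from $\Gamma$'' force triviality, which is the step that converts ``$\mu\le 1$ everywhere'' into ``$\mu=1$ somewhere.'' If you want to keep a self-contained inductive argument in the style of \cite[Lemma~39]{BW2}, you would need a local move that genuinely lowers intersection with an invertible leading coefficient---for instance, by arranging for $\al$ to appear as a resolution of a \emph{product} of two curves of strictly smaller total intersection number, and checking the coefficient of $\al$ in that expansion is a unit---rather than a single-crossing finger move. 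The reduction to single knots via Proposition~\ref{r.basis} and the base case of your argument are fine.
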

\begin{proof} Let $\Gamma\subset \oS$ be a maximal set having the property that $\Sigma \setminus \bigcup_{e\in \G} e$ is contractible.
In \cite[Appendix A]{Muller} (see
Lemma A.1 there and its proof) it was shown that
the set of all $\cP$-knots $\al$ such that $\mu(\al,a)\le 1$ for all $a\in \Gamma$ generates $\ooS$ as an $\cR$-algebra. If
 a $\cP$-knot $\al$ has $|\al\cap a|=0 $ for all $a\in \G$, then $\al$ is in the complement disk  $\Sigma \setminus \bigcup_{e\in \G} e$, and hence is trivial. It follows that the set of all $\cP$-knots $\al$ such that $\mu(\al,a)=1$ for some $a\in\Gamma$ generates $\ooS$.
\end{proof}

\subsection{States of $\D$-normal knot} \lbl{r.states}

Suppose $\al$ is a $\D$-normal knot, i.e.  it is non-trivial and  $\mu(\al,e)= |\al \cap e|$ for all $e\in \D$.
As usual, $\cE(\al,\D)$ is the set of all edges in $\D$ meeting $\al$ and
 $\cF(\al,\D)$ is
 the set of all triangles meeting $\al$. It is clear  that $\cE(\al,\D) \subset \oD$.
 \FIGc{forbidden2}{Forbidden pair (non-admissible case): $s(\beta\cap a)=-1, s(\beta\cap b)=1$.}{2cm}
\def\Col{\operatorname{St}}

 {\em A state of $\al$} with respect to $\D$ is a map $s: \al \cap E_\D \to \{1,-1\}$, where $E_\D= \bigcup _{e\in \D} e$.
Such a  state $s$  is called {\em admissible} if for every connected component $\beta$ of $\al\cap \tau$, where
  $\tau \in \cF(\al,\D)$, the values of $s$ on the end points of $\beta$ can not be the forbidden
 pair described in Figure \ref{fig:forbidden2}. Let $\Col(\al,\D)$ denote the set of all admissible states
 of  $\al$. For $s \in \Col(\al,\D)$ let $\bk_s \in \BZ^\oD$ be the function defined by
 $$ \bk_s(e) = \sum_{v \in \al \cap e} s(v).$$

\def\aone{{a_{(1)}}}
\def\atwo{{a_{(2)}}}
\def\htau{{\hat \tau}}
\def\hY{\hat \cY}
\subsection{Quantum trace of a knot crossing an edge once}\lbl{sec.73}
Bonahon and Wong \cite{BW0} constructed a quantum trace map
$ \tr_q^\D:\ooS \to \YeD$. Since $\cP$-knots crossing one of the edges of $\oD$ once generate $\ooS$, we want to understand the images of those under $\tr_q^\D$.

\begin{proposition}
\lbl{r.fc2}
Suppose $\al$ is a $\D$-normal $\cP$-knot and $|\al\cap a|=1$, where $a\in \oD$. One has
\be
\lbl{eq.22c}
\tr_q^\D(\al)= \sum_{s\in \Col(\al,\D)} q^{\uu(s)} y^{\bk_s},
\ee
where $\uu(s)\in \frac 12 \BZ$ is defined in Section \ref{sec.uu} below.
\end{proposition}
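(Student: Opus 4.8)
The plan is to read off \eqref{eq.22c} from the skein-theoretic realization of $\tr_q^\D$ proved earlier, after a skein-resolution computation of $\varphi_\D(\al)$ that extends the proof of Theorem~\ref{r.fc} from $\D$-simple to arbitrary $\D$-normal knots. Recall that $\tr_q^\D$ has been identified (Theorems~\ref{thm.1} and \ref{r.main1}) with the composite of the skein-coordinate embedding and the inverse of the shear-to-skein map, and that, writing $\hat\bk\in\BZ^\D$ for the zero-extension of $\bk\in\BZ^\oD$, one has $\psi(y^\bk)=x^{\bk H}=x^{\hat\bk Q}$ by \eqref{eq.44a}. Thus the whole statement reduces to the single identity
\be
\lbl{eq.fc2pf}
\varphi_\D(\al)\;=\;\sum_{s\in\Col(\al,\D)}q^{\uu(s)}\,\psi(y^{\bk_s})\;=\;\sum_{s\in\Col(\al,\D)}q^{\uu(s)}\,x^{\hat\bk_s Q}\qquad\text{in }\XhD.
\ee
Indeed, \eqref{eq.fc2pf} exhibits $\varphi_\D(\al)$ as $\psi$ of the element $\sum_s q^{\uu(s)}y^{\bk_s}$, which lies in $\YeD$ since each $\bk_s$ is $\D$-balanced (because $\al\cap\tau$ is a disjoint union of arcs, so $\bk_s(\tau)\equiv\sum_{e\text{ edge of }\tau}|\al\cap e|\equiv 0\pmod 2$); applying $\psi^{-1}$ to $\varphi_\D(\al)$, or running the analogous truncated argument of Theorem~\ref{thm.3} for a generalized marked surface, then gives \eqref{eq.22c}. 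The hypothesis $|\al\cap a|=1$ plays no role in this argument: it is used in Section~\ref{sec.state} to exhibit a generating set of $\ooS$, and possibly in the definition of $\uu(s)$ in Section~\ref{sec.uu}, but \eqref{eq.fc2pf} holds for every $\D$-normal knot.

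To prove \eqref{eq.fc2pf} I would rerun the argument of Theorem~\ref{r.fc}. Take a $\D$-normal representative of $\al$, so that $\cE(\al,\D)\subset\oD$. For each $e\in\cE(\al,\D)$ take $|\al\cap e|$ parallel copies of $e$ inside $\Sigma\setminus\bigcup_{e'\neq e}e'$; together they form a $\cP$-link $E$ with $\varphi_\D(E)\beq x^{2\bk_\al}$ (recall $\bk_\al(e)=\mu(\al,e)=|\al\cap e|$), a normalized monomial in the $X_e=x_e^2$. Stack $\al$ above $E$ and resolve every crossing with the skein relation: the smoothings are indexed exactly by the states $s:\al\cap E_\D\to\{\pm1\}$, so $\al E\beq\sum_s q^{\|s\|}L_s$ with $\|s\|=\sum_v s(v)$ and sign conventions as in Figure~\ref{fig:smoothing}. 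Up to powers of $q$, $L_s$ is the product over the triangles $\tau\in\cF(\al,\D)$ of the local resolutions of the components of $\al\cap\tau$; a component $\beta\subset\tau$ with endpoints on edges $a,b$ of $\tau$ resolves to a trivial arc (so $L_s=0$) precisely when $s|_{\partial\beta}$ is the forbidden pair of Figure~\ref{fig:forbidden2}, and otherwise to a single $\cP$-arc $\cP$-isotopic to one of the edges of $\tau$, determined by $s|_{\partial\beta}$ exactly as in \eqref{eq.43}. Hence $L_s=0$ unless $s\in\Col(\al,\D)$; and for admissible $s$, using the explicit matrix $Q_\tau$ of \eqref{eq.Qtau}, Lemma~\ref{r.Qtau0}, and $e=x_e^2$, each per-triangle factor (the product of the resolved arcs lying in $\tau$, times the inverse of the square-root edge variables of $\tau$ supplied by $E$) equals $x^{\hat\bk_s Q_\tau}$ up to a power of $q$; multiplying over all triangles and using \eqref{eq.01} gives the $s$-summand of $\al$ in the form $\al_s=q^{\uu(s)}x^{\hat\bk_s Q}$, where $\uu(s)\in\tfrac12\BZ$ is the total power of $q$ collected from the smoothings ($q^{\|s\|}$), from the Weyl normalizations of $E$, of each $L_s$, and of the monomials $\D^\bk=[\prod_e e^{\bk(e)}]$ entering via \eqref{eq.simul}, and from the reorderings needed to pass from ordered to normalized products. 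This $q$-power is precisely the exponent $\uu(s)$ specified in Section~\ref{sec.uu}, which proves \eqref{eq.fc2pf}.

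The main obstacle is the $q$-power bookkeeping in the previous paragraph. In Theorem~\ref{r.fc} reflection invariance (Lemma~\ref{r.reflection}) forced every such exponent to vanish, because distinct admissible colorings produced distinct monomials; here distinct admissible states may yield the same vector $\bk_s$, so reflection invariance gives only that $\sum_{s:\bk_s=\bk}q^{\uu(s)}$ is symmetric under $q\mapsto q^{-1}$, and each contribution to $\uu(s)$ must be computed explicitly and matched against the local recipe for $\uu(s)$ in Section~\ref{sec.uu}. A secondary point that needs care is the order in which the several $\cP$-arcs occupying one triangle are stacked when $\al$ traverses $\tau$ more than once; this changes the $q$-power $\uu(s)$ but not the exponent vector $\hat\bk_s Q$, since the latter depends on $s$ only through the per-edge sums $\bk_s(e)=\sum_{v\in\al\cap e}s(v)$.
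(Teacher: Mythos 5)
Your proposal takes a genuinely different route from the paper. The paper proves Proposition \ref{r.fc2} by directly unwinding the definition of $\tr_q^\D$ from \cite[Proposition 29]{BW2}: it writes $\iota(\tr_q^\D(\al))$ as a product of per-triangle normalized pairs (Equation \eqref{eq.22}), reorganizes that product via two applications of the Weyl normalization to collect $q^{\uu_1(s)}$ and $q^{\uu_2(s)}$, and then observes $\uu=\uu_1+\uu_2$. The quantity $\uu(s)$ in Section \ref{sec.uu} is, in effect, defined so that this calculation comes out clean. You instead propose to rerun the skein-resolution argument of Theorem \ref{r.fc} to compute $\varphi_\D(\al)$ and then pass through $\tr_q^\D=\psi^{-1}\circ\varphi_\D$; the paper acknowledges in the remark following the Corollary that such an approach can be made to work for marked surfaces (and would yield an alternative proof of Theorem \ref{r.52}), so the strategy is not misguided. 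But as written it has two serious gaps.

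First, a scope problem. Proposition \ref{r.fc2} is stated for a triangulated \emph{generalized} marked surface, allowing interior marked points and even $\pS=\emptyset$. In that generality $\varphi_\D$, $\psi$, $\XD$, and $\XhD$ simply do not exist; they are only defined when $\cP\subset\pS$. Your Equation \eqref{eq.fc2pf}, which lives in $\XhD$, therefore addresses only the marked-surface case. The fallback you offer, ``running the analogous truncated argument of Theorem \ref{thm.3},'' is circular: Theorem \ref{r.main1}$\;(=\,$Theorem \ref{thm.3}$)$ is proved via Proposition \ref{r.68}, which rests on \eqref{eq.m1}, which the paper deduces from Proposition \ref{r.fc2}. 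So you cannot use Theorem \ref{thm.3} to prove the proposition.

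Second, the decisive step is asserted rather than proved. You identify ``the $q$-power bookkeeping'' as the main obstacle and then conclude by declaring that the accumulated power ``is precisely the exponent $\uu(s)$ specified in Section \ref{sec.uu}.'' That identification is exactly what has to be established. As you yourself note, the shortcut used in Theorem \ref{r.fc} — reflection invariance forcing the $q$-powers to vanish — fails here because distinct admissible states can give the same vector $\bk_s$ (already when $|\al\cap e|\ge 3$ for some edge $e$), so Lemma \ref{r.reflection} only constrains $\sum_{s:\bk_s=\bk}q^{\uu(s)}$ and cannot pin down the individual exponents. One would have to carry out, crossing by crossing, the comparison with the definition \eqref{eq.ff}--\eqref{eq.uu}, and that work is not done. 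A related subtlety you wave at but do not resolve is the stacking order of the several copies of $\al\cap\tau$ inside a single triangle. Finally, your claim that ``the hypothesis $|\al\cap a|=1$ plays no role'' is not quite right: that intersection point is the base point that orders $V$ and hence defines the relation $\ll$ in Section \ref{sec.uu}, without which $\uu(s)$ is not even defined.
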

The proof is straight forward from the definition of $\tr_q^\D$, but first we have to prepare some definitions in Section \ref{sec.def1}--\ref{sec.uu}, and then prove the proposition in Section \ref{sec.def3}.

\subsection{Face matrix revisited} \lbl{sec.def1}
By splitting $\Sigma$ along all inner edges, we get $\hS$ which is the disjoint union of triangles $\htau$, one  for each triangle $\tau\in \cF(\D)$,
 with a gluing back map $\pr: \hS \to \Sigma$. Let $\hD$ be the set of all edges of $\hS$. Then $\pr: \hS \to \Sigma$ induces a map $\pr_*: \hD \to \D$, where
$(\pr_*)^{-1}(e)$ consists of the two edges splitted from $e$ for all $e\in \oD$.
We call $\htau$ the lift of $\tau$,  and a lift of $e\in \D$ is one of the edges in $(\pr_*)^{-1}(e)$.

All the edges and vertices of $\htau$ are distinct. Let $\hQ=Q(\hS,\hD)\in \Mat(\hD \times \hD,\BZ)$ be the face matrix of the disconnected triangulated surface $(\hS,\hD)$, i.e.
$\hQ= \sum_{\tau \in \cF(\D)} Q_\htau$, where $Q_\htau\in \Mat(\hD \times \hD,\BZ)$, with counterclockwise edges $a,b,c$,  is the 0-exention of the
 $\{a,b,c\}\times \{a,b,c\}$-matrix given by formula~\eqref{eq.Qtau}.

 The quantum torus $\hY:= \bT(\hQ, q^{-1}, y)$
has the set of basis variables parameterized by edges of all $\htau$. There is an embedding  (a  multiplicatively linear homomorphism of Lemma \ref{r.homo})
\be \lbl{eq.22d}
 \iota : \cY(\D) \embed \hY, \quad   \iota(y_e) = [y_{e'} y_{e''}],
\ee where $e', e''$ are lifts of $e$.

\def\ff{\mathfrak g}
\def\tal{\tilde \al}

\subsection{Definition of $\uu$} \lbl{sec.uu}
Fix an orientation of $\al$.
Let $V$ be the lift (i.e. the preimage under $\pr$) of  $ \bigcup_{e \in \D} (\al \cap e)$. For $v\in V$ let $e(v) \in \hD$ be the edge containing $v$, and $s(v)= s(\pr(v))$. The orientation of $\al$ allows us to define order on $V$ as follows.
The edges in $\oD$ cut $\al$ into intervals $\al_1,\dots,\al_k$, which are numerated so that if one begins at $\al\cap a$ and follows the orientation, one encounters $\al_1,\dots, \al_k$ in that order. If $\al_i\subset \tau\in \cF(\D)$, then $\al_i$ lifts to an interval $\tal_i\subset \htau$. Let $U_i=(u'_i,u''_i)$, where $u'_i,u''_i$ are  respectively be the beginning point and the ending point of $\tal_i$. Then we order $V$ so that
$$ u'_1 < u''_1 < u'_2 < u''_2 <\dots < u'_k < u''_k.$$

For $u,v\in V$ denote $u \ll v$ if $u < v$ and $(u,v)\neq U_i$ for all $i$.
For $\tau\in \cF(\D)$ let $V_\tau=V\cap \htau$. Define
\begin{align}
\lbl{eq.ff}
\ff(\tau;s) &:= -\frac 12\sum_{u,v\in V, \ u \ll v} Q_\htau(e(u),e(v)) s(u) s(v)
\\
&= -\frac 12\sum_{u,v\in V_\tau, \ u \ll v} Q_\htau(e(u),e(v)) s(u) s(v),
\end{align}
where the second equality holds since $Q_\htau(e(u),e(v))=0$ unless $u,v \in V_\tau$.
Define
\be
\lbl{eq.uu}
 \uu(s) := \sum_{\tau \in \cF(\D)} \ff(\tau;s).
 \ee

 \subsection{Proof of Proposition \ref{r.fc2}} \lbl{sec.def3}

\def\rprod{\operatorname{
\overrightarrow\prod}
}

From  \cite[Proposition 29]{BW2},
\be
\lbl{eq.22}
\iota\left( \tr_q^\D(\al)\right)= \sum_{s\in \Col(\al,\D)} z_0(s) z_1(s) \dots z_k(s),
\ee
where $z_i(s)= [(y_{e(u)})^{s(u)} (y_{e(v)})^{s(v)}]$, with $U_i= (u,v)$.
By definition of the normalized product,
\be
\lbl{eq.22a}
 \iota\left( \tr_q^\D(\al)\right)= \sum_{s\in \Col(\al,\D)} q^{\uu_1(s)} \rprod_{v\in V} (y_{e(v)})^{s(v)},
\ee
where $\rprod_{v\in V}$ is the product  in the increasing order (from left to right), and
 \begin{align}
 \lbl{eq.u1}
 2\uu_1(s)& =     \sum_{(u,v)\in \{U_1,\dots,U_k\}} \hQ(e(u), e(v)) s(u) s(v).
 \end{align}
By the definition of the normalized product,
\begin{align}
\rprod_{v\in V} ((y_{e(v)})^{s(v)}  &= q^{\uu_2(s)}\left [ \rprod_{v\in V} (y_{e(v)})^{s(v)}  \right ]
= q^{\uu_2(s)} \iota\left (y^{\bk_s} \right), \lbl{eq.22b}
\end{align}
where
\be
\lbl{eq.u2}
2\uu_2(s) = -\sum_{u,v \in V, \ u<v} \hQ(e(u), e(v)) s(u) s(v).
  \ee
  Using \eqref{eq.22b} in \eqref{eq.22a} and $\uu= \uu_1 +\uu_2$, we get
\eqref{eq.22c}. This completes the proof of Proposition \ref{r.fc2}.

\def\oC{\mathring C}

\def\deq{\overset \bullet =}

\def\hQ{{\hat Q}}

\begin{corollary} Suppose the assumption of Proposition \ref{r.fc2}. Assume that $\SM$ is a marked surface, i.e. $\cP \subset \pS$. Then
\be
\lbl{eq.m1}
\pD(\al)  =\sum_{s \in \Col(\al,\D)} q^{\uu(s)} x ^{\bk_s H}.
\ee
\end{corollary}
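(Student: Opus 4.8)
The plan is to obtain \eqref{eq.m1} by pushing the formula of Proposition \ref{r.fc2} through the shear-to-skein map, using the identification of $\varphi_\D$ with $\psi$ composed with the quantum trace map. Note first that the hypothesis $\cP\subset\pS$ is exactly what makes $\SM$ a marked surface in the sense of Section \ref{sec:markedsur}, so that the vertex matrix $P$, the algebra $\XhalfD$, the map $\psi\colon\cY(\D)\to\XhalfD$ of Subsection \ref{sec.psi}, and Theorem \ref{r.52} are all available; in the purely ``generalized'' case (interior marked points present) none of this machinery exists and indeed $\varphi_\D$ must be replaced by $\bvpD$, which is why the corollary is stated under the extra assumption.

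First I would record the consequence of Theorem \ref{r.52} that is needed: by part (a) one has $\varphi_\D(\ooS)\subset\psi(\YeD)$, and by part (b) the map $\psi^{-1}\circ\varphi_\D$ equals the Bonahon--Wong quantum trace map $\tr_q^\D$ on all of $\ooS$; hence $\varphi_\D=\psi\circ\tr_q^\D$ as $\cR$-algebra maps $\ooS\to\XD$. I would stress that this holds for \emph{every} $\cP$-knot $\al$, not only $\D$-simple or triangulation-simple ones, since Theorem \ref{r.52}(b) was proved on all of $\ooS$ through the fact that triangulation-simple knots generate $\ooS$ together with naturality under flips; this is precisely what lets us handle an $\al$ which is $\D$-normal but crosses various edges several times.

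Next, $\al\in\ooS$ and $\tr_q^\D(\al)\in\YeD\subset\cY(\D)$, so the (non-extended) map $\psi$ applies to it. Substituting the formula of Proposition \ref{r.fc2}, using that $\psi$ is $\cR$-linear so the scalars $q^{\uu(s)}\in\cR=\BZ[q^{\pm 1/8}]$ pass through the sum, and applying the defining property $\psi(y^{\bk})=x^{\bk H}$ of \eqref{eq.44a} with $\bk=\bk_s\in\BZ^\oD$, one gets
$$
\varphi_\D(\al)=\psi\bigl(\tr_q^\D(\al)\bigr)=\psi\Bigl(\sum_{s\in\Col(\al,\D)}q^{\uu(s)}y^{\bk_s}\Bigr)=\sum_{s\in\Col(\al,\D)}q^{\uu(s)}\,x^{\bk_s H},
$$
which is \eqref{eq.m1}.

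There is no genuine obstacle here: the corollary is a formal consequence of Proposition \ref{r.fc2} and Theorem \ref{r.52}. The only points that need a moment's care are bookkeeping ones --- confirming the hypothesis places us in the marked-surface setting where Theorem \ref{r.52} holds, noting that $\tr_q^\D(\al)$ lands in $\YeD$ so that the ordinary $\psi$ of Subsection \ref{sec.psi} (and not its skew-field extension $\tpsi$) is what is being applied, and observing that the half-integer powers $q^{\uu(s)}$ are honest elements of $\cR$. As an alternative one could re-run the crossing-resolution argument of Theorem \ref{r.fc} directly for the $\D$-normal knot $\al$; but since an arc of $\al$ may cross an edge of $\D$ more than once, the powers of $q$ created by the Weyl normalization no longer all cancel via reflection invariance, and tracking them by hand is exactly the computation already encoded in $\uu(s)$ --- routing through Proposition \ref{r.fc2} simply reuses that work.
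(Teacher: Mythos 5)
Your proof is correct and is essentially identical to the paper's one-line argument, which simply invokes $\varphi_\D=\psi\circ\tr_q^\D$ from Theorem \ref{r.52} and pushes the formula of Proposition \ref{r.fc2} through $\psi$ using $\psi(y^{\bk})=x^{\bk H}$. Your extra bookkeeping remarks (why $\cP\subset\pS$ is needed, that the ordinary $\psi$ rather than $\tpsi$ suffices, that $q^{\uu(s)}\in\cR$) are sound but not a departure from the paper's route.
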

\begin{proof}
Because $\varphi_\D= \psi \circ \tr_q^\D$ by Theorem \ref{r.52}, Identity \eqref{eq.m1}
follows from \eqref{eq.22c}.
\end{proof}

\begin{remark}
Again  we need only a special case of \cite[Proposition 29]{BW2}  when  no cabling is applied to $\al$. This special case is very simple and follows almost immediately from the definition of $\tr_q^\D$ in \cite{BW0}.
 \end{remark}

\begin{remark}
 The definition of $\uu(s)$ a priori depends on the choice of an edge $a$ such that $\mu(\al,a)=1$ and an orientation of $\al$. This does not affects what follows.
\end{remark}

\begin{remark}

One can also directly prove Identity \eqref{eq.m1} without using Theorem \ref{r.52} by extending the calculation used in the proof Theorem \ref{r.fc}. This way we can get a new proof of Theorem~\ref{r.52} without using the change of basis maps $\Phi_{\D \D'}$.
\end{remark}

\def\YtL{\cY^{(2)}(\La)}
\def\YL{\cY(\La)}
\def\Ld{{\Lambda_\partial}}
\def\Dd{{\D_\partial}}
\def\oL{{\mathring \La}}

\def\bQ{\bar Q}

\section{Triangulated generalized marked surfaces}\lbl{sec:puncture}
In Section \ref{sec:shear} we showed that the quantum trace map of Bonahon and Wong can be recovered from the natural embedding $\varphi_\D: \ooS \to \XD$ via the shear-to-skein  map, for  triangulated marked surfaces. In this section we establish a similar result for  triangulated generalized marked surfaces. This includes the case of  a triangulated punctured surface without boundary, the original case considered in \cite{BW0} and discussed in Introduction.

Throughout this section we fix a triangulated generalized marked surface $(\bsS,\cP)$ with  triangulation $\La$.  This means $\bsS$ is a compact oriented connected surface with (possibly empty) boundary $\partial \bsS$, $\cP\subset \bsS$ is a finite set, and $\La$ is a $\cP$-triangulation of $\bsS$. Let $\oL\subset \La$ be the subset of inner edges, and $\Ld= \La \setminus \oL$ be the set of boundary edges. Let $\ocP$ be the set of interior marked points, i.e. $\ocP = \cP \setminus \partial \bsS$, and $\sS=\bsS\setminus \ocP$.

\def\bH{\bar H}
\def\bD{{\La}}

The various versions of Chekhov-Fock algebras $\YtL \subset \YeL\subset \YL$ were defined in Section~\ref{sec.CF}.
Let us recall the definition of $\cY(\La)$ here. Let $\bQ$ be the $\oL\times \oL$ submatrix of the face matrix $Q_\La$.
Then  $\YL$ is the quantum torus $\bT(\bQ,q^{-1},z)$:
$$ \YL= \cR\la y_a^{\pm 1}, a \in \oL \ra /( y_a y_b = q^{- \bQ(a,b)} y_b y_a ).$$

\begin{remark} When $\partial \bsS =\emptyset$, our $\cY^{(2)}(\La)$ and $\cY^\ev(\La)$ are respectively  the Chekhov-Fock   algebra and
 Chekhov-Fock square root algebra $\cZ^\omega_\lambda$ of \cite{BW0},
with our $q,\Lambda$ equalling, respectively, $\omega^2,\lambda$ of \cite{BW0}.
\end{remark}

\subsection{Associated marked surface}\lbl{sec.ass}
For each interior marked point $p\in \ocP$ choose a small  disk $D_p\subset \bsS$ such that $p \in \partial D_p$.
Let $\Sigma$ be the surface obtained from $\bsS$ by removing the interior of all $D_p, p\in \ocP$.
We call $(\Sigma, \cP)$ the  {\em marked surface associated to the generalized marked surface $(\bsS,\cP)$}.

It is clear that $\cS(\Sigma,\emptyset)$ is canonically isomorphic to $\cS(\sS,\emptyset)$; the isomorphism is given by the embeddings
 $(\Sigma \setminus \ocP) \embed \sS$ and $(\Sigma \setminus \ocP) \embed \Sigma$ which induce  isomorphisms $ \cS(\Sigma,\emptyset) \cong \cS(\Sigma \setminus \ocP, \emptyset)\cong  \cS(\sS,\emptyset)$. We thus identify  $\cS(\Sigma,\emptyset)$ with $\cS(\sS,\emptyset)$, and use $\ooS$ to denote any of them. We also simply use $\cS$ to denote $\cS\SM$.

For each $p\in \ocP$ let $c_p$ be the boundary loop (which is $\partial D_p$) based  at $p$.  Note  that
each $c_p$ is in the center of $\cS=\cS(\Sigma, \cP)$.

 A triangulation of $\SM$ can be constructed beginning with $\La$,  as follows.
Since $\mu(c_p,a)=0$ for any $a\in \La$,
after an isotopy (of edges in $\oL$) we can assume that each $c_p$ does not intersect the interior of any $a\in \La$, i.e. the interior of each disk $D_p$ is inside some triangle of $\La$.
The set  $\La \cup \{  c_p \mid p \in \ocP\}$, considered as set of $\cP$-arcs in $\Sigma$, is not a maximal collection of pairwise non-intersecting and pairwise non-$\cP$-isotopic   $\cP$-arcs, and hence can be extended (in many ways) to a triangulation $\D$ of $\SM$.

Here is a  concrete construction of $\D$.
Suppose a triangle $\tau\in \cF(\La)$  contains   $k$ $D_p$'s. Here $k$ can be 0, 1, 2, or 3.
After removing the interiors of each $D_p$ from $\tau$ we get a $\cP$-polygon, and we add $k$ of its diagonals to triangulate it, creating $k+1$ triangles for $\D$. See Figure \ref{fig:triangulation} for the case $k=1$ and $k=2$,
where one of the many choices of adding diagonals is presented. Then $\D$ is obtained by doing this to all triangles of $\La$. We call $\D$ a {\em lift} of $\La$.

Every edge of $\D$, except for the $c_p$ with $p\in \ocP$, is $\cP$-isotopic in $\bsS$ to an edge in $\La$. Let $\omega: \D \setminus \{ c_p \mid p \in \ocP\} \to \La$ be the map defined by $w(a)$ is $\cP$-isotopic in $\bsS$ to $a$. For example, in  Figure \ref{fig:triangulation}, $\omega(d)=a$. Note that $w(a)=a$ if $a\in \Lambda$, i.e. $\omega$ is a contraction.

\FIGc{triangulation}{Adding diagonals to get a triangulation $\D$ of $\SM$: the case when $\tau$ has one $D_p$ (left) or two $D_p$'s (right). We have $\omega(d)=a$. The triangle with edges $a,d,c_p$ is a fake triangle, while the right picture has two fake triangles. }{2.5cm}

 The triangle of $\D$ having $c_p$ as an edge,  where $p\in \ocP$, is denoted by $\tau_p$ and is a called a {\em fake triangle}, see Figure \ref{fig:triangulation}.

\subsection{Skein coordinates}
Let $P\in \Mat(\D \times \D,\BZ)$ be the vertex matrix of $\D$ (defined in Subsection~\ref{sec.vmatrix}).
Recall that $\XD=\bT(P,q,X)$.
As a based $\cR$-module, $\XD$ has preferred base
 $\{ X^\bk, \bk \in \BZ^\D\}$.
Let $\bXD $ be the based $\cR$-submodule of $\sX(\D)$ with preferred base the set of all $X^\bk$ such that
$\bk(c_p) = 0$ for all $p\in \ocP$.
Let $\pi: \XD \to \bXD$ be the
canonical projection (see Section~\ref{sec.based}), which is an $\cR$-module homomorphism but not an
$\cR$-algebra homomorphism.

Recall that we have a natural embedding $\vp_\D: \cS \embed \sX(\D)$. To simplify notations in this
section we will
    identify $\cS$ with a subset of $\sX(\D)$ via $\vp_\D$. Thus, we have $a= X_a$ for any $a\in
    \D$.
{\em The skein coordinate map}
$\bvp_\D: \ooS \to \bXD$ is defined to be  the composition
\be
\bvp_\D: \ooS  \overset{\vp_\D}{\lembed} \XD \overset{\pi}{\longrightarrow}\bXD.
\ee
\begin{remark}  If $\cS'( \bar \fS,\cP)= \cS(\fS,\cP)/(c_p)$, where $(c_p)$ is the ideal generated by $c_p, p\in \cP$
then $\varphi_\D$ descends to a map $  \cS'( \bar \fS,\cP) \to
\bXD$, which  should be considered as the skein coordinate map of $\cS'( \bar \fS,\cP)$. Essentially, we work
with    $  \cS'( \bar \fS,\cP) $, instead of $  \cS\SM$, in the case of generalized marked surfaces.
\end{remark}

\subsection{Shear-to-skein map}  Let
$\oQ$ be the $\oD\times \oD$ submatrix of $Q_\D$. Recall the $\bQ$ is the $\oL\times \oL$ submatrix of $Q_\La$.
Let $\Omega\in\Mat(\oL \times \oD,\BZ)$ be the matrix defined by
 $\Omega(a,b)= 1$ if $\omega(b)=a$ and $\Omega(a,b)=0$ otherwise.

\begin{lemma}
\lbl{r.100a}
One has
\be
\lbl{eq.55aa}
\bQ= \Omega \oQ \Omega^\dag.
\ee
\end{lemma}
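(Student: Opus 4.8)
The plan is to reduce the claimed matrix identity \eqref{eq.55aa} to a local check on each triangle of $\La$, using the decomposition of the face matrix $Q$ into triangle contributions $Q_\tau$ and the way the lift $\D$ refines $\La$. First I would observe that $\Omega$ is simply the ``collapsing'' map: for each edge $b\in\oD$ which is not a boundary loop $c_p$, the single row index $\omega(b)=a\in\oL$ is hit, while the columns indexed by the boundary loops $c_p$ contribute nothing to $\Omega$ (they are not in the domain of $\omega$, so no $a$ has $\Omega(a,c_p)=1$). Hence for $a,a'\in\oL$ one has
\[
(\Omega\,\oQ\,\Omega^\dag)(a,a') \;=\; \sum_{\substack{b,b'\in\oD\\ \omega(b)=a,\ \omega(b')=a'}} \oQ(b,b'),
\]
so the whole statement says: summing the entries of the refined face matrix over the preimages of a pair of $\La$-edges recovers the corresponding entry of $\bQ$.

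Next I would use $Q_\D=\sum_{\sigma\in\cF(\D)}Q_\sigma$ and $Q_\La=\sum_{\tau\in\cF(\La)}Q_\tau$, together with the combinatorial description of $\D$ from Section~\ref{sec.ass}: each triangle $\tau\in\cF(\La)$ containing $k$ disks $D_p$ is subdivided into $k+1$ triangles of $\D$, exactly $k$ of which are the fake triangles $\tau_p$ (having a boundary loop $c_p$ as an edge), and one ``genuine'' triangle $\tau^\circ$ whose three edges are lifts of the three edges of $\tau$. The key point is that for a fake triangle $\tau_p$, two of its three edges are the loop $c_p$ and a short arc $d$ with $\omega(d)=\omega(a)$ for the unique $\La$-edge $a$ it sits against; in the formula \eqref{eq.Qtau} for $Q_{\tau_p}$, any entry involving $c_p$ is killed when we conjugate by $\Omega^\dag$ (since $c_p$ is not in the domain of $\omega$), and the remaining entries of $Q_{\tau_p}$ cancel among themselves after the collapse because they involve pairs of edges mapping to the same $\La$-edge. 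I would make this precise by checking, triangle by triangle, that
\[
\Omega\Big(\textstyle\sum_{\sigma\subset\tau}Q_\sigma\Big)\Omega^\dag \;=\; Q_\tau
\]
as matrices on $\oL\times\oL$ (restricting attention to the three edges of $\tau$ and their preimages), for each of the cases $k=0,1,2,3$; summing over $\tau\in\cF(\La)$ then gives \eqref{eq.55aa} for the full submatrices. Equivalently, one can phrase this as: the map $\pr\colon \hS\to\Sigma$-style collapse of disks is a "morphism of triangulated surfaces" that is compatible with the face-matrix construction.

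The main obstacle is the bookkeeping in the fake-triangle case: one must verify that after collapsing by $\omega$, the $\pm1$ contributions coming from the subdivided portion of a triangle $\tau$ with $k\ge 1$ disks reassemble \emph{exactly} into the $+1$/$-1$ pattern \eqref{eq.Qtau} for $\tau$ itself, with no leftover terms and no spurious diagonal entries (in particular one should double-check that an edge $a\in\La$ appearing with multiplicity $2$ among the preimages—e.g. when $a$ borders two subdivided triangles of $\La$—still gives the antisymmetric matrix $\bQ$ rather than picking up a factor). A clean way to organize this is to note that $\Omega$ identifies $\BZ^{\oD}$ (modulo the $c_p$-coordinates) with $\BZ^{\oL}$ and that the associated bilinear form $\langle\,\cdot\,,\cdot\,\rangle_{\oQ}$ pulls back to $\langle\,\cdot\,,\cdot\,\rangle_{\bQ}$, which can be seen geometrically: the signed count of how two families of parallel copies of $\La$-edges wind around shared vertices is unchanged by puncturing a disk in the interior of a triangle, since the disk boundary $c_p$ and the new diagonals contribute in canceling pairs. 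Once the local identity is established, the global statement is immediate from additivity of both $Q_\D$ and $Q_\La$ over their triangles.
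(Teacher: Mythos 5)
Your proof is correct and follows essentially the same route as the paper's. The paper's proof is very terse: it records the entrywise identity $\bQ(a,b)=\sum_{a'\in\omega^{-1}(a)}\sum_{b'\in\omega^{-1}(b)}\oQ(a',b')$ and then says this ``follows easily from the explicit definition of $\oQ(a,b)$ and $\bQ(a,b)$.'' Your triangle-by-triangle decomposition, matching $\sum_{\sigma\subset\tau}Q_\sigma$ against $Q_\tau$ and checking that contributions from fake triangles cancel, is precisely the unpacking of that ``follows easily''; it is a helpful and correct elaboration, not a different argument.

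One small bookkeeping imprecision in your write-up: the loops $c_p$ are boundary $\cP$-arcs of $\D$ (they lie on $\pS$), so $c_p\notin\oD$. Hence the rows/columns of $Q_\D$ indexed by $c_p$ are already absent from $\oQ$ --- they are dropped when forming the $\oD\times\oD$ submatrix, not at the stage of conjugating by $\Omega^\dag$ (and $c_p$ is simply not a column index of $\Omega$ at all). The conclusion is unchanged, but the mechanism is slightly different from what you describe. With that clarified, the remaining cancellations in a fake triangle $\tau_p$ come from the antisymmetry $\oQ(e'_p,e''_p)+\oQ(e''_p,e'_p)=0$, exactly as you note, and your local check reassembles to the paper's entrywise identity.
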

\begin{proof}
Observe that
$$
\bQ(a,b)=\sum_{a'\in \omega^{-1}(a)} \sum_{b'\in \omega^{-1}(b)} \oQ(a',b'),
$$
which follows  easily from the explicit definition of $\oQ(a,b)$ and $\bQ(a,b)$. This is equivalent to
\eqref{eq.55aa}.
\end{proof}

Recall
$H$ is the $\oD\times \D$ submatrix of $Q_\D$.
Define
$\bH\in \Mat(\oL \times \D, \BZ)$ by
\be
\lbl{eq.55b}
\bH = \Omega H.
\ee
In other words, the $a$-row $\bH(a)$ of the matrix $\bH$ is given by
\be \lbl{eq.55g}
\bH(a)= \sum_{a'\in \omega^{-1}(a)} H(a')= \sum_{a'\in \omega^{-1}(a)} Q_\D(a').\ee

\begin{lemma}
\lbl{r.100}
(a) One has
\be
\bH P \bH^\dag = - 4 \bQ.
\ee

(b) The rank of $\bH$ is $|\oL|$.
\end{lemma}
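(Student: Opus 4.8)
The plan is to deduce both parts purely formally from the facts already established for the associated marked surface $\SM$ — namely $HPH^\dag = -4\oQ$, the identity $PH^\dag = -4\,\id_{\D\times\oD}$ from Equation~\eqref{eq.61}, and $\rk(H)=|\oD|$ (all from Lemma~\ref{r.51a}) — together with the combinatorial identity $\bQ=\Omega\oQ\Omega^\dag$ of Lemma~\ref{r.100a}. The unifying observation is that passing from the marked‑surface data $(H,\oQ)$ to the generalized‑marked‑surface data $(\bH,\bQ)$ is exactly left multiplication by $\Omega$, since $\bH=\Omega H$ by \eqref{eq.55b}.

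For part (a) I would simply compute
$$\bH P \bH^\dag = \Omega\,(HPH^\dag)\,\Omega^\dag = \Omega(-4\oQ)\Omega^\dag = -4\,\Omega\oQ\Omega^\dag = -4\bQ,$$
using Lemma~\ref{r.51a}(a) in the middle step and Lemma~\ref{r.100a} in the last; this is nothing beyond associativity of matrix multiplication.

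For part (b), the upper bound $\rk(\bH)\le|\oL|$ is immediate since $\bH$ has row set $\oL$. For the reverse inequality I would transpose $PH^\dag=-4\,\id_{\D\times\oD}$ and use $P^\dag=-P$ to get $HP=4\,\id_{\oD\times\D}$, hence $\bH P=\Omega HP=4\,\Omega\,\id_{\oD\times\D}$. Since $\omega$ restricts to the identity on $\La$ (recorded just after the definition of $\omega$ in Section~\ref{sec.ass}), one has $\Omega(a,b)=\delta_{a,b}$ for $a,b\in\oL$, so — using $\oL\subseteq\oD\subseteq\D$ — the $\oL\times\oL$ submatrix of $\Omega\,\id_{\oD\times\D}$ is the identity, whence $\rk(\Omega\,\id_{\oD\times\D})\ge|\oL|$. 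Right multiplication by $P$ cannot increase rank, so $\rk(\bH)\ge\rk(\bH P)\ge|\oL|$, and combining with the upper bound gives $\rk(\bH)=|\oL|$. A cleaner‑looking variant for the lower bound: since $\rk(H)=|\oD|$, $H$ has a right inverse $H^{+}$ over $\BQ$, so $\Omega=\bH H^{+}$ and $\rk(\bH)\ge\rk(\Omega)=|\oL|$.

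I do not expect any real obstacle: once Lemmas~\ref{r.51a} and~\ref{r.100a} are available, the statement is formal. The only point requiring a moment's care is bookkeeping of the ambient index sets — in particular confirming $\oL\subseteq\oD$ so that extracting the $\oL\times\oL$ block of $\Omega\,\id_{\oD\times\D}$ is meaningful, and recalling that $\omega$ is the identity on the inner edges of $\La$.
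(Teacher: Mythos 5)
Your proof is correct and essentially identical to the paper's. Part (a) is the same one‑line computation $\bH P\bH^\dag=\Omega(HPH^\dag)\Omega^\dag=-4\Omega\oQ\Omega^\dag=-4\bQ$; for part (b), the paper argues by observing that the left kernels of $\Omega$ and of $H$ are both zero (since $\rk(\Omega)=|\oL|$ and $\rk(H)=|\oD|$), hence the left kernel of $\bH=\Omega H$ is zero — which is the same idea as your ``cleaner variant'' with the right inverse $H^{+}$, and your first variant via $HP=4\,\id_{\oD\times\D}$ is only a cosmetically different packaging of the same two rank facts.
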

\begin{proof}
(a)
Using \eqref{eq.55b}, \eqref{eq.55aa}, and then Lemma \ref{r.51a}, we have
 \begin{align*}
 \bH P \bH^\dag & = \Omega ( H P H^\dag) \Omega^\dag = -4 \Omega \oQ \Omega^\dag
 = - 4 \bQ.
 \end{align*}

 (b) Since $\rk(\Omega)= |\oL|$, the number of rows of $\Omega$, the left kernel of $\Omega$ is 0. Similarly,
 since $\rk(H)= |\oD|$ (by Lemma  \ref{r.51a}) the left kernel of $H$ is 0.  Hence the left kernel of
 $\bH= \Omega H$ is 0, which implies $\rk\bH= |\oL|$.
\end{proof}

\def\tiS{\tilde \cS}


\def\bvk{\bar {\vk}}
\def\bvkD{\bvk_\D}
\def\ZebD{\cY^\ev(\bD)}
\def\hZeD{\tilde \cY^\ev(\bD)}
\def\hZeDp{\tilde \cY^\ev(\bD')}
\def\hbXD{\tilde {\bar {\sX}} (\D)}
\def\hbXDp{\tilde {\bar {\sX}} (\D')}

 Recall that $\XhalfD = \BT(P,q^{1/4},x)$. From Lemma \ref{r.100} and Proposition \ref{r.homo}, we have the following.

  \begin{corollary}
  There exists a unique injective algebra
 homomorphism $\bPsi: \cY(\bD) \to \XhalfD$, such that for all $\bk\in \BZ^\oL$,
 \be
 \lbl{eq.bpsi}
   \bPsi(y^\bk) = x^{\bk\bH}
  \ee

  \end{corollary}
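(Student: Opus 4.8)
The plan is to obtain $\bPsi$ as a direct instance of the multiplicatively linear homomorphism machinery of Section~\ref{sec.homo}. Concretely, I would invoke Proposition~\ref{r.homo} with the finite index sets $I=\oL$ and $J=\D$, with source antisymmetric matrix $A=\bQ$ and target antisymmetric matrix $B=P$, and with transition matrix $H=\bH\in\Mat(\oL\times\D,\BZ)$. Since $\cY(\bD)=\bT(\bQ,q^{-1},y)$ and $\XhalfD=\bT(P,q^{1/4},x)$, I set $u=q^{1/4}$ and $r=-4$, so that $u^{r}=q^{-1}$; both $u$ and $u^{r}$ are integral powers of $q^{1/4}$, as the proposition requires. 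Up to swapping the names of the basis variables ($x\leftrightarrow y$), the $\cR$-linear map sending $y^{\bk}\mapsto x^{\bk\bH}$ is precisely the map $\psi_{\bH}$ of that proposition, so it is automatically well defined and $\cR$-linear.

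Then I would verify the two hypotheses. Proposition~\ref{r.homo}(a) says $\psi_{\bH}$ is an $\cR$-algebra homomorphism if and only if $\bH P\bH^{\dag}=r\bQ=-4\bQ$, which is exactly Lemma~\ref{r.100}(a). Proposition~\ref{r.homo}(c) yields injectivity once $\rk(\bH)=|\oL|=|I|$, which is exactly Lemma~\ref{r.100}(b). Uniqueness is then immediate: an $\cR$-algebra homomorphism out of $\cY(\bD)$ is determined by its values on the generators $y_a$, $a\in\oL$ (equivalently, on the $\cR$-basis $\{y^{\bk}\mid\bk\in\BZ^{\oL}\}$), and \eqref{eq.bpsi} prescribes these values.

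I do not expect any genuine obstacle at this point: the substance has all been front-loaded into Lemma~\ref{r.100}, whose proof in turn draws on Lemma~\ref{r.51a} (the vertex/face duality $HPH^{\dag}=-4\oQ$ and $\rk(H)=|\oD|$) together with the factorizations $\bH=\Omega H$ and $\bQ=\Omega\oQ\Omega^{\dag}$ of Lemma~\ref{r.100a}. The corollary itself is a one-line specialization. If it turns out to be convenient downstream, I would also record here --- via Proposition~\ref{r.homo}(b) --- that $\bPsi$ is reflection invariant, i.e. $\bPsi\,\chi=\chi\,\bPsi$.
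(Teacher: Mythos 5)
Your proposal is correct and is precisely the argument the paper intends: the paper states the corollary immediately after the sentence ``From Lemma~\ref{r.100} and Proposition~\ref{r.homo}, we have the following,'' and your proof simply unpacks that one-liner by matching $I=\oL$, $J=\D$, $A=\bQ$, $B=P$, $H=\bH$, $u=q^{1/4}$, $r=-4$, and invoking Lemma~\ref{r.100}(a)--(b) to verify the hypotheses of Proposition~\ref{r.homo}(a) and (c).
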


 \subsection{Existence of the quantum trace map}
 The main result of \cite{BW0} is the construction of the quantum trace map
 $ \tr_q^\La: \ooS \to \ZeL.$
 We will show that $\tr_q^\La$ is the natural map $\bvpD: \ooS \to \bXD$, via the shear-to-skein map $\bar \psi$.

 \begin{proposition}\lbl{r.homo3}
  The map $\bvpD: \ooS \to \bXD$ is an 
 algebra homomorphism.

 \end{proposition}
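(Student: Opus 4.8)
The plan is to factor $\bvpD=\pi\circ\varphi_\D$ through a subalgebra of $\XD$ on which the canonical projection $\pi$ is already multiplicative, and then to check that $\varphi_\D$ lands inside that subalgebra. Concretely, I would let $V\subseteq\XD$ be the $\cR$-span of those normalized monomials $X^\bk$, $\bk\in\BZ^\D$, with $\bk(c_p)\ge 0$ for every $p\in\ocP$. Since $X^\bk X^\bn$ is, up to an invertible scalar, equal to $X^{\bk+\bn}$ (Proposition~\ref{r.11s}), $V$ is an $\cR$-subalgebra of $\XD$, and it contains both $\XppD$ and the target $\bXD$. The point of $V$ is that $\pi$ restricted to $V$ \emph{is} an $\cR$-algebra homomorphism $V\to\bXD$ --- this is exactly the observation recorded at the end of Section~\ref{sec.based}, applied with $I=\D$ and $I'=\D\setminus\{c_p\mid p\in\ocP\}$; informally, $\pi|_V$ is ``evaluation at all $X_{c_p}=0$''. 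Granting this, $\bvpD=(\pi|_V)\circ\varphi_\D$ will be a composite of algebra homomorphisms as soon as we know $\varphi_\D(\ooS)\subseteq V$.

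To establish $\varphi_\D(\ooS)\subseteq V$ I would reduce to a single knot: $\ooS=\cS(\sS,\emptyset)$ is generated as an $\cR$-algebra by its knots, $V$ is a subalgebra, and $\varphi_\D$ is a homomorphism, so it suffices to check $\varphi_\D(\al)\in V$ for one knot $\al\subseteq\sS$. A trivial knot maps to a scalar, so I may assume $\al$ non-trivial and, after isotopy, $\D$-normal. The key geometric input is that $\al$, living in $\sS$, is disjoint from every boundary loop $c_p=\partial D_p$, i.e.\ $\mu(\al,c_p)=0$ and hence $c_p\notin\cE(\al,\D)$ for all $p$. Now I would run the ``multiply into $\XppD$, then divide back'' trick: set $\bk_E\in\BN^\D$ with $\bk_E(e)=\mu(\al,e)$, so $E:=\D^{\bk_E}=X^{\bk_E}$ is a product of copies of edges actually met by $\al$, with $\bk_E(c_p)=0$. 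By the skein-resolution computation in the proof of Theorem~\ref{r.fc} (in the $\D$-normal form behind \eqref{eq.m1}), stacking $\al$ above $E$ and resolving every crossing writes $\al\cdot E$ as an $\cR$-combination of $\cP$-links, each a disjoint union of edge-copies, so $\al\cdot E\in\XppD$. Then, in $\XD$, one has $\varphi_\D(\al)=\varphi_\D(\al E)\,X^{-\bk_E}$; writing $\varphi_\D(\al E)=\sum_\bk c_\bk X^\bk$ with all $\bk\in\BN^\D$, every monomial of $\varphi_\D(\al)$ is a scalar multiple of $X^{\bk-\bk_E}$ with $(\bk-\bk_E)(c_p)=\bk(c_p)\ge 0$, so $\varphi_\D(\al)\in V$, as wanted.

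As a consistency check (not logically needed) one notes that each $c_p$ is central in $\cS$, hence $X_{c_p}=\varphi_\D(c_p)$ is central in $\XD$, so $\XD\cong\bXD[X_{c_p}^{\pm1}\colon p\in\ocP]$ and both ``$\pi|_V$ is a homomorphism'' and the division by $X^{\bk_E}$ above become transparent. One can also see the $\D$-simple case directly from \eqref{eq.fc}: since $c_p\notin\cE(\al,\D)$, the $c_p$-entry of $CH$ only picks up contributions from the two edges of the fake triangle $\tau_p$ other than $c_p$, and admissibility of $C$ at $\tau_p$ forces that entry to be $\ge 0$ (in fact in $2\BN$).

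I expect the main obstacle to be the second paragraph --- precisely, certifying that applying $\varphi_\D$ to a knot in $\sS$ never produces a negative power of any $X_{c_p}$. The clean route is the trick above, whose only nontrivial ingredient is $\al\cdot E\in\XppD$ for a $c_p$-free monomial $E$; this is part of the resolution analysis behind Theorem~\ref{r.fc} (equivalently \cite[Proposition~29]{BW2}), which I would invoke, or, if a self-contained treatment is wanted, redo in the $\D$-normal setting exactly as in that proof. The remaining ingredients --- that $V$ is a subalgebra, that $\pi|_V$ is a homomorphism, and that knots generate $\ooS$ --- are all already available in the paper.
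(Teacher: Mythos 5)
Your approach matches the paper's in all essentials: both factor $\bvpD$ through the subalgebra $\sX_+(\D)$ (your $V$) of normalized monomials $X^\bk$ with $\bk(c_p)\ge 0$, observe that the canonical projection restricted there is an algebra map, and then verify that the image of the skein map lands inside $\sX_+(\D)$ by the ``multiply by $X^{\bk_\al}$, land in $\XppD$, then divide'' trick. The paper (Lemma \ref{r.embed}) proves the slightly stronger inclusion $\cS\subset\sX_+(\D)$ (handling $\cP$-arcs as well, so as to restrict $\pi$ on all of $\cS$), and it justifies the key step $X^{\bk_\al}\al\in\XppD$ by directly citing \cite[Corollary 6.9]{Muller}, which holds for arbitrary essential $\cP$-links; the paper also phrases multiplicativity of $\pi$ via the ideal decomposition $\sX_+(\D)=\bXD\oplus\cI$ with $\cI$ the ideal generated by the central $c_p$, rather than via the Section~\ref{sec.based} remark, but these are the same observation. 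One small imprecision on your side: after resolving the crossings of $\al$ with $E$ for a general $\D$-normal knot, the resulting $\cP$-links are \emph{not} necessarily disjoint unions of edge-copies (that only happens in the $\D$-simple case of Theorem~\ref{r.fc}); the correct statement, that the product nonetheless lands in $\XppD$, is exactly Muller's Corollary 6.9, which is the clean citation to use here rather than the $\D$-simple resolution analysis.
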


 \begin{proposition}\lbl{r.homo4}
 One has $\bar \psi(\cY^\ev(\bD)) \subset \bXD$.
\end{proposition}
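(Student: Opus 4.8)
The plan is to mimic the proof of Proposition~\ref{r.Ybl}. Since $\bar\psi$ is $\cR$-linear, $\cY^\ev(\bD)$ is $\cR$-spanned by the normalized monomials $y^\bk$ with $\bk\in\BZ^\oL$ being $\La$-balanced, and $\bar\psi(y^\bk)=x^{\bk\bH}$ by \eqref{eq.bpsi}, it suffices to show $x^{\bk\bH}\in\bXD$ for every such $\bk$. Recall that, under the inclusion $\sX(\D)\subset\XhalfD$ given by $X_a=x_a^2$, a short Weyl-normalization check gives $X^\bm=x^{2\bm}$, so the preferred $\cR$-basis of $\bXD$ becomes the set of normalized monomials $x^\bn$ with $\bn\in\BZ^\D$ having all entries even and with $\bn(c_p)=0$ for every $p\in\ocP$. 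Hence I must check two conditions: (A) every entry of $\bk\bH$ is even, and (B) $(\bk\bH)(c_p)=0$ for all $p\in\ocP$.

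The key bookkeeping step is the identity $\bk\bH=\hat\bk\,Q_\D$, where $\hat\bk\in\BZ^\D$ is the ``pullback'' of $\bk$ defined by $\hat\bk(e)=\bk(\omega(e))$ for $e\in\D\setminus\{c_p:p\in\ocP\}$ (with the convention $\bk(f)=0$ for boundary edges $f$ of $\La$) and $\hat\bk(c_p)=0$. Indeed $\bH=\Omega H$, the restriction of $\hat\bk$ to $\oD$ equals $\bk\Omega$, and $\hat\bk$ vanishes on every boundary edge of $\D$ (a boundary arc of $\Sigma$ other than a $c_p$ is $\cP$-isotopic in $\bsS$ to a boundary arc of $\bsS$, so $\omega$ carries it to a boundary edge of $\La$); since $H$ is the $\oD\times\D$ submatrix of $Q_\D$, this gives $\bk\bH=(\bk\Omega)H=\hat\bk\,Q_\D$.

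For (A), I would invoke Lemma~\ref{r.even1}, which applies because $\pS=\partial\Sigma\neq\emptyset$ (it contains every $c_p$): $\hat\bk\,Q_\D$ has even entries if and only if $\hat\bk$ is $\D$-balanced. To see the latter, run over the triangles of $\D$. If $\tau$ is not a fake triangle, its three edges are carried by $\omega$ bijectively onto the three edges of the triangle $\sigma\in\cF(\La)$ containing $\tau$, so $\hat\bk(\tau)=\bk(\sigma)$, which is even by $\La$-balance of $\bk$. If $\tau=\tau_p$ is a fake triangle, its edges are $c_p$ and two edges $a_1,a_2$ with $\omega(a_1)=\omega(a_2)$, whence $\hat\bk(\tau_p)=0+2\bk(\omega(a_1))$, again even. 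For (B), note $c_p$ is a boundary edge of $\D$, so it lies in exactly one triangle, namely the fake triangle $\tau_p$; therefore $(\hat\bk\,Q_\D)(c_p)=(\hat\bk\,Q_{\tau_p})(c_p)$, and by Lemma~\ref{r.Qtau0} this equals $\pm\bigl(\hat\bk(a_1)-\hat\bk(a_2)\bigr)=\pm\bigl(\bk(\omega(a_1))-\bk(\omega(a_2))\bigr)=0$. Together (A) and (B) show $x^{\bk\bH}=x^{\hat\bk Q_\D}\in\bXD$, which proves the proposition.

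I expect the only delicate point to be the combinatorial input about the lift $\D$ of $\La$ that underlies the identity $\bk\bH=\hat\bk\,Q_\D$ and the $\D$-balance of $\hat\bk$: namely that the non-fake triangles of $\D$ are in edge-for-edge correspondence (via $\omega$) with the triangles of $\La$, that each $c_p$ is a side of exactly one triangle $\tau_p$ of $\D$ whose other two sides are identified by $\omega$, and that all remaining boundary edges of $\D$ project to boundary edges of $\La$. All of these are immediate from the construction of $\D$ in Section~\ref{sec.ass}; once they are in hand, the rest is a direct application of Lemmas~\ref{r.even1} and~\ref{r.Qtau0} together with the definitions of $\bXD$ and $\bH$.
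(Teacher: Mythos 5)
Your proof is correct, and it takes a genuinely different route from the paper's. The paper reduces to a generating set: by Lemma~\ref{r.gen.Ye}, $\cY^\ev(\La)$ is generated by $\cY^{(2)}(\La)$ together with the monomials $y^{\bk_\al}$ for $\La$-simple knots $\al$, and then Lemmas~\ref{r.85} and~\ref{r.in2} handle the two types of generators separately (Lemma~\ref{r.85} via the observation that the rows of $\Omega$ are $\omega$-equivariant together with Lemma~\ref{r.m1}(a); Lemma~\ref{r.in2} via the knot-specific Proposition~\ref{r.m1}(b)). You instead work directly with an arbitrary $\La$-balanced $\bk$, packaging the computation into the clean identity $\bk\bH=\hat\bk\,Q_\D$ and then checking the parity condition via Lemma~\ref{r.even1} and the $c_p$-vanishing via Lemma~\ref{r.Qtau0}. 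Both routes rest on the same underlying cancellations (parity from $\D$-balance, vanishing on $c_p$ from the matching of $\hat\bk$ on the two non-$c_p$ sides of each fake triangle), but your approach bypasses the generating-set reduction and the knot-state bookkeeping entirely, and is naturally seen as the direct extension of the paper's proof of Proposition~\ref{r.Ybl} to the presence of the boundary loops $c_p$. The combinatorial inputs you flag as the delicate points --- the edge-for-edge $\omega$-correspondence between non-fake triangles of $\D$ and triangles of $\La$, the $\omega$-identification of the two non-$c_p$ sides of a fake triangle, and boundary edges projecting to boundary edges --- are exactly what the paper itself establishes (Claim~1 in the proof of Lemma~\ref{r.m2}, and the setup around Figure~\ref{fig:fake3}), so the argument is sound. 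The one point worth stating explicitly, which you gloss over slightly, is that the identity $\bk\bH=\hat\bk Q_\D$ requires $\hat\bk$ to vanish on all of $\D\setminus\oD$, not just on the $c_p$; this holds because a boundary edge of $\D$ other than a $c_p$ lies on $\partial\bsS$, so $\omega$ sends it to a boundary edge of $\La$ and your convention $\bk(f)=0$ kicks in --- you do mention this, so no gap remains.
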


 \begin{theorem} Let $(\bsS,\cP)$ be a triangulated generalized marked surface,  with  triangulation $\La$, and with associate marked surface $\SM$. Assume that $\D$,  a triangulation of $\SM$, is a lift of $\La$.
 \lbl{r.main1}

 (a) In the diagram
\be
\lbl{eq.2head4}
\ooS \overset {\bvpD } \longrightarrow \bXD \overset \bPsi \rembed \ZeL
\ee
the image of $\bvpD $ is in the image of $\bPsi$, i.e. $\bvpD (\ooS) \subset \bPsi(\ZeL)$.

(b) Let $\bvkD: \ooS \to \ZebD$ be defined by $\bvkD= (\bPsi)^{-1} \circ \bvpD$. Then $\bvkD$ is equal to
the Bonahon-Wong quantum trace map $\tr_q^{\bD}$.

\end{theorem}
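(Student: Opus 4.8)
The plan is to mirror the proof of Theorem~\ref{r.52}. First I would record that all three maps in play are algebra homomorphisms: $\bvpD$ by Proposition~\ref{r.homo3}; the Bonahon--Wong map $\tr_q^\La\colon\ooS\to\cY^\ev(\La)$ by \cite{BW0}; and $\bPsi$, which is an injective algebra homomorphism carrying $\cY^\ev(\La)$ into $\bXD$ by Proposition~\ref{r.homo4}, so that $\bPsi(\cY^\ev(\La))\subset\bXD$ is a subalgebra. Hence it suffices to prove the single identity
\[
\bvpD(\al)=\bPsi\bigl(\tr_q^\La(\al)\bigr)
\]
for $\al$ ranging over a generating set of $\ooS$: this immediately gives $\bvpD(\ooS)=\bPsi(\tr_q^\La(\ooS))\subset\bPsi(\cY^\ev(\La))$, which is part (a), and $\bvkD=\bPsi^{-1}\circ\bvpD=\tr_q^\La$, which is part (b). For the generating set I would use the Lemma of Section~\ref{sec.state} (applied to $(\bsS,\cP)$ with triangulation $\La$), which lets me restrict to $\cP$-knots $\al$ crossing some edge of $\La$ once; since a $\cP$-knot meets a boundary edge zero times, this edge is automatically an inner edge $a\in\oL$.

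Next, for such an $\al$ I would compute both sides. On one hand, Proposition~\ref{r.fc2} gives $\tr_q^\La(\al)=\sum_{s\in\operatorname{St}(\al,\La)}q^{\uu_\La(s)}\,y^{\bk_s}$, so by \eqref{eq.bpsi} and \eqref{eq.55b}
\[
\bPsi\bigl(\tr_q^\La(\al)\bigr)=\sum_{s\in\operatorname{St}(\al,\La)}q^{\uu_\La(s)}\,x^{\bk_s\Omega H}.
\]
On the other hand I would isotope $\al$ into $\D$-normal position in the associated marked surface $\SM$, disjoint from the disks $D_p$ (possible since $\al\subset\sS$ misses the punctures and $\mu(\al,c_p)=0$); as $a\in\La\subset\D$ is an inner edge of $\Sigma$, identity \eqref{eq.m1} applies and yields $\vpD(\al)=\sum_{t\in\operatorname{St}(\al,\D)}q^{\uu_\D(t)}\,x^{\bk_t H}$ with $\bk_t\in\BZ^\oD$ and $H$ the $\oD\times\D$ submatrix of $Q_\D$. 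Applying the canonical projection $\pi$ — which fixes a normalized monomial whose $c_p$-exponents all vanish and annihilates any other — I get $\bvpD(\al)=\sum_{t}q^{\uu_\D(t)}\,x^{\bk_t H}$, the sum now restricted to those $t\in\operatorname{St}(\al,\D)$ with $(\bk_t H)(c_p)=0$ for every $p\in\ocP$.

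The heart of the argument is then the identification of these two sums, and this is where the fake triangles $\tau_p$ enter. Since $c_p$ is an edge only of $\tau_p$, Lemma~\ref{r.Qtau0} shows that $(\bk_t H)(c_p)$ is the difference of the $\bk_t$-values on the two non-$c_p$ edges of $\tau_p$, one of which is a new diagonal $d_p$ with $\omega(d_p)$ equal to the other; so the surviving states are exactly those whose intersection data along each $d_p$ is synchronized, through $\tau_p$, with that along $\omega(d_p)$. I would argue that under the contraction $\omega$ such states correspond naturally to the states $s\in\operatorname{St}(\al,\La)$ — the point being that a $\D$-normal knot disjoint from the $D_p$ avoids every $c_p$, so its intersections with $E_\D$ are carried by $\omega$ to intersections with $E_\La$ — that $\bk_t H=\bk_s\Omega H=\bk_s\bH$ for corresponding pairs by the definition~\eqref{eq.55b} of $\bH=\Omega H$, and that the combinatorial powers agree, $\uu_\D(t)=\uu_\La(s)$, checking this from the formulas of Section~\ref{sec.uu} (the fake-triangle and split-edge contributions to $\uu_\D$ recombining into the $\uu_\La$ contributions, compatibly with the edge-splitting embedding $\iota$ of~\eqref{eq.22d}). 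A term-by-term comparison then gives $\bvpD(\al)=\bPsi(\tr_q^\La(\al))$, as required.

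The hard part will be exactly this last matching: one must unwind carefully how a $\D$-normal knot passes by the fake triangles and their doubled edges $d_p$, both to pin down the correspondence of surviving states with $\operatorname{St}(\al,\La)$ and to verify $\uu_\D(t)=\uu_\La(s)$ for corresponding states. Everything else is formal — it is already packaged in Theorem~\ref{r.52}, Proposition~\ref{r.fc2}, the projection description of $\pi$, and the matrix identities of Lemmas~\ref{r.100a} and~\ref{r.100}.
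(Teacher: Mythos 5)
Your plan is correct and matches the paper's proof almost step for step: reduce to $\cP$-knots crossing a single inner edge of $\La$, express $\vpD(\al)$ via \eqref{eq.m1} and $\tr_q^\La(\al)$ via Proposition~\ref{r.fc2}, apply $\pi$, and match states term by term. The "hard part" you correctly isolate — the correspondence between surviving $\D$-states and $\La$-states, the identity $\bk_t H=\bk_s\bH$, and the equality $\uu_\D(t)=\uu_\La(s)$ — is exactly what the paper packages into Proposition~\ref{r.68}, which rests on Lemma~\ref{r.m1} (the surviving states are the $\omega$-equivariant ones, and for those $\pi(x^{\bk_s H})=x^{\bk_{\bar s}\bH}$) and Lemma~\ref{r.m2} ($\uu(s)=\uu(\bar s)$ via a triangle-by-triangle comparison, with the fake-triangle contributions vanishing by antisymmetry). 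Since those results are already proved in the paper you could simply invoke Proposition~\ref{r.68} in place of rederiving it, but your outline of what must be checked is the same content.
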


\def\cJ{\mathcal J}

Part (b) of the theorem implies that  $\bvkD$ depends only on $\bD$. That is, if $\D,\D'$ are two
triangulations of $\SM$ which are lifts of $\La$, then $\bvkD =
\bvk_{\D'}= \tr_q^{\La}$.

The remaining part of this section is devoted to  proofs of Propositions \ref{r.homo3}, \ref{r.homo4},
and Theorem~\ref{r.main1}. Note that Theorem \ref{thm.3} is a special case of Theorem \ref{r.main1}, when $\partial \bsS=\emptyset$.

\def\sC{\mathscr C}
\def\bCol{\overline{\Col}}
\def\bC{\overline{\sC}}
\def\Csao{\sC^*}

\subsection{Proof of Proposition \ref{r.homo3}}

Let $\sX_+(\D) \subset \sX(\D)$ be the $\cR$-submodule spanned by $X^\bk$ such that $\bk(c_p) \ge 0$ for all
$p\in \ocP$.   It is clear that $\sX_+(\D)$ is an $\cR$-subalgebra of $\sX(\D)$, and $\bXD \subset
\sX_+(\D)$.

\begin{lemma}
\lbl{r.embed}
 One has $\cS \subset \sX_+(\D)$.
\end{lemma}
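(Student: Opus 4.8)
\textbf{Proof plan for Lemma \ref{r.embed}.}
The claim is that under the embedding $\vp_\D:\cS\embed\sX(\D)$, every element of $\cS$ lies in the subalgebra $\sX_+(\D)$, i.e. has $X^\bk$-support confined to $\bk(c_p)\ge 0$ for all $p\in\ocP$. Since $\sX_+(\D)$ is a subalgebra and $\cS$ is generated by $\cP$-knots (indeed by triangulation-simple knots, or simply by essential $\cP$-links via Proposition \ref{r.basis}), it suffices to show $\vp_\D(\al)\in\sX_+(\D)$ for each $\cP$-knot $\al\subset\Sigma$. The plan is to reduce to the geometric picture near each fake triangle $\tau_p$ and observe that a knot essentially never needs a negative power of the boundary loop edge $c_p$.

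First I would recall from Theorem \ref{r.Muller} and the surrounding discussion that $\cS$ is sandwiched, $\XppD\subset\cS\subset\XD$, and that for $\bk\in\BN^\D$ the element $\D^\bk=\thD(X^\bk)$ is the $\cP$-link consisting of $\bk(a)$ parallel copies of each edge $a$. In particular $c_p=X_{c_p}$ itself lies in $\XppD\subset\sX_+(\D)$. Now for an arbitrary $\cP$-knot $\al$, isotope $\al$ to be $\D$-normal. The key geometric point is that $c_p=\partial D_p$ bounds the small disk $D_p$, and since $\al$ is essential and $\D$-normal, $\al$ can be isotoped so that $|\al\cap c_p|=\mu(\al,c_p)$; moreover $c_p$ is the unique edge of the fake triangle $\tau_p$ that separates $D_p$ from the rest of $\Sigma$, and $\mu(\al,c_p)=0$ for any knot $\al$ disjoint from $\mathring D_p$ (which we may always arrange). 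So after isotopy $\al$ misses $c_p$ entirely for every $p\in\ocP$.

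Next, using Theorem \ref{r.fc} (for $\D$-simple knots) or more generally Proposition \ref{r.fc2}/the Corollary computing $\pD(\al)=\sum_{s}q^{\uu(s)}x^{\bk_s H}$, I would read off that the exponent on $X_{c_p}$ in every monomial of $\vp_\D(\al)$ equals an entry of $\bk_s H$ (equivalently $\hat\bk_s Q$) at the index $c_p$. By Lemma \ref{r.Qtau0} applied to the fake triangle $\tau_p$ with edges $a,d,c_p$ (notation as in Figure \ref{fig:triangulation}), this entry is $(\hat\bk_s Q_{\tau_p})(c_p)=\hat\bk_s(d)-\hat\bk_s(a)$; but since $\al$ is disjoint from $\mathring D_p$ and $a,d$ are $\cP$-isotopic in $\bsS$ (both map to $\omega^{-1}(\cdot)$ on the same edge), the two edges $a,d$ are met by $\al$ in exactly the same points, so $\hat\bk_s(a)=\hat\bk_s(d)$ and the exponent on $X_{c_p}$ is $0$. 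Hence $\vp_\D(\al)\in\bXD\subset\sX_+(\D)$, and since $\sX_+(\D)$ is an $\cR$-subalgebra and such knots (together with, at worst, the central loops $c_p$ themselves) generate $\cS$, we conclude $\cS\subset\sX_+(\D)$.

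The main obstacle I anticipate is handling knots (or links) that genuinely wind around $D_p$ and cannot be pushed off $\mathring D_p$ — but in fact such a component would be parallel to $c_p$ and its contribution only involves $X_{c_p}$ to a \emph{positive} power, so it still lands in $\sX_+(\D)$; one must just be careful that the isotopy used to make $\al$ $\D$-normal does not force a crossing through $D_p$ with the wrong sign, which is controlled by the admissibility condition in the definition of $\Col(\al,\D)$ (the forbidden pair of Figure \ref{fig:forbidden2}) ruling out exactly the smoothing that would produce a negative exponent. So the real content is the bookkeeping of the $c_p$-exponent via Lemma \ref{r.Qtau0} at fake triangles, which is routine once set up.
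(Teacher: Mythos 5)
Your approach is genuinely different from the paper's: you try to control the $c_p$-exponent of $\vp_\D(\al)$ directly via the explicit state-sum formulas (Theorem \ref{r.fc} / Proposition \ref{r.fc2}) together with the fake-triangle bookkeeping of Lemma \ref{r.m1}. The paper instead makes a one-line reduction to Muller's positivity result \cite[Corollary 6.9]{Muller}: for any $\cP$-knot \emph{or} $\cP$-arc $\al$, the element $X^{\bk_\al}\al$ with $\bk_\al(a)=\mu(\al,a)$ lies in $\sX_{++}(\D)$; since $\bk_\al$ vanishes on every boundary edge, in particular on every $c_p$, the monomial $X^{-\bk_\al}$ has $c_p$-exponent $0$, so $\al\in X^{-\bk_\al}\sX_{++}(\D)\subset\sX_+(\D)$, and then one uses that $\sX_+(\D)$ is a subalgebra. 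This avoids any triangulation-specific computation.

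There are two genuine gaps in your argument. First, $\cS=\cS\SM$ is generated (as an algebra, and spanned as a module) by $\cP$-knots \emph{and} $\cP$-arcs, not by $\cP$-knots alone; your reduction ``it suffices to show $\vp_\D(\al)\in\sX_+(\D)$ for each $\cP$-knot $\al$'' silently drops the arcs, and Theorem \ref{r.fc}/Proposition \ref{r.fc2} are stated only for knots (the arc version \eqref{eq.arctoskein} is asserted without proof and only for $\D$-simple arcs). Second, the step ``$a,d$ are met by $\al$ in exactly the same points, so $\hat\bk_s(a)=\hat\bk_s(d)$'' conflates the unsigned count $|\al\cap a|$ with the state-weighted sum $\bk_s(a)=\sum_{v\in\al\cap a}s(v)$: these agree only for $\omega$-equivariant states $s$, and for other admissible states $\bk_s(e'_p)-\bk_s(e''_p)$ is strictly positive (this is exactly the content of the proof of Lemma \ref{r.m1}(c)). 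You do notice at the end that admissibility rules out a negative exponent — the correct observation — but then your displayed conclusion $\vp_\D(\al)\in\bXD$ overclaims; only $\vp_\D(\al)\in\sX_+(\D)$ holds, which is what is needed. With these repaired the route could work for knots, but extending it to arcs would cost you substantially more than the paper's citation of \cite[Corollary 6.9]{Muller} does.
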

\begin{proof}
 Suppose $\al\subset \Sigma$ is either a  $\cP$-knot or a $\cP$-arc.
 Define $\bk_\al\in \BZ^\D$ by $\bk_\al(a) =\mu(\al,a)$. Clearly if $a$ is a
 boundary edge, then $\bk_\al(a)=0$. Hence, $X^{-\bk_\al} \in \sX_+(\D)$.
 By \cite[Corollary 6.9]{Muller}, $X^{\bk_\al} \al  \in \sX_{++}(\D)\subset  \sX_{+}(\D)$.
 It follows that $\al \in X^{-\bk_\al}\sX_+(\D) \subset \sX_+(\D)$. Since $\cS$ is generated as an algebra
 by the set of all $\cP$-knots and $\cP$-arcs, we have $\cS \subset \sX_+(\D)$.
\end{proof}
\begin{proof}
[Proof of Proposition \ref{r.homo3}]
We will prove the stronger statement which says that  the restriction $\pi|_{\cS}: \cS \to \bXD$
 is an $\cR$-algebra homomorphism.
Let $\cI$ be the two-sided ideal of $\sX_+(\D)$  generated by central elements $c_p, p \in \ocP$. Then
$\sX_+(\D) = \bXD \oplus \cI$, and the canonical projection
$
 \pi_+: \sX_+(\D) \to  \bX(\D)
$
is  the quotient map $\sX_+(\D)\to \sX_+(\D)/\cI = \bXD$.
It follows that  $\pi_+$  is an $\cR$-algebra homomorphism.
 Since $\pi|_{\cS}: \cS \to \bXD$ is the restriction of $\pi_+:\sX_+(\D)\to \bXD$
onto $\cS$, it is an $\cR$-algebra homomorphism.
\end{proof}

\def\bXHD{\bX^{(\frac 12)}(\D)}
\def\osC{\mathring{\sC}}
\def\bG{{ \G}}
\def\hbS{\hat {\sS}}
\def\hbD{{\hat {\bD}}}
\def\hbQ{{\hat {\bQ}}}
\def\bal{\bar \al}
\def\bs{{\bar s}}
\def\St{\operatorname{St}}
\def\hg{\hat g}
\def\hoD{\widehat{\oD}}
\def\haD{\hat D}
\def\hoQ{{\widehat{\oQ}}}

\subsection{$\D$-normal knots
 }
 \lbl{sec.simple}
 Let  $\al\subset \Sigma \setminus \pS$ be a   $\D$-normal  knot.
Recall that a state $s$ with respect to $\D$ is a map $s: \al\cap E_\D\to \{1,-1\}$, where $E_\D=\cup_{e\in \D}e$, see Section \ref{r.states}. By restricting $s$ to the subset $\al\cap E_\La$ we get a state $\bs$ of $\al$ with respect to $\La$. It may happen that $s$ is admissible but $\bs$ is not.
Let
 $\Col(\al,\D)$ be the set of all admissible states of $\al$ with respect to $\D$, and $\Col(\al,\bD)$ be the set of all admissible states of $\al$ with respect to $\bD$.
 For $s\in\Col(\al,\D)$ one has   $\bk_s \in \BZ^\oD$  defined by $\bk_s(a)= \sum_{u\in \al \cap a} s(u)$.
Similarly, for $r\in\Col(\al,\bD)$ one has    $\bk_r \in \BZ^\oL$  defined by $\bk_r(a)= \sum_{u\in \al \cap a} r(u)$.

\FIGc{fake3a}{Left: $b$ and $d$ co-bound a disk $D$ (left). Right: the intersection  $D\cap \al$.}{2cm}

Suppose $b\neq d\in \oD$ such that $\omega (b)=\omega(d)\in \oL$. 
 Then as $\cP$-arc in $\bsS$, $b$ and $d$ are $\cP$-isotopic, and hence co-bound a disk $D$ in $\bsS$, see Figure \ref{fig:fake3}. Since $\al$ is $\D$-normal, a connected component of $\al \cap D$ must have two end points with one in $b$ and one in $d$, see Figure \ref{fig:fake3a}.
 We say that a state $s\in\Col(\al,\D)$ is {\em $\omega$-equivariant on $b,d$} if $s(\gamma\cap b)=s(\gamma\cap d)$ for any connected component of $\al\cap D$. We say $s$ is {\em $\omega$-equivariant if} it is equivariant on any pair $b,d$ such that $\omega(b)=\omega(d)$.

Clearly if $s\in \Col(\al,\D)$ is $\omega$-equivariant, then $\bs$ is admissible.
Let $\Col^*(\al,\D)\subset \Col(\al,\D)$ be the subset of all $\omega$-equivariant states. The map
$s \to \bs$ is a bijection from $\Col^*(\al,\D)$ to $\Col(\al,\bD)$.

\begin{lemma} Suppose $s\in \Col^*(\al,\D)$. Then
\begin{align}
\lbl{eq.m6b}
\bk_s &= \bk_\bs \Omega
\no  {
\\
\lbl{eq.m6a}
 \la \bk_s, \bk_{s'} \ra_\oQ &= \la \bk_\bs, \bk_{\bs'} \ra_\bQ.
 }
\end{align}
\end{lemma}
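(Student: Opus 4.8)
The plan is to verify the identity $\bk_s=\bk_\bs\,\Omega$ one coordinate at a time, viewing both sides as row vectors in $\BZ^\oD$. For a fixed edge $b\in\oD$, the definition of $\Omega$ gives
$$(\bk_\bs\,\Omega)(b)=\sum_{a\in\oL}\bk_\bs(a)\,\Omega(a,b)=\bk_\bs(\omega(b)),$$
since $\Omega(a,b)=1$ precisely when $a=\omega(b)$. So the whole statement reduces to checking that $\bk_s(b)=\bk_\bs(\omega(b))$ for every $b\in\oD$.

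First I would dispose of the edges $b$ lying already in $\La$: there $\omega(b)=b$, the point set $\al\cap b$ is literally the same whether one regards $b$ as an edge of $\La$ or of $\D$, and by definition $\bs$ is the restriction of $s$ to $\al\cap E_\La\supseteq\al\cap b$, so $\bk_\bs(b)=\sum_{u\in\al\cap b}\bs(u)=\sum_{u\in\al\cap b}s(u)=\bk_s(b)$. The substantive case is $b\notin\La$, i.e. $b$ is one of the diagonals added inside a fake triangle, with $a:=\omega(b)\in\oL$ and $a\neq b$. Then $(a,b)$ is a pair of distinct edges of $\oD$ with $\omega(a)=\omega(b)$, so $a$ and $b$ cobound a disk $D\subset\bsS$ as in Figure \ref{fig:fake3a}; as recalled there, $\D$-normality of $\al$ forces every connected component of $\al\cap D$ to have exactly one endpoint on $a$ and one on $b$. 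Through each point of $\al\cap b$ (resp. of $\al\cap a$) passes exactly one such component, so these components set up a bijection $\al\cap b\;\overset{\sim}{\longrightarrow}\;\al\cap a$; and since $s$ is $\omega$-equivariant it takes equal values at matched points. Summing $s$ over $\al\cap b$ and over $\al\cap a$ therefore produces the same integer, that is $\bk_s(b)=\bk_\bs(a)=\bk_\bs(\omega(b))$, which is what we want.

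The one point that needs a little care — and the only place where $\D$-normality really enters — is the claim that the components of $\al\cap D$ match $\al\cap b$ with $\al\cap a$ \emph{bijectively}: that $\al$ has no closed component inside $D$ and no component of $\al\cap D$ with both endpoints on the same edge. This is exactly the content of the discussion preceding the definition of $\omega$-equivariance; the mild subtlety is that $D$ may enclose some of the disks $D_p$ together with their boundary loops $c_p$, so one argues with an innermost offending arc $\gamma$: if the sub-disk cut off by $\gamma$ avoids every $D_p$ this contradicts minimality of $|\al\cap b|$, while otherwise, after isotoping $\gamma$ across the enclosed disks $D_p$, it contradicts minimality of $|\al\cap c_p|$. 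Granting this, the displayed computation is purely formal and completes the proof.
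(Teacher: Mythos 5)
Your proof is correct and takes essentially the same approach as the paper: the paper's proof is a one-liner asserting $\bk_s(a)=\bk_\bs(\omega(a))$ ``from the definition,'' and what you have written is simply that line unwound into its constituent observations — reducing to the coordinate identity $\bk_s(b)=\bk_\bs(\omega(b))$, checking it trivially when $b\in\oL$, and invoking the bijection between $\al\cap b$ and $\al\cap\omega(b)$ produced by the components of $\al\cap D$ together with $\omega$-equivariance. The paper treats the bijectivity of that matching as already established in the text immediately preceding the lemma, so your filling in a justification is appropriate. One small comment on that last paragraph: the case you describe where the innermost arc $\gamma$ cuts off a disk containing some $D_p$ cannot actually arise, and the cleaner way to see this is local rather than global. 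Since $\al$ is $\D$-normal, for each individual triangle $\tau$ of $\D$ (fake or not) no component of $\al\cap\tau$ can have both endpoints on the same edge — the innermost such arc would cobound with a subarc of that edge a genuine bigon \emph{inside} $\tau$, hence disjoint from all vertices, all $c_p$'s, and all $D_p$'s, and removing it reduces the intersection number with that edge. Since moreover $\al$ is disjoint from each $c_p$, a component of $\al\cap D$ that enters a fake triangle through one of its non-$c_p$ edges must exit through the other, so following $\gamma$ through the stack of triangles between $b$ and $\omega(b)$ it marches monotonically from one side of $D$ to the other. This avoids the somewhat delicate ``isotope across $D_p$ and compare with $|\al\cap c_p|$'' reasoning, which as stated is hard to make precise because that isotopy leaves $\Sigma$.
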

\begin{proof} Let $a\in \oD$. From the  definition,
$
\bk_s (a) =\bk_{\bs}(\omega(a))
$, which is
which is equivalent to \eqref{eq.m6b}.
\end{proof}


Suppose $p\in \ocP$ and $\tau_p$ is the fake triangle having $c_p$ as an edge. Let the other two edges of $\tau_p$ be $e'_p, e''_p$ such that $c_p, e''_p, e'_p$ are counterclockwise, see Figure \ref{fig:fake3}. Then $\omega(e'_p)= \omega(e''_p)$.
\FIGc{fake3}{Fake triangle $\tau_p$ (left) and its intersection with $\al$}{2cm}

\begin{lemma} \lbl{r.m1}

(a) Suppose $\bk \in \BZ^{\oD}$ then $\bk H (c_p)= \bk(e'_p) - \bk(e''_p)$.

(b) Suppose $s \in \St^*(\al,\D)$. Then $ x^{\bk_s H} \in \bXD$.

(c) Suppose $s\in \St(\al,\D)$. Then
\be
\lbl{eq.m9}
\pi(x^{\bk_s H}) = \begin{cases}
x^{\bk_\bs \bH} \quad & \text{if } s \in \St^*(\al,\D)\\
0  & \text{if } s \in \not \in \St^*(\al,\D).
\end{cases}
\ee

\end{lemma}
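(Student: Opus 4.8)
The plan is to treat the three parts in order, reducing everything to the combinatorics of the single fake triangle $\tau_p$ together with Lemma \ref{r.Qtau0} and Lemma \ref{r.even1}. For part (a), I would first extend $\bk\in\BZ^\oD$ to $\hat\bk\in\BZ^\D$ by declaring $\hat\bk$ to vanish on all edges outside $\oD$; then $\bk H=\hat\bk Q_\D=\sum_{\tau\in\cF(\D)}\hat\bk Q_\tau$, as was used in the proof of Lemma \ref{r.even1}. Since $c_p\subset\pS$ is a boundary edge of $\Sigma$, it lies on exactly one triangle of $\D$, namely $\tau_p$, and $Q_\tau(e,e')=0$ unless $e,e'$ are both edges of $\tau$; hence $(\bk H)(c_p)=(\hat\bk Q_{\tau_p})(c_p)$. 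Reading the counterclockwise edge list of $\tau_p$ as $(e''_p,e'_p,c_p)$ (see Figure~\ref{fig:fake3}) and applying Lemma \ref{r.Qtau0} gives $(\hat\bk Q_{\tau_p})(c_p)=\hat\bk(e'_p)-\hat\bk(e''_p)$, which is the asserted formula, under the evident convention $\bk(e)=0$ for boundary $e$. This step is routine bookkeeping.

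For part (b) I would first note that $\bk_s$ is $\D$-balanced for every state $s$: for a triangle $\tau$ one has $\bk_s(\tau)\equiv\sum_{e\in\tau}|\al\cap e|\pmod 2$, and the right side counts endpoints of the arcs of $\al\cap\tau$, hence is even (for the fake triangle use that $\al\cap c_p=\emptyset$). Lemma \ref{r.even1}, applied to the associated marked surface $\Sigma$ (which has nonempty boundary), then shows $\bk_s H$ has even entries, so $x^{\bk_sH}\in\XD$. It remains to check the $c_p$-coordinates vanish when $s\in\St^*(\al,\D)$: the components $\gamma$ of $\al\cap\tau_p$ are arcs each meeting $e'_p$ and $e''_p$ exactly once (here I use that $\al$ avoids $D_p$ and is $\D$-normal), so they biject $\al\cap e'_p$ with $\al\cap e''_p$; $\omega$-equivariance on the pair $\{e'_p,e''_p\}$ makes $s$ agree on the two ends of each such $\gamma$, so $\bk_s(e'_p)=\bk_s(e''_p)$, and part (a) yields $(\bk_sH)(c_p)=0$ for all $p\in\ocP$. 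Hence $x^{\bk_sH}\in\bXD$.

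Part (c) is the substantive one and splits into a local and a global point. Locally: since $\bk_sH$ is even, $x^{\bk_sH}$ is a preferred basis monomial of $\XD$, so $\pi(x^{\bk_sH})$ is $x^{\bk_sH}$ if all $(\bk_sH)(c_p)=0$ and $0$ otherwise. In $\tau_p$, with counterclockwise edges $c_p,e''_p,e'_p$, the admissibility condition (Figure~\ref{fig:forbidden2}) forbids exactly the pattern $(s(\gamma\cap e''_p),s(\gamma\cap e'_p))=(-1,1)$ on a component $\gamma$ of $\al\cap\tau_p$; hence for admissible $s$ each difference $s(\gamma\cap e'_p)-s(\gamma\cap e''_p)$ lies in $\{0,-2\}$, so by (a) the value $(\bk_sH)(c_p)=\bk_s(e'_p)-\bk_s(e''_p)=\sum_\gamma\bigl(s(\gamma\cap e'_p)-s(\gamma\cap e''_p)\bigr)$ is $\le 0$, and it vanishes iff $s$ is $\omega$-equivariant on the pair $\{e'_p,e''_p\}$. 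Globally, I would argue that equivariance on all the pairs $\{e'_p,e''_p\}$, $p\in\ocP$, is equivalent to full $\omega$-equivariance: given $b\neq d$ with $\omega(b)=\omega(d)=a$, the copies in $\omega^{-1}(a)$ form a nested chain in which two consecutive members co-bound a disk whose interior meets exactly one $D_p$ and therefore constitute a pair $\{e'_p,e''_p\}$ (this is where I would read off the explicit construction of the lift $\D$), so a component of $\al\cap D$ (Figure~\ref{fig:fake3a}) runs through a chain of fake triangles along which the common $s$-value propagates from $b$ to $d$. Combining, $(\bk_sH)(c_p)=0$ for all $p\in\ocP$ iff $s\in\St^*(\al,\D)$; and when $s\in\St^*(\al,\D)$, \eqref{eq.m6b} gives $\bk_s=\bk_\bs\Omega$, so $\bk_sH=\bk_\bs\Omega H=\bk_\bs\bH$ by \eqref{eq.55b}, whence $\pi(x^{\bk_sH})=x^{\bk_\bs\bH}$. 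The main obstacle is precisely this global equivariance claim — extracting the nested structure of $\omega^{-1}(a)$ from the definition of $\D$ and matching consecutive pairs with the triangles $\tau_p$ — together with fixing the orientation conventions in the admissibility rule for the fake triangle so that the sign in the inequality comes out as stated.
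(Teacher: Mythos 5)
Your proposal follows the same route as the paper: part (a) via Lemma~\ref{r.Qtau0}, part (b) via Lemma~\ref{r.even1} together with part (a), and part (c) by combining the local admissibility inequality on each fake triangle $\tau_p$ with the identity $\bk_s=\bk_{\bs}\Omega$ of \eqref{eq.m6b}. You supply a bit more detail than the published argument at two places that the paper asserts without comment: that $\bk_s$ is $\D$-balanced (your parity count of arc endpoints is exactly the right justification), and, more substantially, that non-membership in $\St^*(\al,\D)$ forces non-equivariance on one of the distinguished pairs $\{e'_p,e''_p\}$. The paper simply states ``there is a $p\in\ocP$ such that $s$ is not $\omega$-equivariant on $e'_p,e''_p$,'' leaving the reduction from general pairs $b,d$ to the fake-triangle pairs implicit; you correctly identify this as the crux and sketch the right mechanism (two consecutive members of $\omega^{-1}(a)$ cobound a bigon in $\bsS$ containing exactly one $D_p$, hence are the legs of a single fake triangle, and the $s$-value propagates along the chain). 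That nested-chain claim does hold for a lift $\D$ --- if the bigon between consecutive copies contained two fake triangles they would share an edge which would be yet another copy of $a$ strictly between, contradicting consecutiveness --- so your sketch can be completed. One small discrepancy: you conclude $(\bk_sH)(c_p)\le 0$ with equality iff equivariance on $\{e'_p,e''_p\}$, whereas the paper's text states $\bk_s(e'_p)\ge\bk_s(e''_p)$. With the stated counterclockwise order $c_p,e''_p,e'_p$ and the forbidden pattern $(C(a),C(b))=(-1,1)$ of Figure~\ref{fig:forbidden1}, your sign appears to be the right one; in any case the discrepancy is harmless, since only the vanishing/non-vanishing of $(\bk_sH)(c_p)$ enters the computation of $\pi$.
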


\begin{proof}

(a) is a special case of Lemma \ref{r.Qtau0}.
\def\tbk{\tilde {\bk}}

(b) Since $\bk_s$ is $\D$-balanced, $\bk_sH$ has even entries by Lemma \ref{r.even1}.
  Because $s\in \St^*(\al,\D)$, $\bk(e'_p) = \bk(e''_p)$ for all $p\in \ocP$. Part (a) shows that $(\bk_s H)(c_p)=0$ for all $p\in \ocP$, which means
$ x^{\bk_s H} \in \bXD$.

(c) The admissibility shows that $\bk_s(e'_p) \ge \bk(e''_p)$, and equality holds if and only if $s$ is $\omega$-equivariant on $e'_p, e''_p$.

First suppose $s\in \St^*(\al,\D)$. Part (b) and then \eqref{eq.m6b} show that
$$ \pi (x^{\bk_s H} )= x^{\bk_s H} = x^{\bk_\bs \Omega H} = x^{\bk_\bs \bH}.$$

Now suppose $s\not\in \St^*(\al,\D)$. Then there is a $p \in \ocP$ such that $s$ is not $\omega$-equivariant on $e'_p, e''_p$, and hence $\bk_s(e'_p) > \bk_s(e'')$. Part (a) shows that
 $(\bk_s H)(c_p)> 0$, and
 $(\bk_s H)(c_{p'})\ge 0$ for all other $p'\in \ocP$.  Hence, $\pi(x^{\bk_s H})=0$.
\end{proof}

\subsection{Proof of Proposition \ref{r.homo4}}

\begin{lemma}
\lbl{r.85}
One has $\bPsi(\cY^{(2)}(\bD))\subset \bX(\D)$.
\end{lemma}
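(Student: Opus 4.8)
The plan is to verify directly that $\bPsi$ carries the preferred $\cR$-basis of $\cY^{(2)}(\bD)$ into $\bX(\D)$. Recall that $\cY^{(2)}(\bD)$ has $\cR$-basis $\{y^\bk \mid \bk\in(2\BZ)^{\oL}\}$, that $\bPsi(y^\bk)=x^{\bk\bH}$ by \eqref{eq.bpsi}, and that $\bX(\D)$ is, by definition, the $\cR$-span inside $\XhalfD$ of the normalized monomials $x^\bm$ with $\bm\in(2\BZ)^{\D}$ and $\bm(c_p)=0$ for every $p\in\ocP$. Thus the whole statement reduces to the claim: for $\bk\in(2\BZ)^{\oL}$, the vector $\bm:=\bk\bH\in\BZ^{\D}$ has all entries even and satisfies $\bm(c_p)=0$ for every $p\in\ocP$.

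The evenness is formal. Writing $\bk=2\bn$ with $\bn\in\BZ^{\oL}$ and using $\bH\in\Mat(\oL\times\D,\BZ)$, one gets $\bm=2(\bn\bH)\in(2\BZ)^{\D}$ with no reference to the geometry.

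For the vanishing along the $c_p$'s I would use the factorization $\bH=\Omega H$ of \eqref{eq.55b} to rewrite $\bm=\bk' H$ with $\bk':=\bk\Omega\in\BZ^{\oD}$; from the definition of $\Omega$ one has $\bk'(e)=\bk(\omega(e))$ for every inner edge $e$ of $\D$. Fix $p\in\ocP$ and let $\tau_p$ be the fake triangle with edges $c_p,e'_p,e''_p$. Since $c_p$ is an edge of no triangle of $\D$ other than $\tau_p$, Lemma \ref{r.m1}(a) computes the full $c_p$-coordinate as $\bm(c_p)=\bk'(e'_p)-\bk'(e''_p)=\bk(\omega(e'_p))-\bk(\omega(e''_p))$. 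But $\omega(e'_p)=\omega(e''_p)$, as recorded just before Lemma \ref{r.m1} (both edges arise from splitting one edge of $\La$ along the boundary of $D_p$), so $\bm(c_p)=0$. This gives $\bm\in(2\BZ)^{\D}$ with $\bm(c_p)=0$ for all $p\in\ocP$, hence $x^{\bk\bH}\in\bX(\D)$, and the lemma follows.

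I do not anticipate a genuine obstacle: once the two defining conditions of $\bX(\D)$ are made explicit, the parity condition is automatic from $\bk$ being even, and the vanishing condition is precisely the content already extracted in Lemma \ref{r.m1}(a) together with the fake-triangle identity $\omega(e'_p)=\omega(e''_p)$. The only points needing care are the bookkeeping --- that $\bPsi$ sends the chosen basis of $\cY^{(2)}(\bD)$ to monomials of exactly the shape $x^{\bk\bH}$, and that no triangle other than $\tau_p$ contributes to the $c_p$-coordinate --- both of which are immediate from the constructions in Section \ref{sec:puncture}.
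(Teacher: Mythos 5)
Your proof is correct and takes essentially the same route as the paper: both arguments rest on the factorization $\bH=\Omega H$, on Lemma \ref{r.m1}(a) to isolate the $c_p$-coordinate, and on the identity $\omega(e'_p)=\omega(e''_p)$ (which the paper phrases as ``every row of $\Omega$ is $\omega$-equivariant on $e'_p, e''_p$''). The only cosmetic difference is that the paper verifies the containment on the generators $y_a^{\pm 2}$ and then invokes the subalgebra property of $\bX(\D)$, whereas you check the full preferred basis $\{y^\bk \mid \bk\in(2\BZ)^{\oL}\}$ directly and so must also record the (trivial) parity of $\bk\bH$; this is the same computation packaged slightly differently.
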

\begin{proof} By definition, $\bPsi(y_a^{\pm 2})= x^{\pm 2 \bH(a)} = X^{\pm  \bH(a)} = X^{\pm \Omega H(a)}$. Note that every row of $\Omega$ is $\omega$-equivariant on $e'_p, e''_p$ for all $p\in \ocP$. Hence, by  Lemma \ref{r.m1}(a), $ \Omega H(a)(c_p)=0$, which means
$X^{\pm \Omega H(a)} \in \bXD.$ Since $y_a^{\pm 2}$ with $a\in \La$ generate $\cY^{(2)}(\bD)$, we have
$\bPsi(\cY^{(2)}(\bD))\subset \bX(\D)$.
\end{proof}
For a knot $\al \subset (\Sigma \setminus \pS) \subset \sS$ let $\bk_\al\in \BZ^\oL$ be defined by $\bk_\al(e)=\mu(\al,e)$ for all $e\in \oL$.
\begin{lemma}
\lbl{r.in2}
Suppose $\al$ is a $\bD$-simple knot in $\bsS$.
Then $\bPsi(y^{\bk_\al}) \in \bX(\D)$.
\end{lemma}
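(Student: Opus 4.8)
The plan is to unwind the definitions and then invoke the two arithmetic lemmas already established. By \eqref{eq.bpsi}, $\bPsi(y^{\bk_\al}) = x^{\bk_\al \bH}$, and since $\bH = \Omega H$ by \eqref{eq.55b}, we have $\bk_\al \bH = \bm H$, where $\bm := \bk_\al \Omega \in \BZ^\oD$ is the vector with $\bm(b) = \bk_\al(\omega(b))$ for every $b \in \oD$. Recalling that $X_a = x_a^2$, the element $x^{\bm H}$ lies in $\bX(\D)$ precisely when (i) $\bm H$ has even entries and (ii) $(\bm H)(c_p) = 0$ for every $p \in \ocP$. So it suffices to verify (i) and (ii).

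For (i): the associated marked surface $\SM$ has non-empty boundary (each removed disk $D_p$ produces a boundary circle, and if $\ocP = \emptyset$ then $\pS = \partial\bsS \neq \emptyset$ by triangulability), so Lemma \ref{r.even1} applies and reduces (i) to showing that $\bm$ is $\D$-balanced, i.e.\ $\bm(\tau) \in 2\BZ$ for every $\tau \in \cF(\D)$, where a boundary edge --- in particular each $c_p$ --- contributes $0$. I would verify this against the explicit description of the lift $\D$ in Section \ref{sec.ass}: the triangles of $\D$ that are not fake triangles are in natural bijection with the triangles $\tau_0$ of $\La$, and under $\omega$ the edges of such a $\tau$ are carried, with multiplicity, onto the edges of $\tau_0$, so that $\bm(\tau) = \bk_\al(\tau_0)$, which is even since the number of intersection points of a closed curve with the boundary of a triangle is even. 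For a fake triangle $\tau_p$, with edges $c_p, e'_p, e''_p$ and $\omega(e'_p) = \omega(e''_p)$, we get $\bm(\tau_p) = 0 + \bk_\al(\omega(e'_p)) + \bk_\al(\omega(e''_p)) = 2\,\bk_\al(\omega(e'_p))$, again even. Hence $\bm$ is $\D$-balanced and (i) holds.

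For (ii): apply Lemma \ref{r.m1}(a) with $\bk = \bm \in \BZ^\oD$, giving $(\bm H)(c_p) = \bm(e'_p) - \bm(e''_p) = \bk_\al(\omega(e'_p)) - \bk_\al(\omega(e''_p)) = 0$, once more because $\omega(e'_p) = \omega(e''_p)$. Combining (i) and (ii) yields $x^{\bk_\al \bH} = x^{\bm H} \in \bX(\D)$, as asserted.

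I do not expect a genuine obstacle: both algebraic ingredients (Lemmas \ref{r.even1} and \ref{r.m1}) are available, and the only delicate point is the bookkeeping identification of each triangle of $\D$ as either a fake triangle or a lift of a triangle of $\La$. It is worth noting that $\bD$-simplicity of $\al$ plays no role in this monomial-level statement; the hypothesis appears because the lemma, combined with Lemma \ref{r.85} and Lemma \ref{r.gen.Ye} applied to $\bsS$, is exactly what will yield Proposition \ref{r.homo4}, namely $\bPsi(\cY^\ev(\bD)) \subset \bX(\D)$.
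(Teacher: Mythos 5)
Your proof is correct and tracks the paper's closely. The paper's version is terser: it replaces $\al$ by a $\La$-normal representative, writes $\bk_\al\Omega = \bk_s$ for the all-$(+1)$ state $s$, observes $s\in\St^*(\al,\D)$ (i.e.\ $s$ is $\omega$-equivariant), and then cites Lemma \ref{r.m1}(b). That lemma is exactly the package of the two conditions you check by hand, namely that the exponent vector has even entries (Lemma \ref{r.even1}, granted $\bk_s$ is $\D$-balanced) and that it vanishes on each $c_p$ (Lemma \ref{r.m1}(a)). Your argument is therefore the same computation with the state vocabulary stripped away. Your direct verification of $\D$-balancedness --- matching non-fake triangles of $\D$ with the triangles of $\La$ and handling fake triangles via the $\omega$-symmetry of their two non-$c_p$ edges --- makes explicit a step that the paper leaves as an unjustified assertion (``Since $\bk_s$ is $\D$-balanced\ldots'') inside the proof of Lemma \ref{r.m1}(b); a slicker route to the same conclusion is to note that $\bm(e)=\mu(\al,e)$ is the intersection vector of a closed curve in $\Sigma$, and a closed curve meets the boundary of any triangle an even number of times. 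Your closing observation that $\bD$-simplicity is logically superfluous for this monomial-level statement, and is only imposed to match the generating set supplied by Lemma \ref{r.gen.Ye} in the proof of Proposition \ref{r.homo4}, is also correct.
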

\begin{proof} We can assume that $\al$ is $\La$-normal. Then $\bk_\al \Omega= \bk_s\in \BZ^\oD$, where $s\in \St(\al,\D)$ is defined by $s (a\cap \al)= \mu(a,\al)= |a\cap \al|$. Then $s \in \St^*(\al,\D)$. By definition,
$$ \bPsi(y^{\bk_\al})= x^{\bk_\al \bH} = x^{\bk_\al \Omega H}=x^{ \bk_s H} \in \bXD,$$
where for the last inclusion we use  Proposition \ref{r.m1}(b).
\end{proof}

\begin{proof}
[Proof of Proposition \ref{r.homo4}] By Lemma \ref{r.gen.Ye}, $\cY^\ev(\bD)$ is generated by $\cY^{(2)}(\bD)$ and
$y^{\bk_\al}$ with $\bD$-simple knots $\al$. Lemmas \ref{r.85} and \ref{r.in2} show that
$\bPsi(\cY^\ev(\bD)) \subset \bXD$.
\end{proof}

\def\bare{\bar e}
\def\barl{\bar l}
\def\bff{\bar{\mathfrak f}}
\def\bgg{\bar{\mathfrak g}}
\def\ded{e}
\def\hLa{\hat \Lambda}
\def\hL{\hat \Lambda}
\def\bV{\bar V}

\subsection{Knots crossing an edge once}

 Suppose $\al\subset (\Sigma \setminus \pS) \subset \sS$ is a $\D$-normal oriented knot crossing an edge $a \in \oL$ once.
 For $r\in \Col(\al,\La)$ and $s\in \Col(\al,\D)$ one can define rational numbers $\uu(s)$ and $ \uu(r)$ as in Section\ref{sec.73}.
\begin{lemma} \lbl{r.m2}
Suppose $s\in \Col^*(\al,\D)$. Then $\uu(s)= \uu(\bs)$.
\end{lemma}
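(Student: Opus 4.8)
The plan is to compare the two sums defining $\uu(s)$ (with respect to $\D$) and $\uu(\bs)$ (with respect to $\La$) term by term, using the explicit description of the triangles of a lift. Recall that $\cF(\D)$ consists of the fake triangles $\tau_p$, one for each $p\in\ocP$, together with one ``genuine'' triangle $\tau^{\D}$ for each $\tau\in\cF(\La)$, obtained by pushing some sides of $\tau$ past the disks $D_p$ lying in $\tau$; the contraction $\omega$ restricts to a cyclic-order- and orientation-preserving bijection from the side set of $\widehat{\tau^{\D}}$ onto the side set of $\widehat\tau$, so $Q_{\widehat{\tau^{\D}}}$ and $Q_{\widehat\tau}$ become literally the same standard $3\times 3$ matrix under this identification. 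Since $\uu(s)=\sum_{\sigma\in\cF(\D)}\ff(\sigma;s)$ and $\uu(\bs)=\sum_{\tau\in\cF(\La)}\ff(\tau;\bs)$ by \eqref{eq.uu}, it suffices to prove: (i) $\ff(\tau_p;s)=0$ for every $p\in\ocP$; and (ii) $\ff(\tau^{\D};s)=\ff(\tau;\bs)$ for every $\tau\in\cF(\La)$. Summing over $\cF(\D)$ then yields $\uu(s)=\uu(\bs)$.

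For (i): since $\al$ lies in $\Sigma\setminus\pS$ and each $c_p$ lies on $\pS$, $\al$ is disjoint from $c_p$, and together with $\D$-normality this forces every component $\gamma$ of $\al\cap\tau_p$ to run from $e'_p$ to $e''_p$ (see Figure \ref{fig:fake3}). Hence in formula \eqref{eq.ff} only the entry $Q_{\widehat{\tau_p}}(e'_p,e''_p)$ is ever used; the two endpoints of a single $\gamma$ form one of the pairs $U_i$ and so do not enter the $\ll$-sum, while $\omega$-equivariance of $s$ makes $s$ agree on them. For two distinct components, say $\gamma$ preceding $\gamma'$ along $\al$, exactly the two ``cross'' pairs (the $e'_p$-endpoint of one with the $e''_p$-endpoint of the other) contribute; one appears in the $\ll$-sum with its $e'_p$-point smaller and the other with its $e''_p$-point smaller, hence with opposite values of $Q_{\widehat{\tau_p}}$, while $\omega$-equivariance gives both the same product of $s$-values, so the two cancel. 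Summing over pairs of components gives $\ff(\tau_p;s)=0$.

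For (ii): I would set up a bijection from the crossing set $V_{\tau^{\D}}$ in $\widehat{\tau^{\D}}$ to the crossing set $\bar V_\tau$ in $\widehat\tau$ by sending a crossing of $\al$ with a non-pushed side of $\tau$ to itself, and a crossing of $\al$ with a pushed-in diagonal $d$ of $\tau^{\D}$ to the crossing of $\al$ with $\omega(d)$ at the other end of the component of $\al\cap\tau_p$ through it (these two crossings are $\omega$-equivariantly related, so $s$ takes equal values on them). Under this bijection the side labels $e(\cdot)$ match via $\omega$ — hence $Q_{\widehat{\tau^{\D}}}$ and $Q_{\widehat\tau}$ agree on matched pairs — the values of $s$ match those of $\bs$ (by restriction for the non-pushed crossings, by $\omega$-equivariance for the ones on pushed diagonals), and the relation $\ll$ together with the family $\{U_i\}$ of excluded pairs is preserved, because passing from $\La$ to $\D$ only subdivides an arc of $\al$ by inserting each new $d$-crossing immediately after the matched $\omega(d)$-crossing in the order along $\al$, changing neither the order among the surviving crossings nor which pairs are of the form $U_i$. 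Comparing \eqref{eq.ff} term by term then gives $\ff(\tau^{\D};s)=\ff(\tau;\bs)$.

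I expect step (ii) to be the main technical obstacle: one must verify carefully that $\ll$ and the set $\{U_i\}$ transform exactly as claimed under the subdivision of $\al$ forced by the fake triangles, and carry this out uniformly for triangles $\tau\in\cF(\La)$ containing two or three of the disks $D_p$ (so that two or three sides get pushed simultaneously). The combinatorial cancellations in step (i) and the matching of face matrices are routine once this bookkeeping is pinned down.
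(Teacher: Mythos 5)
Your plan follows the paper's proof: you split $\cF(\D)$ into fake triangles and one non-fake triangle per $\nu\in\cF(\La)$, show the fake ones contribute $0$, and show the non-fake contributions match. Your step (i) is exactly the paper's Claim~2 and the cancellation argument is correct. Step (ii) is the paper's Claim~1, and here there is a real (if local) flaw in the bijection you write down. You send a crossing of $\al$ with a pushed-in diagonal $d$ to ``the crossing of $\al$ with $\omega(d)$ at the other end of the component of $\al\cap\tau_p$ through it.'' That is only what happens when $\nu$ contains a single disk $D_p$. When $\nu$ contains two or three disks, the fake triangle $\tau_p$ adjacent to $d$ has its other non-$c_p$ side equal to \emph{another} diagonal $d'$, not to $\omega(d)$; the component of $\al\cap\tau_p$ therefore ends on $d'$, and you must continue through a chain of fake triangles to reach $\omega(d)$. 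The equality $s(\text{at }d)=s(\text{at }\omega(d))$ still holds, but it uses $\omega$-equivariance on the disk co-bounded by $d$ and $\omega(d)$ (which contains all the intermediate fake triangles), not a single $\tau_p$. So the ``bookkeeping'' you flag as the main obstacle is indeed where the proposal, as written, breaks.

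The paper avoids the chaining entirely by working inside the split-off triangle $\hnu$. One notes that $\widehat{\tau^\D}$ sits inside $\hnu$ and is $\cP_\nu$-isotopic to it, so a component $\beta_i$ of $(\bpr)^{-1}(\al)$ in $\hnu$ meets $\widehat{\tau^\D}$ in a single sub-arc $\beta_i'=\beta_i\cap\widehat{\tau^\D}$. The bijection $V_\nu\to V_{\tau^\D}$ is then simply ``endpoints of $\beta_i$ $\mapsto$ endpoints of $\beta_i'$'': it is uniform in the number of disks, visibly order-preserving, and matches the excluded pairs $U_i$ (both come from the same $\beta_i$). This is the same bijection you are describing, but the restriction picture makes the order- and $U_i$-preservation transparent and sidesteps the fake-triangle iteration. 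With that reformulation of step (ii), your decomposition and your argument for step (i) carry the rest of the proof unchanged.

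Two smaller remarks. First, your assertion that each new $d$-crossing is inserted ``immediately after the matched $\omega(d)$-crossing'' in the order along $\al$ should be ``immediately adjacent to'' --- whether before or after depends on the direction in which $\al$ crosses into the fake region. Second, in step (i) you should also note that the same-side pairs $(u_i,u_j)$ and $(v_i,v_j)$ contribute $0$ because $Q_{\widehat{\tau_p}}$ vanishes on repeated edges; you implicitly use this when you say only the cross pairs contribute.
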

\begin{proof}
\def\htau{{\hat \tau}}
\def\hnu{{\hat \nu}}
\def\bpr{\overline{\pr}}

Recall that
\be
2 \uu(s) = -\sum_{\tau\in \cF(\D)} \ff(\tau;s), \quad
2 \uu(\bs) = -\sum_{\nu\in \cF(\La)} \ff(\nu;\bs).
\ee
Let $\cF^f(\D)$ be the set of all fake triangles of $\D$ and $\cF^*(\D)= \cF(\D) \setminus \cF^f(\D)$. The lemma clearly follows from the following two claims.

{\em Claim 1.} There is a bijection $\sigma: \cF(\La) \to \cF^*(\D) $ such that
$
 \ff(\nu;\bs) = \ff(\sigma(\nu), s).
$

\def\bA{\bar A}
{\em Claim 2.}
If  $\tau\in \cF^f(\D)$ then $\ff(\tau;s)=0$.

{\em Proof of Claim 1.} Suppose $\nu\in \cF(\La)$. Then $\nu$ contains one or several triangles of $\D$, and exactly one of them, denoted by $\sigma(\nu)$, is non-fake. The map $\sigma: \cF(\La) \to \cF^*(\D) $ is a bijection.

By splitting all the inner edges of $\La$, from $\nu$ one gets triangle $\hnu$, with a projection $\bpr: \hnu\to \nu$. Let $\cP_\nu$ be the set of vertices of $\hnu$.
Let $\tau=\sigma(\nu)$.
 One can identify $\htau$  with $(\bpr)^{-1}(\tau)\subset \hnu$. Then $\htau$ and $\hnu$ are $\cP_\nu$-isotopic, and $(\bpr)^{-1}(\al)$ intersects $\htau$ and $\hnu$ by the same patterns in the following sense:
First, there is a bijection $\sigma$ from the edges of $\hnu$ to the edges of $\htau$ such that
$a$ is $\cP_\nu$-isotopic to $\sigma(a)$. Second, if  $\beta_1, \dots, \beta_l$ are all the connected components of $(\bpr)^{-1}(\al)$ in $\hnu$, then all the connected components of $\pr^{-1}(\al)$ in $\htau$ are $\beta'_i=\beta_i\cap \htau$, $i=1,\dots,l$. See Figure \ref{fig:nonfake}.

\FIGc{nonfake}{Left: outer triangle $\hnu$ is $\cP_\nu$-isotopic to inner triangle $\htau=\widehat{\sigma(\nu)}$.
Right: $(\bpr)^{-1}(\al)$ intersects $\hnu$ and $\htau$ by same pattern.}{2.5cm}

Let $z_i,t_i$ (resp. $z'_i,t'_i$) be the respectively the beginning point and the ending point of $\beta_i$ (resp. $\beta'_i$).
Then $V_\nu= \{ z_1,t_1,\dots,z_l, t_l\}$ and  $V_\tau= \{ z'_1,t'_1,\dots,z'_l, t'_l\}$.
The map $\sigma: V_\nu \to V_\tau$, defined by $\sigma(z_i)= z'_i$ and $\sigma(t_i)=t'_i$, preserves the order, and actually gives a bijection from the set $\{ u,v \in V_\nu, u \ll v\}$ to the set
$\{ u,v \in V_\tau, u \ll v\}$.

By definition~\eqref{eq.ff},
\begin{align}
 \ff(\tau;s)
\lbl{eq.55a} & = \sum_{ u,v \in V_\tau, u \ll v } Q_\htau({e(u)}, {e(v)}) s(u) s(v)\\
\lbl{eq.55c} \ff(\nu;r) &= \sum_{u,v \in V_\nu, u \ll v } Q_\hnu({e(u)}, {e(v)}) s(u) s(v).
\end{align}

The $\cP_\nu$-isotopy and the fact that $s\in \St^*(\D)$  show that for $u,v\in V_\nu$,
\be
  Q_\hnu(e(u),e(v))= Q_{\htau}(e(\sigma(u)), e(\sigma(v))), \quad \bs(u) = s(\sigma(u)).
\ee
 Hence, \eqref{eq.55a} and \eqref{eq.55c} show that $
 \ff(\tau;s) = \ff(\nu, \bs)
$, completing the proof of Claim 1.

{\em Proof of Claim 2.}
Suppose $\tau=\tau_p$ is a fake triangle, with edges $c_p, e''_p, e'_p$ in counterclockwise order, see Figure  \ref{fig:fake3}.  Suppose $\al\cap \tau$ consists of intervals whose lifts to $\htau$ are $\beta_1,\dots, \beta_l$. Let $u_i, v_i$ be the  end points of $\beta_i$ respectively on the lift of $e'_p$  and the lift of $e''_p$. By renumbering we can assume that each of $u_i, v_i$ is smaller than each of $u_j, v_j$ if $i<j$. By definition,
$$ \{ (u,v) \in (V_\tau)^2, \ u\ll v\}= \bigsqcup_{1\le i< j \le k} \{ (u_i,v_j), (v_i, u_j)\}.$$
Hence \eqref{eq.55a} can be rewritten as
\begin{align*}
\ff(\tau;s)  
&=\sum_{1\le i< j \le l} \left[ Q_{\htau}(e(u_i), e(v_j)) s(u_i) s(v_j) + Q_{\htau}(e(v_i), e(u_j)) s(v_i)s(u_j) \right]\\
&=\sum_{1\le i< j \le l} \left[ Q_{\htau}(e'_p,e''_p) s(u_i) s(v_j) + Q_{\htau}(e''_p, e'_p) s(v_i)s(u_j) \right]=0.
\end{align*}
 Here the last equalities holds since $s(u_i)= s(v_i)$ (because $s\in \St^*(\al,\D)$)and $Q_\htau$ is anti-symmetric. This completes the proof of Claim 2 and the lemma.
\end{proof}

\def\buu{\bar{\uu}}
\def\hZeLa{\tilde \cY^\ev(\bD)}
\def\fB{\mathfrak B}
\def\XoD{\sX_{++}(\oD)}
\def\bvphi{\bar\varphi}
\def\bPhi{\bar \Phi}

\begin{proposition}\lbl{r.68}
If $\al\subset \Sigma \setminus \pS$ is a $\D$-normal oriented knot crossing an edge $a\in \oL$ once, then
\be
\lbl{eq.m10}
\bar\varphi_\D(\al) = \sum_{r \in \St(\al,\bD)} q^{\uu(r)} x^{\bk_r \bH}.
\ee
\end{proposition}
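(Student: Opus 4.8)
The plan is to obtain \eqref{eq.m10} by pushing the formula for $\varphi_\D(\al)$ on the associated marked surface (the identity \eqref{eq.m1}, i.e. the Corollary to Proposition~\ref{r.fc2}) through the canonical projection $\pi\colon\XD\to\bXD$, and then using the preparatory lemmas of Section~\ref{sec.simple} to discard the terms that die under $\pi$ and to match the remaining $q$-exponents with the $\La$-side quantities.

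First I would verify the hypotheses needed to invoke \eqref{eq.m1}. The edge $a\in\oL$ is an inner edge of $\bsS$, and since $\Sigma$ is obtained from $\bsS$ only by deleting the interiors of the disks $D_p$, the arc $a$ remains an inner edge of $\D$; thus $a\in\oD$ and $\oL\subset\oD$. As $\SM$ is a genuine marked surface ($\cP\subset\pS$) and $\al$ is $\D$-normal with $|\al\cap a|=1$ for $a\in\oD$, the identity \eqref{eq.m1} applies and gives $\varphi_\D(\al)=\sum_{s\in\St(\al,\D)}q^{\uu(s)}\,x^{\bk_s H}$, an equality in $\XhalfD$ (and in fact in $\XD$, each $\bk_s$ being $\D$-balanced).

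Next, since $\bar\varphi_\D=\pi\circ\varphi_\D$ and $\pi$ is $\cR$-linear, I would apply $\pi$ term by term. By Lemma~\ref{r.m1}(c), $\pi(x^{\bk_s H})=0$ whenever $s\notin\St^*(\al,\D)$, while $\pi(x^{\bk_s H})=x^{\bk_{\bs}\bH}$ when $s\in\St^*(\al,\D)$ (this uses $\bH=\Omega H$ together with $\bk_s=\bk_{\bs}\Omega$). Hence only the $\omega$-equivariant states survive and $\bar\varphi_\D(\al)=\sum_{s\in\St^*(\al,\D)}q^{\uu(s)}\,x^{\bk_{\bs}\bH}$. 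Finally I would re-index: by the discussion in Section~\ref{sec.simple} the restriction map $s\mapsto\bs$ is a bijection of $\St^*(\al,\D)$ onto $\St(\al,\bD)$, and Lemma~\ref{r.m2} gives $\uu(s)=\uu(\bs)$ for every $s\in\St^*(\al,\D)$; substituting $r=\bs$ turns the last sum into $\sum_{r\in\St(\al,\bD)}q^{\uu(r)}\,x^{\bk_r\bH}$, which is \eqref{eq.m10}.

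So the proposition is essentially an assembly step, and I do not expect a serious obstacle: the genuine content has already been front-loaded into Lemma~\ref{r.m1} (vanishing of the non-equivariant monomials under $\pi$, via the fake-triangle edges $c_p$) and Lemma~\ref{r.m2} (equality of the exponents $\uu(s)=\uu(\bs)$, which rests on the cancellation for fake triangles in Claim~2 of its proof). The only thing requiring care is bookkeeping at the start — checking that $a\in\oL$ is genuinely an inner edge of $\D$, so that \eqref{eq.m1} is legitimately available for the associated marked surface — after which the chain of equalities is forced.
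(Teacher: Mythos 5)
Your proof matches the paper's argument step for step: start from \eqref{eq.m1}, apply $\pi$ term by term, invoke Lemma~\ref{r.m1}(c) to kill the non-$\omega$-equivariant states and convert the surviving monomials, then re-index via $s\mapsto\bs$ using Lemma~\ref{r.m2}. Your preliminary remark that $\oL\subset\oD$ (so \eqref{eq.m1} is indeed applicable) is a reasonable sanity check that the paper leaves implicit, and citing Lemma~\ref{r.m1}(c) for the conversion of surviving monomials is if anything more precise than the paper's citation of (b).
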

\begin{proof}
By \eqref{eq.m1}, we have
$$ \al = \sum_{s \in \St(\al,\D)} q^{\uu(s)} x^{\bk_s H}.$$
Since $\pi(x^{\bk_s H}) =0$ unless $s \in \St^*(\al,\D)$ (by Lemma \ref{r.m1}(c)), we have
\begin{align*}
\bar\varphi_\D(\al)=\pi(\al) & = \sum_{s \in \St^*(\al,\D)} q^{\uu(s)} \pi (x^{\bk_s H}) \\
&= \sum_{s \in \St^*(\al,\D)} q^{\uu(s)} x^{\bk_{\bs} \bH} \quad \text{by Lemma \ref{r.m1}(b) }\\
&= \sum_{r \in \St(\al,\La)} q^{\uu(r)} x^{\bk_{r} \bH} \quad \text{by Lemma \ref{r.m2} }.
\end{align*}
For the last equality we also use the bijection $\St^*(\al,\D) \to \St(\al,\La)$ given by $s\to \bs$.
\end{proof}

\subsection{Proof of Theorem \ref{r.main1}}
\begin{proof}
[Proof of Theorem \ref{r.main1}]

(a)
Suppose $\al \subset \Sigma$ is a $\cP$-knot crossing an edge $a\in \oL$ once. Choose an  orientation of $\al$.
By Proposition \ref{r.68} and Equation \eqref{eq.bpsi},
 \be
 \lbl{eq.1s}
 \bar\varphi_\D(\al) = \sum_{r \in \St(\al,\bD)} q^{\uu(r)} x^{\bk_r \bH}
 =  \bPsi \left( \sum_{r \in \St(\al,\bD)} q^{\uu(r)} y^{\bk_r} \right),
 \ee
 which shows $\bar\varphi_\D(\al)$ is in the image of $\bPsi$. Since $\cP$-knots crossing one of the edges in  $\oL$ once generate $\ooS$, $\bar\varphi_\D(\ooS)$ is in the image of $\bPsi$. This proves part (a).

 (b) We have to show that $\bvk_\D:= (\bPsi)^{-1} \circ \bvpD$ coincides with $\tr_q^\La$.
 Identities  \eqref{eq.1s} and \eqref{eq.m1} show that $\bvk_\D =\tr_q^\La$ on a $\cP$-knot crossing an edge $a\in \oL$ once.  Since $\cP$-knots crossing one of the edges in  $\oL$ once generate $\ooS$,
  we have $\bvk_\D =\tr_q^\La$.
\end{proof}

\begin{remark}
In \cite{BW0}, the quantum trace is constructed with a domain bigger than $\ooS$. Namely, the domain is so called {\em state skein algebra} of a generalized marked surface. Using \eqref{eq.arctoskein} one can also recover the quantum trace map in this bigger domain through the skein coordinates.
\end{remark}

\def\tYbl{\tilde \cY^\ev}

\appendix

\def\Stwo{\cS^{(2)}}
\def\tStwo{\ttS^{(2)}}
\def\cQ{\mathcal Q}

\section{Proof of Theorem \ref{r.shearchange}}
\lbl{sec.shearchange} Suppose $\D$ is a triangulation of the marked surface $\SM$. We identify $\ooS$ with a subset of $\XD$ via $\varphi_\D$. Thus,
 $X_a=a$ . We also write $a^{1/2} = x_a \in \XhalfD$ for $a\in \D$,
and  use the notation $Y_a = y_a^2 \in \cY^{(2)}(\D)\subset \cY(\D)$.

\subsection{The case of $\cY^{(2)}(\D)$}
Suppose $\cQ$ is a $\cP$-quadrilateral (see Section \ref{r.42}), with edges $a,b,c,d$ in some counterclockwise order. Define $[\cQ]:= \{ [a b^{-1} c d^{-1}], [a b^{-1} c d^{-1}]^{-1}\}$, which does not depend on the counterclockwise order.
 For now define $\cS^{(2)}$ to be the $\cR$-subalgebra of $\cS$ generated by all $[\cQ]$, where $\cQ$ runs the set of all $\cP$-quadrilaterals. Let $\ttS^{(2)}$ be the skew field of $\cS^{(2)}$, i.e. the set of all elements of the form $\al \beta^{-1}\in \ttS$ with $\al, \beta\in \Stwo, \beta \neq 0$.

Suppose $a\in \oD$, where $\D$ is a triangulation of $\SM$. Let $\cQ_{a,\D}$ be the $\cP$-quadrilateral consisting of the two triangles having $a$ as an edge. By \eqref{eq.shear},
\be
\{\psi(Y_a), \psi(Y_a)^{-1}\}= [\cQ_{a,\D}].
\ee
Since $\{ Y_a^{\pm1} \mid a \in \oD\}$ generates $\tY^{(2)}(\D)$, we have
\be
\lbl{eq.0s}
\tpsi_\D(\tY^{(2)}(\D)) \subset \tStwo.
\ee

\begin{lemma}\lbl{r.A1}
Suppose
 $\D'$ is the flip of $\D$ at $a\in \oD$ then $\tpsi_{\D'}(\tY^{(2)}(\D')\subset \tpsi_\D(\tY^{(2)}(\D)$.
\end{lemma}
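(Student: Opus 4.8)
The plan is to reduce the statement to the generators of $\tY^{(2)}(\D')$ and treat them one at a time. Since $\oD'=(\oD\setminus\{a\})\cup\{a^*\}$ and the quantum torus $\cY^{(2)}(\D')$ is generated as an $\cR$-algebra by $\{(Y'_v)^{\pm1}:v\in\oD'\}$, its skew field $\tY^{(2)}(\D')$ is generated as a skew field by the same set; as $\tpsi_{\D'}$ is an algebra embedding sending skew fields to skew fields and $\tpsi_\D(\tY^{(2)}(\D))$ is itself a skew field, it suffices to prove $\tpsi_{\D'}(Y'_v)\in\tpsi_\D(\tY^{(2)}(\D))$ for each $v\in\oD'$. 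I will use two facts. First, by construction (from \eqref{eq.44a} and $Y_w=y_w^2$) one has $\tpsi_\D(Y^\bk)=X^{\bk H}$ for every $\bk\in\BZ^\oD$; hence $\tpsi_\D(\tY^{(2)}(\D))$ contains $cX^\bm$ for every $c\in\cR$ and every $\bm$ in the subgroup $(\BZ^\oD)H\subset\BZ^\D$. Second, by Proposition \ref{r.42}(c), in $\cS\subset\sX(\D)$ one has the flip relation $a^*=[X_cX_eX_a^{-1}]+[X_bX_dX_a^{-1}]$ with $b,c,d,e$ the sides of $\cQ_{a,\D}$ labelled as in Figure \ref{fig:mutation}; equivalently $a^*=[X_cX_eX_a^{-1}]\,(1+q^\gamma\tpsi_\D(Y_a))$ for a suitable $\gamma$ with $q^\gamma\in\cR$, since the quotient of the two normalized monomials is $q^\gamma[X_bX_d(X_cX_e)^{-1}]=q^\gamma\tpsi_\D(Y_a)$ by \eqref{eq.shear}.

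Two cases are immediate. If $v=a^*$, then $\cQ_{a^*,\D'}$ and $\cQ_{a,\D}$ have the same four sides (the flip only swaps the diagonal $a$ for $a^*$), so \eqref{eq.shear} gives $\tpsi_{\D'}(Y'_{a^*})=\tpsi_\D(Y_a)^{-1}\in\tpsi_\D(\tY^{(2)}(\D))$. If $v\in\oD\setminus\{a\}$ and neither $\D$-triangle containing $v$ has $a$ as an edge, then both of those triangles survive the flip, so $\cQ_{v,\D'}=\cQ_{v,\D}$ and $\tpsi_{\D'}(Y'_v)=\tpsi_\D(Y_v)\in\tpsi_\D(\tY^{(2)}(\D))$.

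The remaining case is $v\in\{b,c,d,e\}$. Then exactly one of the two $\D'$-triangles containing $v$ lies in the flipped quadrilateral and so has $a^*$ as an edge, the other being a triangle of $\D$; consequently, writing out $\tpsi_{\D'}(Y'_v)$ and using Lemma \ref{r.Qtau0} for $\D'$, one finds that $\tpsi_{\D'}(Y'_v)$ is a product of skein edges of $\D'$ in which $a^*$ occurs to the first power, possibly inverted, the remaining edges all belonging to $\D$. Substituting the flip relation for $a^*$ and re-collecting via Proposition \ref{r.11s}, $\tpsi_{\D'}(Y'_v)$ is rewritten as a product of (i) normalized monomials $q^{r}X^\bm$ with $q^r\in\cR$ and (ii) factors $1+q^\gamma\tpsi_\D(Y_a)$ and their inverses. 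A short bookkeeping with the face matrix $Q$ shows that each exponent $\bm$ in (i) lies in $(\BZ^\oD)H$ — in fact the outcome is precisely the quantum shear-coordinate flip formula of Chekhov--Fock and Liu, of the shape $\tpsi_\D\big(Y_v(1+q^{\pm1}Y_a^{\pm1})^{\pm1}\big)^{\pm1}$. Since $1+q^\gamma Y_a$ is a nonzero element of $\cY^{(2)}(\D)$, it and its inverse lie in $\tY^{(2)}(\D)$; so every factor of $\tpsi_{\D'}(Y'_v)$ lies in $\tpsi_\D(\tY^{(2)}(\D))$, hence so does $\tpsi_{\D'}(Y'_v)$. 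Together with the two easy cases, this proves the lemma.

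The only real work is the face-matrix bookkeeping in the last case: checking that, after substituting the skein relation for $a^*$, the monomials produced have exponents in the lattice $(\BZ^\oD)H$, equivalently that this manipulation reproduces the Chekhov--Fock flip of the $Y$-variables. If one prefers not to track the $q$-powers explicitly, one can instead use that $\tpsi_{\D'}(Y'_v)$ is reflection invariant (Proposition \ref{r.homo}(b) together with reflection invariance of $\varphi_{\D'}$) and apply Lemma \ref{r.reflection} to normalize all scalars to $1$. The degenerate configurations $b=d$ or $c=e$ of Figure \ref{fig:mutation} cause no trouble, the relation still reading $aa^*=q\,ce+q^{-1}bd$.
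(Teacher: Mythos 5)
Your reduction to generators, the identity $\tpsi_{\D'}(Y_{a^*})=\tpsi_\D(Y_a)^{-1}$, and the treatment of edges away from the flipped quadrilateral all match the paper's argument. For the edges $v\in\{b,c,d,e\}$ the strategy (factor $a^*$ as $[X_cX_eX_a^{-1}]\,(1+q^\gamma\tpsi_\D(Y_a))$ using Proposition \ref{r.42}(c) and \eqref{eq.shear}, then substitute into $\tpsi_{\D'}(Y_v)$) is the right idea in the generic configuration.

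However, there is a genuine gap in the degenerate cases $b=d$ or $c=e$. Your structural claim that ``exactly one of the two $\D'$-triangles containing $v$ lies in the flipped quadrilateral, so $a^*$ occurs to the first power'' is simply false there: if $b=d$, then \emph{both} $\D'$-triangles at $v=b$ lie in the flipped quadrilateral, and $a^*$ appears squared (with sign) in $\tpsi_{\D'}(Y_b)=[X_{a^*}^{\pm 2}X_c^{\mp1}X_e^{\mp1}]$; similarly with $c=e$. Your closing remark that ``the degenerate configurations cause no trouble, the relation still reading $aa^*=q\,ce+q^{-1}bd$'' doesn't repair this, because the issue isn't the validity of the flip relation but the shape of $\tpsi_{\D'}(Y_v)$ as a monomial in $a^*$. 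In the paper these cases require separate computations: one gets $\tpsi_{\D'}(Y_b)=\tpsi_\D\bigl(Y_b+(q^{1/2}+q^{-1/2})[Y_aY_b]+[Y_a^2Y_b]\bigr)$ (and its $c=e$ mirror), not the first-order formula you assert. Your approach can be salvaged — substituting $a^*=[X_cX_eX_a^{-1}](1+q^\gamma Y_a)$ twice and commuting through yields a product of the required monomial with two factors of the form $(1+q^{\gamma_i}Y_a)$ — but that is additional work you have not done. Relatedly, the ``short bookkeeping with the face matrix'' asserting the residual monomial exponent lies in $(\BZ^\oD)H$ is precisely the substance of the lemma (the paper's explicit cases (A), (B), (C)); asserting it ``is the Chekhov--Fock flip formula'' as justification is not a proof, especially since identifying the present construction with the Chekhov--Fock coordinate change is part of what Theorem \ref{r.shearchange} is trying to establish.
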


 \begin{proof} Suppose the  boundary edges of $\cQ_{a,\D}$ are denoted as in Figure \ref{fig:mutation}.
 We have $\D'= \D \setminus \{a\} \cup \{a*\}$.
 Since $\tY^{(2)}(\D')$ is generated by $Y_v,v\in \oD'$, it is enough to show that $\psi_{\D'}(Y_v) \in \tY^{(2)}(\D)$. It is clear that if $v\not\in\{a,b,c,d,e\}$, then $\psi_{\D'}(Y_v) = \psi_{\D}(Y_v) \in \psi_{\D}(\tY^{(2)}(\D))$. Besides,
 \be
 \lbl{eq.s0}
 \psi_{\D'}(Y_{a^*}) = \psi_{\D}(Y_{a})^{-1} \in \psi_{\D}(\tY^{(2)}(\D)).
 \ee

   It remains to show $\psi_{\D'}(Y_v) \in \tY^{(2)}(\D)$ for $v\in \{b,c,d,e\}$. Since there is no self-folded triangle, if four edges $\{b,c,d,e\}$ are not pairwise distinct, then there  is exactly one pair of two opposite edges which are the same. Thus we have 3 cases (A), (B), and (C) below.

(A) Four edges $b,c,d,e$ are pairwise distinct. Using the explicit formula \eqref{eq.shear}, we have
\begin{align}
\psi_{\D'}(Y_{v }) &= \psi_{\D}(Y_{v} + [Y_v Y_a]) \quad \text{for $v\in \{b, d\}$} \lbl{eq.s1}\\
\psi_{\D'}(Y_{v }^{-1}) &= \psi_{\D}(Y_{v}^{-1} + [Y_v^{-1} Y_a^{-1}]) \quad \text{for $v\in \{c, e\}$}. \lbl{eq.s2}
\end{align}
(B) $b=d$, otherwise $b,c,d,e$ are pairwise distinct. Using  \eqref{eq.shear}, we have
\begin{align}
\psi_{\D'}(Y_{b }) &= \psi_{\D}(Y_{b} + (q^{1/2}+ q^{-1/2})[Y_aY_b] + [Y_a^2 Y_b])  \lbl{eq.s3} \\
\psi_{\D'}(Y_{v }^{-1}) &= \psi_{\D}(Y_{v}^{-1} + [Y_v^{-1} Y_a^{-1}]) \quad \text{for $v\in \{c, e\}$}. \lbl{eq.s4}
\end{align}
(C) $c=e$, otherwise $b,c,d,e$ are pairwise distinct. Using  \eqref{eq.shear}, we have
\begin{align}
\psi_{\D'}(Y_{c }^{-1}) &= \psi_{\D}(Y_{c}^{-1} + (q^{1/2}+ q^{-1/2})[Y_a^{-1}Y_c^{-1}] + [Y_a^{-2} Y_c^{-1}])   \lbl{eq.s5}\\
\psi_{\D'}(Y_{v }) &= \psi_{\D}(Y_{v} + [Y_v Y_a]) \quad \text{for $v\in \{b, d\}$}.  \lbl{eq.s6}
\end{align}
In each case,  $\psi_{\D'}(Y_v) \in \psi_\D(\in \tY^{(2)}(\D))$ for $v\in \{b,c,d,e\}$.
 \end{proof}

\begin{proposition}
\lbl{r.Ytwo}
One has $\tpsi_\D(\tY^{(2)}(\D))=\tStwo  $, which does not depend on $\D$.
\end{proposition}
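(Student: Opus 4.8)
The plan is to bootstrap from Lemma~\ref{r.A1} and the inclusion \eqref{eq.0s}, using that any two triangulations of $\SM$ are joined by a sequence of flips and that for each $\cP$-quadrilateral one may choose a triangulation adapted to it.

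\emph{Step 1: triangulation-independence.} I would first upgrade Lemma~\ref{r.A1} to an equality. If $\D'$ is the flip of $\D$ at $a\in\oD$, then $\D$ is the flip of $\D'$ at the new edge $a^*$, so applying Lemma~\ref{r.A1} in both directions yields $\tpsi_{\D'}(\tY^{(2)}(\D'))=\tpsi_\D(\tY^{(2)}(\D))$. Since any two triangulations of $\SM$ are connected by a finite sequence of flips, the sub-skew-field $\mathcal K:=\tpsi_\D(\tY^{(2)}(\D))$ of $\ttS$ does not depend on $\D$. By \eqref{eq.0s} we have $\mathcal K\subseteq\tStwo$, so the whole statement reduces to proving $\tStwo\subseteq\mathcal K$.

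\emph{Step 2: the reverse inclusion.} Because $\mathcal K$ is a sub-skew-field of $\ttS$ and $\tStwo$ is the skew field of the $\cR$-algebra $\Stwo$, which is generated by the elements $[\cQ]$ as $\cQ$ ranges over all $\cP$-quadrilaterals, it suffices to show $[\cQ]\in\mathcal K$ for every such $\cQ$. Given $\cQ$ with cyclically ordered edges $a,b,c,d$, choose a diagonal $e$; then $e$ cuts $\cQ$ into two $\cP$-triangles, the distinct $\cP$-arcs among $\{a,b,c,d,e\}$ are pairwise non-crossing, and by maximality they extend to a triangulation $\D$ of $\SM$ in which the two triangles adjacent to $e$ form exactly $\cQ$, i.e.\ $\cQ=\cQ_{e,\D}$. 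By \eqref{eq.shear} (squared, using $x_v^2=X_v=v$) one gets $\{\psi_\D(Y_e),\psi_\D(Y_e)^{-1}\}=[\cQ_{e,\D}]=[\cQ]$, and since $Y_e^{\pm1}\in\tY^{(2)}(\D)$ this shows $[\cQ]\subseteq\tpsi_\D(\tY^{(2)}(\D))=\mathcal K$ by Step~1. Hence $\Stwo\subseteq\mathcal K$, therefore $\tStwo\subseteq\mathcal K$, and together with \eqref{eq.0s} we conclude $\tpsi_\D(\tY^{(2)}(\D))=\mathcal K=\tStwo$, independently of $\D$.

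\emph{Main obstacle.} The only genuinely geometric point is the claim in Step~2 that every $\cP$-quadrilateral, together with a chosen diagonal, can be realized as $\cQ_{e,\D}$ inside some $\cP$-triangulation; this requires a little care for the degenerate quadrilaterals in which a pair of opposite edges coincide --- exactly the configurations (B) and (C) already isolated in the proof of Lemma~\ref{r.A1} --- where one uses $\cP\subset\pS$ to rule out self-folded triangles. Everything else is formal manipulation of Ore localizations, so I expect no surprises there.
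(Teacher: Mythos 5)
Your proposal is correct and follows essentially the same route as the paper: first use Lemma~\ref{r.A1} together with the flip-connectivity of triangulations to get triangulation-independence of $\tpsi_\D(\tY^{(2)}(\D))$, then extend a diagonal of an arbitrary $\cP$-quadrilateral $\cQ$ to a triangulation $\D$ with $\cQ=\cQ_{a,\D}$ so that $[\cQ]\in\tpsi_\D(\tY^{(2)}(\D))$, and combine with \eqref{eq.0s}. Your extra remark about degenerate quadrilaterals (opposite edges coinciding) is a worthwhile sanity check but is not needed as a separate case once one notes that the chosen diagonal and the (at most four) distinct edges of $\cQ$ are pairwise non-crossing and non-isotopic $\cP$-arcs, hence extendable to a triangulation.
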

\begin{proof}Lemma \ref{r.A1}, with $\D,\D'$ exchanged, shows that $\tpsi_{\D'}(\tY^{(2)}(\D'))= \tpsi_\D(\tY^{(2)}(\D))$. Any two triangulations are related by sequence of flips. Hence $\tpsi_\D(\tY^{(2)}(\D))$ does not depend on the triangulation $\D$.

Suppose $\cQ$ is a $\cP$-quadrilateral. Let $a$ be a diagonal of $\cQ$. Then the collection of $a$ and the edges of $\cQ$ can be extended to a triangulation $\D$ of $\SM$. Thus,  $[\cQ]= \psi_\D(a)^{\pm 1} \in \tpsi_\D(\tY^{(2)}(\D))$. Since all the $[\cQ]$ generate $\tStwo$, we have $\tStwo \subset \tpsi_\D(\tY^{(2)}(\D))$. Together with \eqref{eq.0s}, we have $\tStwo = \tpsi_\D(\tY^{(2)}(\D))$.
 \end{proof}

\begin{lemma}\lbl{r.a3}
For any two triangulations $\D,\D'$, the algebra isomorphism $\Theta_{\D,\D'}: \tY^{(2)}(\D')\to \tY^{(2)}(\D)$ defined by $\Theta_{\D,\D'} = \tpsi_\D^{-1} \circ \tpsi_{\D'}$ coincides with the coordinate change map $\Phi_{\D\D'}$ in \cite{Liu}\footnote{Our $q$ is equal to $q^2$ of \cite{Liu}}.
\end{lemma}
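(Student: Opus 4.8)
The plan is to reduce to a single flip by naturality and then to match the explicit mutation formulas against those of \cite{Liu}.

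First I would note that both families of maps are natural. For $\Theta_{\D\D'}:=\tpsi_\D^{-1}\circ\tpsi_{\D'}$ this is immediate: $\Theta_{\D\D}=\id$ and
\[
\Theta_{\D\D''}=\tpsi_\D^{-1}\circ\tpsi_{\D''}=(\tpsi_\D^{-1}\circ\tpsi_{\D'})\circ(\tpsi_{\D'}^{-1}\circ\tpsi_{\D''})=\Theta_{\D\D'}\circ\Theta_{\D'\D''},
\]
where Proposition~\ref{r.Ytwo} guarantees that $\tpsi_{\D'}\colon\tY^{(2)}(\D')\to\tStwo$ is an isomorphism so that $\tpsi_{\D'}^{-1}$ is defined. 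Naturality of the Chekhov--Fock/Liu coordinate change $\Phi_{\D\D'}$ is part of \cite{CF,Liu}. Since any two triangulations of $\SM$ are connected by a finite sequence of flips \cite{FST}, composing along such a sequence reduces everything to proving $\Theta_{\D\D'}=\Phi_{\D\D'}$ in the case $\D'=\D\setminus\{a\}\cup\{a^*\}$, the flip of $\D$ at a single inner edge $a\in\oD$.

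Next I would read off the flip formula for $\Theta_{\D\D'}$ directly from the proof of Lemma~\ref{r.A1}. There, for each generator $Y_v$ ($v\in\oD'$) of $\tY^{(2)}(\D')$, equations \eqref{eq.s0}--\eqref{eq.s6} write $\tpsi_{\D'}(Y_v)$ as $\tpsi_\D$ applied to an explicit element of $\tY^{(2)}(\D)$, which by definition is $\Theta_{\D\D'}(Y_v)$. So $\Theta_{\D\D'}(Y_w)=Y_w$ for $w\notin\{a,b,c,d,e\}$, $\Theta_{\D\D'}(Y_{a^*})=Y_a^{-1}$, and the four outer generators transform by \eqref{eq.s1}--\eqref{eq.s2} when $b,c,d,e$ are pairwise distinct, by \eqref{eq.s3}--\eqref{eq.s4} when $b=d$, and by \eqref{eq.s5}--\eqref{eq.s6} when $c=e$; these exhaust the cases since $\cP\subset\pS$ forbids self-folded triangles, so at most one opposite pair of outer edges can coincide.

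Finally I would compare with the Chekhov--Fock flip formula as recorded by Liu, rewritten in the variables $Y_a=y_a^2$, with the parameter substitution relating our $q$ to the $q^2$ of \cite{Liu} (the footnote), and with the cyclic labeling of the quadrilateral around $a$ matched to Figure~\ref{fig:mutation}. In the generic case the substitutions $Y_v\mapsto Y_v+[Y_vY_a]$ (for $v\in\{b,d\}$), $Y_v^{-1}\mapsto Y_v^{-1}+[Y_v^{-1}Y_a^{-1}]$ (for $v\in\{c,e\}$), and $Y_{a^*}\mapsto Y_a^{-1}$ are exactly the Weyl-normalized quantum mutation of shear coordinates, term for term. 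In the degenerate cases $b=d$ and $c=e$, the extra middle term with coefficient $q^{1/2}+q^{-1/2}$ appearing in \eqref{eq.s3} and \eqref{eq.s5} is precisely the cross term the Chekhov--Fock mutation produces when the two outer edges on one side of $a$ are identified (formally, the normalized square of the generic mutation factor). Agreement on a generating set then gives $\Theta_{\D\D'}=\Phi_{\D\D'}$, and the reduction above completes the proof. The hard part is purely bookkeeping: aligning the orientation and cyclic-order conventions with Liu's picture, tracking the $q$-powers absorbed into the normalizations $[\,\cdot\,]$, carrying the $q\leftrightarrow q^2$ dictionary through consistently, and checking the two degenerate configurations separately --- elementary, but exactly where the content of the lemma lies.
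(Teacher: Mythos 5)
Your proposal is essentially the paper's own proof: reduce to a single flip (the paper states this implicitly, you spell out the cocycle identity for $\Theta$ and appeal to flip-connectedness), read off $\Theta_{\D\D'}$ on the generators from \eqref{eq.s0}--\eqref{eq.s6} in cases (A), (B), (C), and match these against Liu's mutation formulas after the $q\leftrightarrow q^2$ substitution. The paper does exactly this, leaving the final comparison with \cite{Liu} as ``comparing with the formulas,'' so your added commentary about where the bookkeeping lies is consistent with what the paper actually does but does not write out.
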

\begin{proof}
It is enough to consider the case when $\D'$ is obtained from $\D$ by the flip at $a\in \oD$, with notations of edges $b,c,d,e$ as in Figure \ref{fig:mutation}. From Identities \eqref{eq.s0}--\eqref{eq.s6} and cases (A), (B), (C) as in the proof of Proposition \ref{r.Ytwo}, we have
\begin{align*}
\Theta_{\D,\D'} (Y_v) &= Y_v \quad \text{if  $v\not \in \{a,b,c,d,e\}$} \\
\Theta_{\D,\D'} (Y_{a^*}) &= (Y_a)^{-1} \\
\Theta_{\D,\D'} (Y_v) &= Y_{v} + [Y_v Y_a]  \quad \text{in case (A) and $v \in \{b,d\}$}\\
\Theta_{\D,\D'} (Y_v^{-1}) &= Y_{v}^{-1} + [Y_v^{-1} Y_a^{-1}]  \quad \text{in case (A) and $v \in \{c,e\}$}\\
\Theta_{\D,\D'} (Y_b) &= Y_{b} + (q^{1/2}+ q^{-1/2})[Y_aY_b] + [Y_a^2 Y_b]  \quad \text{in case (B)}\\
\Theta_{\D,\D'} (Y_v^{-1}) &= Y_{v}^{-1} + [Y_v^{-1} Y_a^{-1}]  \quad \text{in case (B) and $v \in \{c,e\}$}\\
\Theta_{\D,\D'} (Y_c^{-1}) &= Y_{c}^{-1} + (q^{1/2}+ q^{-1/2})[Y_a^{-1}Y_c^{-1}] + [Y_a^{-2} Y_c^{-1}]  \quad \text{in case (C)}\\
\Theta_{\D,\D'} (Y_v) &= Y_{v} + [Y_v Y_a]  \quad \text{in case (C) and $v \in \{b,d\}$}\\
\end{align*}
Comparing with the formulas in \cite{Liu}, we see that our $\Theta_{\D,\D'}$ is equal to $\Phi_{\D,\D'}$ in \cite{Liu}.
\end{proof}

\def\Tub{\mathrm{Tub}}

\subsection{Image of $\D$-simple curve under $\psi$}
 Suppose $\al$ is a $\D$-normal knot.
 Recall that $\al$ is $\D$-simple if $\mu(\al,a)=|\al\cap a|\le 1$ for all $a\in \cE(\al,\D)$. We say $\al$ is {\em almost $\D$-simple} if $\mu(\al,a)\le1$ for all $a\in \cE(\al,\D)$ except for an edge $d$, where $\mu(\al,d)=2$.
If $|\al\cap e|=1$ then $\al$ passes $e$ by one of the four patterns described in Figure \ref{fig:square6}. Let $\bk_{\al,\D}\in \BZ^\D$ be defined by $\bk_{\al,\D}(a)= |\al \cap a|$.
\FIGc{square6}{Patterns: (1) \& (2): unchanged pattern (3) left-right (4) right-left}{2.5cm}

\def\bve{\boldsymbol{\ve}}
\def\bod{\boldsymbol{\delta}}

\begin{lemma}\lbl{r.psi3}
 Suppose $\al$ is $\D$-simple or almost $\D$-simple knot. Then
 \begin{align}
  \lbl{eq.psi3}
\psi_\D(y^{\bk_{\al,\D}}) &=  X^{\bve_\al}=\left [ \prod_{e\in \cE(\al,\D)}   e^{\bve_\al(e)  }\right]
 \end{align}
 where $\bve_\al\in \BZ^\D$ is defined by $\bve_\al(e)=0$ if $|\al \cap e |>1$ or $\al$ passes $e$ in the unchanged pattern,
 $\bve_\al(e)=1$ if $\al$ passes $e$ in the right-left pattern, and $\bve_\al(e)=-1$ if $\al$ passes $e$ in the left-right pattern.
\end{lemma}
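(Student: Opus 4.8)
The plan is to reduce the asserted identity to a purely combinatorial statement about the face matrix, and then verify it one edge at a time. The starting point is the defining formula $\psi_\D(y^{\bk})=x^{\bk H}$ of \eqref{eq.44a} together with the relation $\bk H=\hat{\bk}\,Q$ recorded in the proof of Lemma \ref{r.even1}, where $\hat{\bk}\in\BZ^\D$ is the zero-extension of $\bk\in\BZ^{\oD}$ and $Q=Q_\D$. Since $\al$ is a knot it misses $\pS$, so $\bk_{\al,\D}$ vanishes on every boundary edge; hence its restriction to $\oD$ zero-extends to itself and $\psi_\D(y^{\bk_{\al,\D}})=x^{\bk_{\al,\D}Q}$ in $\XhalfD$. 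On the other side, in the inclusion $\XD\hookrightarrow\XhalfD$ given by $X_a=x_a^2$ one checks directly from the definition of the Weyl normalization (cf. Proposition \ref{r.11s}) that $X^{\bm}=x^{2\bm}$ for every $\bm\in\BZ^\D$, so in particular $X^{\bve_\al}=x^{2\bve_\al}$. Since distinct exponent vectors index distinct basis elements of $\XhalfD$, the whole claim is therefore equivalent to the identity
\[\bk_{\al,\D}\,Q \;=\; 2\,\bve_\al,\]
viewed in $\BZ^\D$, and I will check it componentwise.

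To compute $(\bk_{\al,\D}Q)(e)$ I would expand $Q=\sum_{\tau\in\cF(\D)}Q_\tau$ and apply Lemma \ref{r.Qtau0}: if a triangle $\tau$ containing $e$ has counterclockwise sides $a,b,c$ with $c=e$, then $(\bk_{\al,\D}Q_\tau)(e)=\mu(\al,b)-\mu(\al,a)$ (using $\D$-normality, $\mu(\al,\cdot)=|\al\cap\cdot|$). The only topological input needed is the standard fact that a $\D$-normal knot has no component of $\al\cap\tau$ joining a side of $\tau$ to itself; consequently, inside $\tau$, the components of $\al\cap\tau$ reaching $e$ run from $e$ to one of the other two sides $f,g$, and the remaining ones connect $f$ and $g$. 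Counting endpoints of arcs on $f$ and $g$ (each equal to $\mu(\al,f)$, resp. $\mu(\al,g)$) then shows that the contribution $\mu(\al,f)-\mu(\al,g)$ of $\tau$ to $(\bk_{\al,\D}Q)(e)$ equals $0$ if $\al$ does not meet $e$ inside $\tau$, and equals $+1$ or $-1$ according to which of $f,g$ the strand of $\al$ through a point of $\al\cap e$ runs to; this remains correct even when $f$ or $g$ happens to be the exceptional edge $d$ of the almost-$\D$-simple case.

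Summing these contributions over the one or two triangles containing $e$ finishes the proof. A boundary edge is contained in a single triangle and is missed by $\al$, so $(\bk_{\al,\D}Q)(e)=0=2\bve_\al(e)$; an inner edge with $|\al\cap e|=0$ likewise gives $0=2\bve_\al(e)$. For an inner edge with $|\al\cap e|=1$, whose two triangles form the $\cP$-quadrilateral of Figure \ref{fig:square6}, the two contributions are $+1$ and $-1$ in the two unchanged patterns, summing to $0=2\bve_\al(e)$, and are $+1,+1$ or $-1,-1$ in the right-left, resp. left-right patterns, summing to $\pm2=2\bve_\al(e)$. Finally, if $\al$ is almost $\D$-simple and $e=d$ is the unique edge with $|\al\cap d|=2$, then $\bve_\al(d)=0$; since every other edge is met at most once, the same arc-counting inside the two triangles sharing $d$ forces each of the four surrounding sides to be met exactly once and the two contributions to cancel, giving $(\bk_{\al,\D}Q)(d)=0=2\bve_\al(d)$. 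The only real work is bookkeeping: keeping the counterclockwise orientation of each triangle straight in Lemma \ref{r.Qtau0} so that the two per-triangle signs are correctly aligned with the patterns of Figure \ref{fig:square6}, and observing that the triangle-by-triangle computation is unaffected by the degenerate configurations in which two opposite sides of the quadrilateral coincide (only $b=d$ or $c=e$ of Figure \ref{fig:mutation} can occur, since $\cP\subset\pS$), because everything is organized triangle by triangle using global intersection numbers.
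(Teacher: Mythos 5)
Your argument is correct and takes essentially the same route as the paper's: reduce to the combinatorial identity $\bk_{\al,\D}\,Q=2\bve_\al$ (using $\psi_\D(y^\bk)=x^{\hat\bk Q}$ and $X^{\bve_\al}=x^{2\bve_\al}$), then verify it by expanding $Q=\sum_\tau Q_\tau$ and computing the contribution of each triangle, with the exceptional edge $d$ treated separately in the almost-$\D$-simple case. The only difference is bookkeeping: you compute $(\bk_{\al,\D}Q_\tau)(e)$ entrywise via Lemma \ref{r.Qtau0} and count arc endpoints on the two sides opposite $e$, whereas the paper decomposes $\bk_{\al,\D}$ into sums of delta vectors and applies \eqref{eq.Qtau5}; your organization is a bit more robust in the degenerate configurations where opposite sides of a $\cP$-quadrilateral coincide, since it works directly with $Q_\tau$ and never needs to equate $Q_\tau(a,c)$ with $Q(a,c)$.
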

\begin{proof} Denote $\bk=\bk_{\al,\D}$.
 For $a\in \D$ let $\bod_a \in \BZ^{\D}$ be defined by $\bod_a(e)= 1$ if $a=e$ and $\bod_a(e)=0$ otherwise.
 Note that $\bod_e Q_\tau\neq 0$ only if $e$ is an edge of $\tau$.

Suppose $a,c$ are edges of a triangle $\tau\in \cF(\D)$.  From the explicit formula \eqref{eq.Qtau} of $Q_\tau$, we have
\be
\lbl{eq.Qtau5}
(\bod_a + \bod_c) Q_\tau = Q_\tau(a,c) \bod_a + Q_\tau(c,a) \bod_c= Q(a,c) \bod_a + Q(c,a) \bod_c,
\ee
where the last equality holds since in a marked surface, a pair $a,c$ cannot be edges of two different triangles of $\D$.

First suppose $\al$ is $\D$-simple. Then $\bk = \sum_{a\in \cE(\al,\D)} \bod_a$. For $\tau\in \cF(\al,\D)$ let $a_\tau, c_\tau$ be its edges that intersect $\al$.
Using \eqref{eq.Qtau5}, one has
\be
\lbl{eq.84}
\bk Q_\tau =  (\bod_{a_\tau} + \bod_{c_\tau}) Q_\tau = Q(a_\tau,c_\tau) \bod_{a_\tau} + Q(c_\tau,a_\tau) \bod_{c_
 \tau}.
\ee
Hence,
\begin{align}
\bk Q & = \sum_{\tau} \bk Q_\tau = \sum_\tau Q(a_\tau,c_\tau) \bod_{a_\tau} + Q(c_\tau,a_\tau) \bod_{c_
 \tau}  \notag \\
 & = \sum_{e \in \cE(\al,\D)} (Q(e',e) + Q(e'',e)) \bod_e  \lbl{eq.Qtau1d}.
\end{align}
where $e',e''$ are edges in $\cE(\al,\D)$ neighboring to $e$, i.e. $e',e''$ are edges of triangles having $e$ as an edge, see Figure \ref{fig:square6}. By inspecting all  cases, we can check that $(Q(e',e) + Q(e'',e))/2 = \bve_\al(e)$. Hence, from \eqref{eq.Qtau1d} we get
$$ \psi_\D(y^{\bk})=  x^{\bk Q} = \left [ \prod_{e}   e^{\bve_\al(e)  }\right].$$

 Suppose now $\al$ is almost $\D$-simple. Let $d\in \cE(\al,\D)$ be the edge with $|\al \cap d|=2$, and $\tau_1, \tau_2$ be the triangles having $d$ as an edge. Then $\al$ must intersect $\tau_1 \cup \tau_2$ as in Figure \ref{fig:square7}. We also use the notations for edges neighboring to $d$ as in Figure \ref{fig:square7}, with $\tau_1$ having $d,c,v$ as edges.
 \FIGc{square7}{Edge $d$ with $|\al\cap d|=2$ and its neighboring edges $a,b,c,v$}{2cm}

 For all $\tau\in \cF(\al,\D)$ other than $\tau_1, \tau_2$, we still have \eqref{eq.84}. Using \eqref{eq.Qtau5}, we can calculate
 \begin{align*}
 \bk(Q_{\tau_1} + Q_{\tau_2})&= (\bod_a+\bod_b + \bod_c \bod_v +2\bod_d)(Q_{\tau_1} + Q_{\tau_2})\\
 &= (\bod_a+\bod_d) Q_{\tau_1} (\bod_v+\bod_d) Q_{\tau_1}   (\bod_b+\bod_d) Q_{\tau_2}   (\bod_c+\bod_d) Q_{\tau_2} \\
 &= Q(d,v) \bod_v + Q(d,a)  \bod_a+ Q(d,b) \bod_b + Q(d,c) \bod_c.
 \end{align*}
Note that there is no $\bod_d$ in the final expression. From here one get
$$ \bk Q= \sum_{e\in \cE(\al,\D), e \neq d} \bve_\al(e) \bod_e.$$
Using $\psi_\D(y^{\bk})=  x^{\bk Q}$, we get \eqref{eq.psi3} for almost $\D$-simple knots.
\end{proof}

\subsection{The case of $\tY^\ev(\D)$} 
\begin{lemma}\lbl{r.tpsi} Suppose $\D'$ is obtained from $\D$ by the flip at $a\in \oD$, and
 $\al$ is  $\D'$-simple knot. Then
 \be
\lbl{eq.psi0} \tpsi_{\D'} (y^{\bk_{\al,\D'}})= \tpsi_{\D} (y^{\bk_{\al,\D}})
 \ee
 except when $a^*\in \cE(\al,\D')$ and $\al$ passes $a^*$ by the  left-right or  right-left pattern. One has
\begin{align}
\lbl{eq.psi6} \tpsi_{\D'} (y^{\bk_{\al,\D'}})&= \tpsi_{\D} (y^{\bk_{\al,\D}} + [Y_a^{-1} y^{\bk_{\al,\D}}]) \quad
 \text{if $\al$ passes $a^*$ by right-left pattern}\\
\lbl{eq.psi7} \tpsi_{\D'} (y^{-\bk_{\al,\D'}})&= \tpsi_{\D} (y^{-\bk_{\al,\D}} + [Y_a y^{-\bk_{\al,\D}}]) \quad \text{if $\al$ passes $a^*$ by left-right pattern }.
\end{align}

\end{lemma}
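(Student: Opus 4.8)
The plan is to pass to the concrete picture in which $\cS$ is identified with a subalgebra of $\XD$ via $\varphi_\D$, so that $\tpsi_\D$ restricts to $\psi_\D:\cY(\D)\to\XhalfD$ and, under the induced identification $\ttS=\tiX(\D)$, one has $(\tvarphi_{\D'})^{-1}=\Phi_{\D,\D'}$ by Proposition \ref{r.42}. Thus $\tpsi_{\D'}(y^{\bk_{\al,\D'}})=\Phi_{\D,\D'}\bigl(\psi_{\D'}(y^{\bk_{\al,\D'}})\bigr)$, where $\Phi_{\D,\D'}$ fixes every $X_v$ with $v\in\D\setminus\{a^*\}$ and, by \eqref{eq.aa1} (equivalently by the skein identity \eqref{eq.aaa}, which reads $a^*=[cea^{-1}]+[bda^{-1}]$ in $\ttS$), sends $X_{a^*}$ to $[X_cX_eX_a^{-1}]+[X_bX_dX_a^{-1}]$; here $a,b,c,d,e$ are the edges of the flip quadrilateral $\cQ_{a,\D}$ labelled as in Figure \ref{fig:mutation}. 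Since $\al$ is $\D'$-simple, Lemma \ref{r.psi3} gives $\psi_{\D'}(y^{\bk_{\al,\D'}})=X^{\bve'}$, a normalized monomial in the edges of $\D'$ whose exponent $\bve'$ is read off the four local patterns of Figure \ref{fig:square6}.

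The geometric input I would record is the following. Inside $\cQ_{a,\D}$ the knot $\al$, in $\D'$-normal position, meets each of the four boundary edges at most once, so $\al\cap\cQ_{a,\D}$ consists of at most two arcs; a taut arc in a quadrilateral meets a diagonal at most once, so in $\D$-normal position $\al$ is either $\D$-simple or almost $\D$-simple with $\mu(\al,a)=2$, and Lemma \ref{r.psi3} again applies, giving $\psi_\D(y^{\bk_{\al,\D}})=X^{\bve}$ for the corresponding pattern vector $\bve$. Inspecting Figure \ref{fig:square6} across the flip one sees: if $a^*\notin\cE(\al,\D')$ or $\al$ passes $a^*$ in an unchanged pattern (so $\bve'(a^*)=0$), then the local patterns of $\al$ at every shared edge coincide in $\D$ and $\D'$, whence $\bve'=\bve$ as vectors supported off $\{a,a^*\}$; and if $\al$ passes $a^*$ in a left-right or right-left pattern, then $\bve(a)=0$ (the arc of $\al$ crossing $a^*$ crosses $a$ not at all, and an arc of $\al$ crossing $a$ in a bent pattern would, by the arc count above, prevent $\al$ from meeting $a^*$). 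In the first case $\Phi_{\D,\D'}$ fixes $X^{\bve'}$, which equals $\psi_\D(y^{\bk_{\al,\D}})$, proving \eqref{eq.psi0}.

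In the exceptional cases I would write $X^{\bve'}=[X_{a^*}^{m}Z]$ with $m=\bve'(a^*)=\pm1$ and $Z$ the normalized monomial in the shared remaining edges, so that
$$\tpsi_{\D'}(y^{\bk_{\al,\D'}})=q^{\lambda}\bigl([X_cX_eX_a^{-1}]+[X_bX_dX_a^{-1}]\bigr)^{m}\,\Phi_{\D,\D'}(Z),\qquad\lambda\in\BQ ,$$
then expand the binomial and renormalize each of the two terms by Proposition \ref{r.11s}. Using the shear formula \eqref{eq.shear}, which gives $\psi_\D(Y_a)=[X_bX_c^{-1}X_dX_e^{-1}]$, together with $\bve(a)=0$, one term matches $X^{\bve}=\psi_\D(y^{\bk_{\al,\D}})$ and the other matches $\psi_\D([Y_a^{-1}y^{\bk_{\al,\D}}])$ in the right-left case (and, after replacing $\bk_{\al,\D'}$ by $-\bk_{\al,\D'}$, the terms $\psi_\D(y^{-\bk_{\al,\D}})$ and $\psi_\D([Y_ay^{-\bk_{\al,\D}}])$ in the left-right case). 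Finally, since all the maps involved are reflection invariant (Proposition \ref{r.homo}, Theorem \ref{r.Muller}) and the elements on the right-hand sides of \eqref{eq.psi6} and \eqref{eq.psi7} are fixed by $\chi$, both sides of each identity are reflection invariant, so Lemma \ref{r.reflection} forces all the ambient powers of $q$ to vanish, yielding \eqref{eq.psi6} and \eqref{eq.psi7}.

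The step I expect to be the main obstacle is the geometric bookkeeping inside $\cQ_{a,\D}$: verifying the $\D$-simple/almost $\D$-simple dichotomy, matching the four patterns of Figure \ref{fig:square6} across the flip (in particular pinning down when $\bve'(a^*)\neq0$ and checking that $\bve(a)=0$ there), after which the remaining work is a finite Weyl-normalization computation in which the $q$-powers are controlled by reflection invariance rather than tracked by hand.
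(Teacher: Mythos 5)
Your overall strategy is the same as the paper's: identify $\ttS$ with $\tXD$ via $\varphi_\D$, pass $\psi_{\D'}(y^{\bk_{\al,\D'}})=X^{\bve'}$ through $\Phi_{\D,\D'}$, expand the flip relation \eqref{eq.aa1}, and match monomials. Using reflection invariance to kill ambient $q$-powers (instead of the paper's ``simple commutation calculation'') is a nice shortcut, and your dichotomy that $\al$ is $\D$-simple or almost $\D$-simple is correct.

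However, there is a concrete error in the exceptional-case bookkeeping. You assert that if $\al$ passes $a^*$ by a bent (left-right or right-left) pattern then $\bve(a)=0$, supported by the claim that the arc of $\al$ crossing $a^*$ does not cross $a$. This is false, and it has the geometry backwards: the arcs crossing $a^*$ in a bent pattern are precisely the arcs joining two \emph{opposite} boundary edges of the flip quadrilateral, and those arcs cross $a$ as well, in a bent pattern of the opposite sign, so $\bve(a)=-\bve'(a^*)=\mp1\neq 0$. (The arcs crossing $a^*$ that miss $a$ are the ones joining two \emph{adjacent} boundary edges, and those give the \emph{unchanged} pattern $\bve'(a^*)=0$, which is your non-exceptional case.) This is also visible directly in the paper's own computation: equation \eqref{eq.psi5} reads $\tpsi_\D(y^{\bk})=[\,a^{-1}b^{1/2}d^{1/2}x^{\bk Q_2}\,]$, so the $X_a$-exponent of $X^{\bve}$ is $-1$, not $0$.

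This matters for your final matching step. After applying $\Phi_{\D,\D'}$ and expanding, \emph{both} terms $[X_cX_eX_a^{-1}Z]$ and $[X_bX_dX_a^{-1}Z]$ carry a factor $X_a^{-1}$ (since $Z$ involves only shared edges), so neither can equal $X^{\bve}$ if $\bve(a)=0$; the claimed identification of one term with $\psi_\D(y^{\bk_{\al,\D}})$ cannot close. With the correct value $\bve(a)=-1$ (and, on the two quadrilateral edges on the ``$bd$-side'' of the flip, $\bve=\bve'+1$; equal on the remaining shared edges), the exponent matching does go through, and then Lemma \ref{r.reflection} finishes the job as you outlined. In fact your own expansion forces $\bve(a)=-1$, since every term produced by $\Phi_{\D,\D'}(X_{a^*})$ contains $X_a^{-1}$ -- a sign that the geometric claim should have been re-examined. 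So the gap is fixable, but as written the exceptional-case argument is wrong, and the parenthetical justification (``the arc of $\al$ crossing $a^*$ crosses $a$ not at all'') is exactly reversed.
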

\begin{proof}  After an isotopy we can assume that $\al$ is $\D'$-normal.
There are 2 cases: $a^* \in  \cE(\al,\D')$ and $a^* \not\in  \cE(\al,\D')$.

(i) Case $a^* \not \in \cE(\al,\D')$. Subcase (ia) $a\not \in \cE(\al,\D)$. Then $\cE(\al,\D)= \cE(\al,\D')$, and $\tpsi_{\D'} (y^{\bk_{\al,\D'}})= \tpsi_{\D} (y^{\bk_{\al,\D}})$ since both are equal to the right hand side of
 \eqref{eq.psi3}.

 Subcase (ib) $a\in \cE(\al,\D)$. Then $\al$ intersects $S$ like in Figure \ref{fig:square2}(a), where $\al$ is $\D$-simple,  or Figure \ref{fig:square2}(b), where $\al$ is almost $\D$-simple.
 In each case, we have \eqref{eq.psi0} due to Lemma \ref{r.psi3}.

 \FIGc{square2}{Intersection of $\al$ with $\cQ_{\al,\D'}$}{2.5cm}

(ii) $a^* \in  \cE(\al,\D')$. Then $\al$ intersects $\cQ_{\al,\D'}$ in one of the four patterns described in Figure \ref{fig:square6}. In the first two cases, identity \eqref{eq.psi0} is proved already in subcase (ib) above, by switching $a\leftrightarrow a^*$.

Suppose $\al$ passes $a^*$ in the right-left pattern, with edge notations as in Figure \ref{fig:square5}.
\FIGc{square5}{Left: $\al$ passes $a^*$ by right-left pattern. Right: the flip.}{2.5cm}

Denote $Q=Q_\D,Q'=Q_{\D'}$, $\bk=\bk_{\al,\D}$, and $ \bk'=\bk_{\al,\D'}$. Let $S= \cQ_{a,\D}=\cQ_{a^*,\D'}$ which is the support of the flip and is bounded by the 4 edges $b,c,d,e$. One might have $b=d$ or $c=d$.
Let $\cF_1$ (resp. $\cF'_1$) be the two triangles of $\D$ (resp. $\D'$) in $S$, and $\cF_2= \cF(\al,\D) \setminus \cF_1$, $\cF_2'= \cF(\al,\D') \setminus \cF'_1$.
Define
$$Q_1= \sum_{\tau \in \cF_1} Q_ \tau, \quad Q_1'= \sum_{\tau \in \cF'_1} Q_ \tau, \quad Q_2= \sum_{\tau \in \cF_2} Q_ \tau, \quad  Q'_2= \sum_{\tau \in \cF'_2} Q_ \tau.$$
Using \eqref{eq.Qtau5}, we get
$$ \bk' Q'_1= 2 \bod_{a^*} -\bod_b -\bod_d, \quad  \bk Q_1= -2 \bod_{a^*} +\bod_b +\bod_d,$$
which, together with $Q= Q_1 + Q_2$ and $Q'= Q'_1+ Q_2$, gives
\begin{align}
\lbl{eq.psi4}\tpsi_{\D'} (y^{\bk'})= x^{\bk' Q'}= x^{\bk' Q'_1 + \bk' Q'_2 } & = \left [ a^* b^{-1/2} d^{-1/2} x ^{\bk' Q'_2} \right ] \\
\lbl{eq.psi5}\tpsi_{\D} (y^{\bk})& = \left [ a^{-1} b^{1/2} d^{1/2} x ^{\bk Q_2}  \right ].
\end{align}
Since $\bk(e)= \bk'(e)$ for $e\not \in \{a, a^*\}$, we have $x ^{\bk Q_2} = x ^{\bk' Q'_2}$ as elements in $\XhalfD$.
Using $a^* = [bda^{-1}] + [ce a^{-1}]$ (see \eqref{eq.aa1}) in \eqref{eq.psi4} and a simple commutation calculation, we have
$$ \tpsi_{\D'} (y^{\bk_{\al,\D'}})= \left [ a^{-1} b^{1/2} d^{1/2} x ^{\bk Q_2} \right ] + \left [ (b^{-1} c d^{-1} e) (a^{-1} b^{1/2} d^{1/2} x ^{\bk Q_2} )  \right ] = \tpsi_{\D} (y^{\bk_{\al,\D}}) + \tpsi_{\D} (Y^{-1}_ay^{\bk_{\al,\D}}),$$
where the last equality follows from \eqref{eq.psi5}. This proves  \eqref{eq.psi6}. The other \eqref{eq.psi7} is proved similarly.
This completes the proof of the lemma.
\end{proof}

\begin{proof}[Proof of Theorem \ref{r.shearchange}]
(a) Lemma \ref{r.tpsi}  and Proposition \ref{r.Ytwo} show that if $\D'$ is obtained by a flip at $a\in \oD$, then $\tpsi_{\D'} (\tY^\ev(\D')) \subset \tpsi_{\D} (\tY^\ev(\D))$.
 Switching $\D \leftrightarrow \D'$ we get the reverse inclusion, and hence $\tpsi_{\D'} (\tY^\ev(\D')) = \tpsi_{\D} (\tY^\ev(\D))$. Since any two triangulations are related by a sequence of flips, we have $\tpsi_{\D'} (\tY^\ev(\D')) = \tpsi_{\D} (\tY^\ev(\D))$ for any two triangulations $\D,\D'$. The fact that $\tpsi_{\D'} (\tY^{(2)}(\D')) = \tpsi_{\D} (\tY^{(2)}(\D))$ was proved in Proposition \ref{r.Ytwo}. This proves part (a).

(b) Again the statement is reduced to the case when $\D'$ is obtained by a flip at $a\in \oD$.
The fact that $\Theta_{\D\D'}$ on $\tY^{(2)}(\D')$ coincides with the coordinate change map of \cite{Liu} was proved in Lemma~\ref{r.a3}.
Suppose $\al$ is a $\D'$-simple knot. From Lemma \ref{r.tpsi}, we have
$$ \Theta_{\D\D'} (y^{\bk_{\al,\D'}})= y^{\bk_{\al,\D}}$$
unless when $\al$ passes $a^*$ in the right-left or left-right patterns, and in those cases
\begin{align*}
\Theta_{\D\D'} (y^{\bk_{\al,\D'}}) & = y^{\bk_{\al,\D'}} + y_a^{-2} y^{\bk_{\al,\D}} \quad \text{if $\al$ passes $a^*$ in right-left pattern}\\
\Theta_{\D\D'} \left (y^{-\bk_{\al,\D'}}\right) & = y^{-\bk_{\al,\D'}} + y_a^{2}\, y^{-\bk_{\al,\D}} \quad \text{if $\al$ passes $a^*$ in left-right pattern}.
\end{align*}
Comparing with the formulas in \cite{Hiatt}, we see that our $\Theta_{\D\D'}$ and $\Phi_{\D\D'}$ of \cite{Hiatt} agree on $y^{\bk_{\al,\D'}}$. Since $\tY^\ev(\D')$ is generated by $\tY^{(2)}(\D')$ and $y^{\bk_{\al,\D'}}$, we conclude that our $\Theta_{\D\D'}$ and $\Phi_{\D\D'}$ of \cite{Hiatt} coincide.

(c) is Proposition \ref{r.Ytwo}. This completes the proof of Theorem \ref{r.shearchange}.
\end{proof}

\def\ot{\otimes}
\def\d{\delta}


\end{document}